\DeclareMathOperator{\supp}{supp}
\DeclareMathOperator{\Chi}{Chi}
\DeclareMathOperator{\Gen}{Gen}
\DeclareMathOperator{\Par}{par}
\DeclareMathOperator{\Orb}{Orb}
\newtheorem{theorem}{Theorem}[section]
\newtheorem{lemma}[theorem]{Lemma}
\newtheorem{proposition}[theorem]{Proposition}
\newtheorem{corollary}[theorem]{Corollary}
\theoremstyle{definition}
\newtheorem{example}[theorem]{Example}
\newtheorem{question}{Question}
\newcommand{\dis}{\displaystyle}
\newcommand{\veps}{\varepsilon}
\def\root{\texttt{r}}
\def\RR{\mathbb R}
\def\NN{\mathbb N}
\def\ZZ{\mathbb Z}
\def\CC{\mathbb C}
\def\KK{\mathbb K}
\def\A{\mathcal A}
\title{Hypercyclic algebras for weighted shifts on trees}
\author{Arafat Abbar}
\author{Fernando Costa Jr.}
\date{\today}
\address[A. Abbar]{Univ Gustave Eiffel, Univ Paris Est Creteil, CNRS, LAMA UMR8050 F-77447 Marne-la-Vallée, France}
\email{abbar.arafat@gmail.com}
\address[F. Costa Jr.]{Universidade Federal da Paraíba - Campus I, Departamento de Matemática, Jardim Universitário, s/n, Bairro Castelo Branco, CEP 58051-900, João Pessoa, Brazil}
\email{fernando@mat.ufpb.br}
\begin{document}

\subjclass[2020]{47A16}
\keywords{Backward Shifts, Directed trees,  Hypercyclic operators, Hypercyclic algebras.}

\begin{abstract}
We study the existence of algebras of hypercyclic vectors for weighted backward shifts on sequence spaces of directed trees with the coordinatewise product. When $V$ is a rooted directed tree, we show the set of hypercyclic vectors of any backward weighted shift operator on the space $c_0(V)$ or $\ell^1(V)$ is algebrable whenever it is not empty. We provide necessary and sufficient conditions for the existence of these structures on $\ell^p(V), 1<p<+\infty$. Examples of hypercyclic operators not having a hypercyclic algebra are found. We also study the existence of mixing and non-mixing backward weighted shift operators on any rooted directed tree, with or without hypercyclic algebras. The case of unrooted trees is also studied.
\end{abstract}

\maketitle

\section{Introduction}
In the field of Linear Dynamics, one studies various dynamical properties satisfied by a linear map $T:X\to X$ acting on a linear space $X$. One of the main notion in the theory is that of \emph{hypercyclicity}, that is, the presence in $X$ of a dense orbit under the action of $T$. Let $X$ be a complex separable Banach space and $T:X\to X$ be a bounded linear operator. We say that $T$ is \emph{hypercyclic} when there is a vector $x\in X$ whose orbit $\Orb(x,T):=\{T^nx:\, n\in\NN_0\}$ is dense in $X$, where $\NN_0:=\NN\cup\{0\}$. Such a vector is called a \emph{hypercyclic vector} for the operator $T$, which in this case is said to be a \emph{hypercyclic operator}. The set of hypercyclic vectors for an operator $T$ will be denoted by $HC(T)$. Due to the importance of this topic, it is not a huge surprise that there is a big interest in the study of the structure of the set of hypercyclic vectors for a given hypercyclic operator. It is known from \cite[Theorem 9.20]{GottHedl} that $HC(T)$ is always either empty or residual, and that in some cases it can be spaceable (as shown by Bernal-Gonz\'alez and Montes-Rodr\'iguez in \cite{bernal1995}) and even algebrable (as shown by B\`es and Papathanasiou in \cite{bes2020algebrable}).

One of the most studied classes of operators in Linear Dynamics is that of backward shifts. This is of course not a surprise: shift operators are simple to define; they are highly flexible for examples and counter-examples; their orbits are easy to calculate; they can present the most wild behavior; in many cases, other classes of operators can be identified as shifts in some sense. Thus, it is natural to study the aforementioned properties in this context, what has been done on rather general Fréchet sequence spaces. In particular, when it comes to weighted backward shifts operators $B_w$ acting on spaces like $c_0(\NN)$ or $\ell^p(\NN), 1\leq p<+\infty$, (or even $c_0(\ZZ)$ and $\ell^p(\ZZ)$), their hypercyclicity and the spaceability of $HC(B_w)$ are completely characterized (see \cite{salas}, \cite[Section 4.1]{karlbook} and \cite{menet2014subspaces}). The search for algebras of hypercyclic vector is also of great interest in the literature (see for example \cite{bes2018algebra}, \cite{bayart2019algebra} and \cite{besernst2020algebra}). In particular, sufficient conditions for the algebrability of $HC(B_w)$ were obtained in \cite{karl-falco} and \cite{BCP}. These last results deal with ``classical'' sequences spaces. In the recent work \cite{Karl1} by K.-G. Grosse-Erdmann and D. Papathanasiou, among other things, they characterize hypercyclic and mixing weighted backward shifts acting on sequence spaces of a directed tree. Continuing this trend, in this paper we aim to study hypercyclic algebras in this context. More generally, we are interested in the algebrability of $HC(T)$ when $T$ is a weighted backward shift acting on the sequence space of a tree, which is a Banach algebra when endowed with the coordinatewise product.

In Section \ref{sec:prel}, we present the main definitions and key results that will be used throughout this work. Section \ref{sec:rooted} addresses the case of rooted directed trees: we characterize weighted backward shift operators on $\ell^p$-spaces, $1 \leq p < +\infty$, and $c_0$-spaces of directed trees that have hypercyclic algebras, and we provide some examples and counterexamples in the case of $\ell^p$-spaces with $1<p<+\infty$. In Section \ref{sec:unrooted}, we turn our attention to the unrooted case, offering sufficient conditions for weighted backward shifts to support hypercyclic algebras. Section \ref{sec:existence} 
then explores the existence of backward shifts that support a hypercyclic algebra on a given $\ell^p$-space, $1 \leq p < +\infty$, or $c_0$-space of a tree. Finally, in Section \ref{sec:conc}, we discuss some challenges, pose open problems, and provide perspectives for future research on this topic.

\section{Preliminaries}\label{sec:prel}
\subsection{Weighted shifts operators on trees}

A directed tree is a connected directed graph $(A,E)$, where $A$ is a countable set whose elements are called \textit{vertices}, and $E\subset\{(a,b)\in A\times A:\, a\neq b\}$  represents the set of \textit{edges}. This graph possesses the following properties: it has no cycles, meaning there are no paths that lead back to a vertex already visited, and each vertex $b \in A$ has at most one parent. The parent of a vertex $b \in A$, denoted by $\Par(b)$, is a vertex $a \in A$ for which $(a,b) \in E$. More generally, $\Par^k(b):=\Par\big(\Par^{k-1}(b)\big)$ for every integer $k\geq2$. There is at most one vertex without a parent, called \textit{root}, and denoted simply by $\root$. Therefore, a rooted tree is a tree that has a root. For more details, we refer to \cite{Karl1,Jablonski, serre2002trees} and the references therein.

The parent-child relationship can also be expressed as $b$ being a child of $a$ whenever $a$ is the parent of $b$. The set $\Chi(a)$ stands for the set of children of a vertex $a$. We do not make any restrictions on the number of children a vertex can have (notice that they are at most countable by definition). However, we consider only leafless trees, meaning every vertex must have at least one child. This assumption is necessary for the existence of hypercyclic shifts operators, see \cite[Remark 4.1]{Karl1}.

Weighted backward shift operators are defined on $\KK^A$ (the space of maps from $A$ into $\KK$, where $\KK$ is either the real field $\RR$ or the complex field $\CC$) by, for any $f\in\KK^A$ and $v\in A$,
 \[(B_{\lambda}f)(v)=\sum_{u\in\Chi(v)}\lambda_u f(u),\]
where $\lambda=(\lambda_u)_{u\in A}$ is a family of non-zero scalars called \textit{weight}.

For every integer $n\geq1$, the $n$-th iteration of a weighted backward shift on a directed tree can be computed as
\[(B_{\lambda}^{n}f)(v)=\sum_{u\in\Chi^n(v)}\lambda(v\to u)f(u),\]
where $\lambda(v\to u):=\prod_{k=0}^{n-1}\lambda_{\Par^{k}(u)}$, $\Chi^0(v):=\{v\}$ and 
\[\Chi^n(v):=\bigcup_{u\in \Chi^{n-1}(v)}\Chi(u).\]
For any $1\leq p<+\infty$, we denote by $\ell^p(A)$ the space of $p$-summable families indexed by $A$, that is, 
\[\ell^p(A)=\Big\{f\in\KK^A:\, \|f\|_p:=\Big(\sum_{v\in A}|f(v)|^p\Big)^{1/p}<\infty\Big\}.\]
We also consider the $c_0$-space of $A$, which is defined by
\[c_0(A):=\Big\{f\in\KK^A:\, \forall\varepsilon>0, \exists F\subset A \text{ finite}, \forall v\in A\setminus F , \, |f(v)|<\varepsilon \Big\}\]
and is equipped with the sup norm. The canonical basis $(e_v)_{v\in A}$ is defined, for each $v\in A$, by $e_v=\chi_{\{v\}}$. The support of a function $f\in \KK^A$ is defined by $\supp(f)=\{v\in A : f(v)\neq 0\}$. We say that \emph{$f\in \KK^A$ has finite support in $F$} when $\supp(f)\subset F$ and $F\subset A$ is finite. In this case, we can write $f=\sum_{v\in F}f(v)e_v$. Note that the space of functions in $\KK^A$ with finite support is dense in $\ell^p(A)$, $1\leq p<+\infty$, as well as in $c_0(A)$. 

In particular for $c_0$-spaces, the following statement from \cite[Lemma 4.2]{Karl1} will be quite useful for constructing right inverses for backward shifts. Let $J$ be a countable set and let $\mu=(\mu_j)_j\in (\mathbb K\backslash \{0\})^J$. Then
\begin{equation}\label{key-lemma}
    \inf_{\|x\|_1=1}\sup_{j\in J}|x_j\mu_j|=\bigg(\sum_{j\in J}\frac{1}{|\mu_j|}\bigg)^{-1}, \quad \text{ where } \infty^{-1}:=0.
\end{equation}
The same holds when the sequences $x$ are required, in addition, to be of finite support.

Weighted forward shifts on directed trees were introduced and studied by Jabloński, Jung, and Stochel in \cite{Jablonski}. Martínez-Avendaño initiated the study of hypercyclicity of forward and backward shifts on weighted $\ell^p$-spaces on directed trees \cite{martinez2017}. Complete characterizations for hypercyclicity, weak mixing, and mixing properties of weighted backward shifts on $c_0$-spaces and $\ell^p$-spaces on directed trees were obtained by  Grosse-Erdmann and Papathanasiou in \cite{Karl1}. They also investigated the chaoticity of these operators in \cite{Karl2}. In \cite{aba2024sev},   Abakumov and the first author characterized $\mathcal{F}$-transitivity and topological $\mathcal{F}$-recurrence of these operators, where $\mathcal{F}$ is a Furstenberg family of subsets of the set of non-negative integers. For other related works on this topic, see \cite{baranov, lopez, rivera2019}.

The following proposition summarizes the boundedness of weighted Backward shift operators on  $c_0$-spaces and $\ell^p$-spaces on  directed trees, as shown in \cite[Proposition 2.3]{Karl1}.

\begin{proposition}\label{boundedness}
    Let $A$ be a directed tree and $X=\ell^p(A), 1\leq p<\infty$, or $X=c_0(A)$. The following conditions characterize the weights $\lambda=(\lambda_v)_{v\in A}$ for which $B_\lambda$ is bounded.
    \begin{enumerate}
        \item For $X=\ell^1(A)$, the condition is $\|B_\lambda\|:=\sup_{v\in A}\, |\lambda_v|<\infty$.
        \item For $X=\ell^p(A), 1<p<\infty$, the condition is $\|B_\lambda\|:=\sup_{v\in A}\Bigl(\sum_{u\in\Chi(v)}|\lambda_u|^{p^*}\Bigr)^{\frac{1}{p^*}}\!<\infty$, where $p^*$ is the conjugate exponent of $p$.
        \item For $X=c_0(A)$, the condition is $\|B_\lambda\|:=\sup_{v\in A}\Bigl(\sum_{u\in\Chi(v)}|\lambda_u|\Bigr)<\infty$.
    \end{enumerate}
\end{proposition}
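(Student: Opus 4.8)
The plan is to compute the operator norm of $B_\lambda$ exactly in each of the three cases, since producing the precise value of $\|B_\lambda\|$ simultaneously settles sufficiency (the displayed quantity being finite forces $B_\lambda$ to be bounded) and necessity ($B_\lambda$ bounded forces the quantity to be finite). The one structural fact I would isolate at the outset is the reindexing identity coming from the tree structure: since every vertex other than the root has exactly one parent, a double sum over $v\in A$ and $u\in\Chi(v)$ collapses to a single sum, i.e. for any nonnegative $g$,
\[\sum_{v\in A}\sum_{u\in\Chi(v)}g(u)=\sum_{u\neq\root}g(u)\leq\sum_{u\in A}g(u).\]
Everything else reduces to a standard scalar inequality applied on each fibre $\Chi(v)$, followed by this collapse.

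For the upper bounds I would argue as follows. On $\ell^1(A)$, the triangle inequality gives $|(B_\lambda f)(v)|\leq\sum_{u\in\Chi(v)}|\lambda_u||f(u)|$; summing over $v$ and applying the collapse yields $\|B_\lambda f\|_1\leq(\sup_u|\lambda_u|)\|f\|_1$. On $\ell^p(A)$ with $1<p<\infty$, Hölder's inequality on each $\Chi(v)$ with conjugate exponents $p^*,p$ bounds $|(B_\lambda f)(v)|^p$ by $\big(\sum_{u\in\Chi(v)}|\lambda_u|^{p^*}\big)^{p/p^*}\sum_{u\in\Chi(v)}|f(u)|^p$; bounding the first factor by its supremum over $v$ and collapsing the second gives the stated constant. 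On $c_0(A)$, the triangle inequality directly yields $\|B_\lambda f\|_\infty\leq\big(\sup_v\sum_{u\in\Chi(v)}|\lambda_u|\big)\|f\|_\infty$. In the $c_0$ case I would additionally check that $B_\lambda$ maps into $c_0(A)$ and not merely $\ell^\infty(A)$: this is immediate for finitely supported $f$, whose image is again finitely supported (on the parents of $\supp f$), and it extends to all of $c_0(A)$ by density and boundedness, since $c_0(A)$ is closed.

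For the matching lower bounds I would test $B_\lambda$ on vectors concentrated on a single fibre $\Chi(v)$, using that if $\supp f\subset\Chi(v)$ then $B_\lambda f=\big(\sum_{u\in\Chi(v)}\lambda_u f(u)\big)e_v$ (again because parents are unique), so that $\|B_\lambda f\|$ reduces to a single scalar. Taking $f=e_u$ recovers $\|B_\lambda\|\geq|\lambda_u|$ in the $\ell^1$ case; choosing unimodular phases $f(u)=\overline{\lambda_u}/|\lambda_u|$ recovers $\|B_\lambda\|\geq\sum_{u\in\Chi(v)}|\lambda_u|$ in the $c_0$ case; and choosing the Hölder extremizer $f(u)=\overline{\lambda_u}|\lambda_u|^{p^*-2}$, for which $|f(u)|^p=|\lambda_u|^{p^*}$ thanks to the identity $(p^*-1)p=p^*$, saturates the inequality in the $\ell^p$ case and gives $\|B_\lambda\|\geq\big(\sum_{u\in\Chi(v)}|\lambda_u|^{p^*}\big)^{1/p^*}$. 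Taking the supremum over $v$ produces the reverse inequalities, so the displayed quantities equal $\|B_\lambda\|$ in all three cases. The one point requiring care, which I expect to be the main technical obstacle, is that $\Chi(v)$ may be infinite, in which case these extremizers need not lie in the space; I would handle this by truncating to finite subsets of $\Chi(v)$ and then passing to the supremum, which is harmless since finitely supported functions are dense in each of the spaces considered.
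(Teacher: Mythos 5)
Your proof is correct, and there is nothing in the paper to compare it against: Proposition \ref{boundedness} is stated without proof, being quoted directly from \cite[Proposition 2.3]{Karl1}. Your argument --- fibre-wise H\"older/duality estimates on each $\Chi(v)$ combined with the unique-parent collapse of the double sum, finite truncation of the extremizers when a fibre $\Chi(v)$ is infinite, and the density check that the image of $c_0(A)$ lands back in $c_0(A)$ --- is the standard one and is essentially how the result is established in that reference, so no further comparison is needed.
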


For the convenience of the reader, we will recall the following characterization of hypercyclicity for weighted Backward shift operators on rooted directed trees, obtained by K.-G. Grosse-Erdmann and D. Papathanasiou. 
\begin{theorem}[\protect{\cite[Theorem 4.4]{Karl1}}]
    Let $A$ be a rooted directed tree and $\lambda=(\lambda_v)_{v\in A}$ be a weight. Let $X = \ell^p(A), 1 \leq p <+\infty$, or $X = c_0(A)$, and suppose that the weighted backward shift $B_\lambda$ is a bounded operator on $X$. Then $B_\lambda$ is hypercyclic if and only if there is an increasing sequence of positive integers $(n_k)_{k\geq1}$ such that, for every $v\in A$,
        \begin{align*}
            \sup_{u\in \Chi^{n_k}(v)}\big|\lambda(v\to u)\big|\xrightarrow{k\to+\infty}+\infty & \text{ if } X=\ell^1(A);\\
            \sum_{u\in \Chi^{n_k}(v)}\big|\lambda(v\to u)\big|^{p^*} \xrightarrow{k\to+\infty}+\infty & \text{ if } X=\ell^p(A), 1<p<\infty;\\
            \sum_{u\in \Chi^{n_k}(v)}\big|\lambda(v\to u)\big|\xrightarrow{k\to+\infty}+\infty  & \text{ if } X=c_0(A).
        \end{align*}
    \label{carac-c0} \label{carac-l1} \label{carac-lp}
\end{theorem}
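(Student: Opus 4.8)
The statement being an equivalence, the plan is to treat the two implications separately, reducing the sufficiency to the classical Hypercyclicity Criterion and establishing the necessity by a direct blow-up argument. Throughout, the natural dense set is the space $X_0$ of finitely supported functions, dense in each space by the remark following the definition of $c_0(A)$. Since the tree is rooted, every $v\in A$ lies at a finite distance from $\root$, so for finitely supported $f$ the iterate $B_\lambda^n f$ is supported on vertices whose distance to $\root$ has dropped by $n$; hence $B_\lambda^n f=0$ once $n$ exceeds the maximal generation in $\supp f$. In particular $B_\lambda^n f\to 0$ on $X_0$ automatically, and the entire content of the Criterion is concentrated in the construction of right inverses.

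For sufficiency, fix the sequence $(n_k)$ provided by the hypothesis. For each basis vector $e_v$ and each $n$, I would look for $g$ with $\supp g\subseteq\Chi^n(v)$ solving $B_\lambda^n g=e_v$. The key structural point is that, in a tree, each $u\in\Chi^n(v)$ satisfies $\Par^n(u)=v$, so $u\in\Chi^n(w)$ forces $w=v$; consequently any $g$ supported on $\Chi^n(v)$ yields $B_\lambda^n g=\big(\sum_{u\in\Chi^n(v)}\lambda(v\to u)g(u)\big)e_v$, and the vector equation collapses to the single scalar constraint $\sum_{u\in\Chi^n(v)}\lambda(v\to u)g(u)=1$. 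The task is then to minimize $\|g\|$ subject to this linear constraint, a routine duality computation: in $\ell^1(A)$ the minimal norm is $\big(\sup_{u}|\lambda(v\to u)|\big)^{-1}$; in $\ell^p(A)$, $1<p<\infty$, Hölder gives $\big(\sum_u|\lambda(v\to u)|^{p^*}\big)^{-1/p^*}$; and in $c_0(A)$ it is $\big(\sum_u|\lambda(v\to u)|\big)^{-1}$, where the attainment by a \emph{finitely supported} $g\in c_0(A)$ is exactly what the key lemma \eqref{key-lemma} guarantees (after the substitution $x_u=\lambda(v\to u)g(u)$, $\mu_u=\lambda(v\to u)^{-1}$). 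In all three cases the minimal norm along $(n_k)$ tends to $0$ precisely under the stated divergence hypothesis, so setting $S_{n_k}e_v:=g$ defines right inverses with $B_\lambda^{n_k}S_{n_k}=\mathrm{id}$ on $X_0$ and $S_{n_k}\to 0$ pointwise; the Hypercyclicity Criterion then yields hypercyclicity.

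For necessity, suppose $B_\lambda$ is hypercyclic with hypercyclic vector $f$, and enumerate $A=\{v_1,v_2,\dots\}$. By density of the orbit, for each $m$ I would pick $n_m$ so that $B_\lambda^{n_m}f$ approximates the target $m\sum_{j\le m}e_{v_j}$; in particular $(B_\lambda^{n_m}f)(v_j)\approx m$ for every $j\le m$, i.e. $\big|\sum_{u\in\Chi^{n_m}(v_j)}\lambda(v_j\to u)f(u)\big|\approx m$. Estimating the left-hand side by the triangle inequality in $\ell^1$, by Hölder in $\ell^p$, and by pulling out $\|f\|_\infty$ in $c_0$, each such inequality forces the corresponding quantity $\sup_u|\lambda(v_j\to u)|$, $\sum_u|\lambda(v_j\to u)|^{p^*}$, or $\sum_u|\lambda(v_j\to u)|$ over $\Chi^{n_m}(v_j)$ to be $\gtrsim m/\|f\|$, since $f$ is fixed. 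Holding $j$ fixed and letting $m\to\infty$ then shows the divergence along the \emph{single} subsequence $(n_m)$ for every vertex simultaneously, which is exactly the required condition.

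The main obstacle, in my view, is twofold. First, the $c_0$ right inverse cannot be built by naively spreading mass over a possibly infinite $\Chi^n(v)$, since the resulting $g$ need not vanish at infinity; the finite-support refinement in \eqref{key-lemma} is precisely what makes $S_{n_k}e_v$ an admissible vector of $c_0(A)$, and is the technical heart of the sufficiency. Second, in the necessity one must obtain a \emph{common} subsequence working for all $v\in A$ at once—pointwise divergence for each $v$ separately is strictly weaker—and this is exactly why the targets are chosen as growing multiples $m\sum_{j\le m}e_{v_j}$ rather than a single fixed vector, the disjointness of the sets $\Chi^{n}(v_j)$ and the countability of $A$ making the extraction go through.
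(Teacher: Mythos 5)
The paper does not prove this statement; it is quoted verbatim from \cite[Theorem 4.4]{Karl1}, so there is no internal proof to compare against. Your argument is correct and is essentially the standard one from that source: sufficiency via the Hypercyclicity Criterion with right inverses whose norms are computed by duality against the weight vector (the finite-support refinement of \eqref{key-lemma} handling the $c_0$ case, exactly as you say), and necessity via approximating the growing targets $m\sum_{j\le m}e_{v_j}$ to extract a single sequence $(n_k)$ valid for all vertices; the only points left implicit --- that $(n_m)$ can be taken increasing because a hypercyclic orbit returns to any open set infinitely often, and that the dual-norm infima need only be approximated within a constant factor by (finitely supported) vectors --- are routine.
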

We conclude this subsection by introducing some notations that will be used throughout the paper. In a rooted directed tree with 
root 
$\texttt{r}$, the set
\[\mathrm{Gen}_n:=\Chi^n(\texttt{r}),\, n\in \NN_0,\]
is called the \emph{$n$-th generation}. In an unrooted directed tree, the \emph{$n$-th generation}, $n\in\ZZ$, with respect to a given vertex $v_0$, denoted as $\mathrm{Gen}_n:=\mathrm{Gen}_n(v_0)$,  consists of all vertices that belong to $\Chi^{k+n}(\Par^k(v_0))$ for some $k \geq  \max\{-n,0\}$. 

For two vertices $u$ and $v$ in a directed tree, we write $u \sim v$ if they belong to the same generation. A weight $\lambda=(\lambda_v)_{v\in A}$ for a directed tree $A$ is called \emph{symmetric} if $\lambda_u=\lambda_v$ whenever $u\sim v$.

Furthermore, an unrooted directed tree is said to have a \emph{free left end} if there exists some integer $n$ such that $\mathrm{Gen}_k$ is a singleton for all $k\leq n$. It is interesting to notice that an unrooted tree has a free left end if and only if it has at least one finite generation, in which case it has infinitely many. 

\subsection{Hypercyclic algebras}

As we have already mentioned, the set $HC(T)$ of hypercyclic vectors of a hypercyclic operator $T:X\to X$ is big in the sense of Baire. One can ask if/when this set is big enough to be \emph{spaceable}, that is, to contain an infinite dimensional closed subspace (but $0$). This is the origin of the concept of \emph{hypercyclic subspaces}, which is a topic well studied in linear dynamics nowadays. Whenever $T$ acts on a Banach algebra $X$, it is natural to ask if/when $HC(T)\cup\{0\}$ contains a subalgebra of $X$. Such a structure is called a \emph{hypercyclic algebra}. Sometimes the former contains a subalgebra that is not finitely generated. In this case we say that $HC(T)$ is \emph{algebrable}. When such a structure is also dense, we say that $HC(T)$ is \emph{dense-algebrable} (see \cite{aron2015lineability} for more details on the topic).

The first (negative) result on this topic  was obtained in 2007 by Aron, Conejero, Peris and Seoane-Sepúlveda \cite{aron2007powers}, who proved that no translation operator on the space of entire functions has a hypercyclic algebra. The first positive result is due to Bayart and Matheron \cite[Theorem 8.26]{bayartbook} and Shkarin \cite[Theorem 1.4]{shkarin2010}, who have shown independently that the operator of complex differentiation $D:f\mapsto f'$ acting on the space of entire functions $H(\CC)$ has a hypercyclic algebra. Bayart and Matheron's construction, which is based on a Baire argument, was later explored by other authors (see \cite{bes2018algebra}, \cite{bes2020algebrable}, \cite{bayart2019algebra} and \cite{besernst2020algebra}). In \cite{BCP}, expanding on ideas from \cite{bes2020algebrable}, a very general version of their criterion was obtained, which can be used to check the dense-algebrability of the set of hypercyclic vectors for an operator. We state it in full generality for the sake of completeness. We will need to establish the following notation. Let $X$ be an $F$-algebra, that is, a completely metrizable topological algebra. For $d\geq 1$, $u=(u_1,\dots, u_d)\in X^d$ and $\alpha=(\alpha_1,\dots,\alpha_d)\in\NN_0^d\backslash\{(0,\dots,0)\}$, we define $u^\alpha:=u_1^{\alpha_1}\cdots u_d^{\alpha_d}\in X$, with the convention that $u_j^0$ means that the term is omitted from the product. 

\begin{theorem}[\protect{\cite[Theorem 2.1]{BCP}}] 
 Let $T$ be a continuous operator on a separable commutative $F$-algebra $X$ and let $d\geq 1$. Assume that for any $P\subset \NN_0^d \backslash\{(0,\dots,0)\}$ finite and non-empty, 
   for any non-empty open subsets $U_1,\dots,U_d,V, W$ of $X$, with $0\in W$, 
 there exist $u=(u_1,\dots,u_d)\in U_1\times\cdots\times U_d$, $\beta\in P$ and $N\geq 1$ such that
 \begin{itemize}
     \item $T^N(u^\beta)\in V$,
     \item $T^N(u^\alpha)\in W $ for all $\alpha\in P$, $\alpha\neq\beta$.
 \end{itemize}
 Then the set of $d$-tuples that generate a hypercyclic algebra for $T$ is residual in $X^d$. Moreover, if the assumptions are satisfied for all $d\geq 1$, then $T$ admits a dense and not finitely generated hypercyclic algebra.
 \label{thm:generalcriterion}
\end{theorem}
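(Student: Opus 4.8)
The plan is to run a Baire category argument in the product space $X^d$, which is separable and completely metrizable, hence a Baire space. First I would record the elementary but crucial observation that a $d$-tuple $u=(u_1,\dots,u_d)$ generates a hypercyclic algebra for $T$ exactly when every nonzero element of the constant-term-free subalgebra it generates is a hypercyclic vector; that is, when for every finite nonempty $P\subset\NN_0^d\setminus\{0\}$ and every choice of nonzero scalars $(c_\alpha)_{\alpha\in P}$ one has $\sum_{\alpha\in P}c_\alpha u^\alpha\in HC(T)$ whenever this sum is nonzero. The goal is then to exhibit a residual subset of $X^d$ consisting of such tuples.

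Next I would fix a countable base $(O_j)_{j\geq1}$ of nonempty open subsets of $X$ and a countable base $(W_k)_{k\geq1}$ of open neighborhoods of $0$, and for each finite nonempty $P$ and each $j,k$ define
\[\mathcal{A}(P,j,k)=\Big\{u\in X^d:\ \exists N\geq1,\ \exists\beta\in P,\ T^N(u^\beta)\in O_j\ \text{and}\ T^N(u^\alpha)\in W_k\ \forall\alpha\in P\setminus\{\beta\}\Big\}.\]
Each such set is open, since multiplication in $X$ and the iterates $T^N$ are continuous, so the defining conditions are open and the set is a union over the countably many pairs $(N,\beta)$ of open sets. The key point is that each $\mathcal{A}(P,j,k)$ is dense: given any basic open box $U_1\times\cdots\times U_d$ in $X^d$, applying the hypothesis with this $P$, these $U_i$, $V=O_j$ and $W=W_k$ (which indeed contains $0$) produces a tuple of the box lying in $\mathcal{A}(P,j,k)$. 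Since the finite subsets of $\NN_0^d$ are countable, $\mathcal{R}:=\bigcap_{P,j,k}\mathcal{A}(P,j,k)$ is a countable intersection of dense open sets, hence residual.

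It then remains to show that $\mathcal{R}$ is contained in the set of tuples generating a hypercyclic algebra. Here I would take $u\in\mathcal{R}$ and a nonzero $y=\sum_{\alpha\in P}c_\alpha u^\alpha$ and, given a prescribed open target containing a point $w$, try to produce $N$ with $T^N y$ near $w$ by forcing a single monomial to become dominant: one picks $\beta\in P$, chooses $O_j$ inside a small ball around $w/c_\beta$, and chooses $W_k$ so small that, against the finitely many fixed coefficients $c_\alpha$, the contribution $\sum_{\alpha\neq\beta}c_\alpha T^N(u^\alpha)$ is negligible, so that $T^N y\approx c_\beta T^N(u^\beta)\approx w$. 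I expect this step to be the main obstacle: the criterion only guarantees that \emph{some} index $\beta$ is sent into the target while the others are sent near $0$, and nothing a priori forces the realized dominant index to be the one for which the target was rescaled. The heart of the argument is thus the bookkeeping that organizes the countably many targets $O_j$ and neighborhoods $W_k$ — exploiting the freedom to enlarge $P$ and to invoke $\mathcal{R}$ for singletons $P=\{\beta\}$, which already yields that each $u^\beta$ is hypercyclic — so that an arbitrary nonzero algebra element can genuinely be steered into any prescribed open set.

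Finally, for the ``moreover'' part, I would apply the first conclusion for every $d\geq1$ simultaneously. Since each corresponding residual set is in particular dense, a diagonal construction produces a single sequence $(u_i)_{i\geq1}$, with the $u_i$ ranging over a prescribed countable dense family of $X$, such that every initial tuple $(u_1,\dots,u_d)$ generates a hypercyclic algebra. The subalgebra generated by all the $u_i$ then has every nonzero element hypercyclic (each lies in a finitely generated piece), is dense (it contains the dense set $\{u_i\}$), and is not finitely generated (a finite generating set would lie in the subalgebra generated by $u_1,\dots,u_d$ for some $d$, forcing the later, generically algebraically independent, generators to be algebraic over it). This yields a dense, non-finitely-generated hypercyclic algebra for $T$.
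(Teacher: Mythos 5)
The paper does not prove this statement itself (it is imported verbatim from [BCP, Theorem 2.1]), so your proposal has to be measured against the known proof. Your Baire setup is exactly right and matches it: the sets $\mathcal A(P,j,k)$ are open by continuity of multiplication and of $T^N$, dense by the hypothesis applied to basic boxes $U_1\times\cdots\times U_d$ with $V=O_j$ and $W=W_k$, and there are countably many of them, so $\mathcal R$ is residual. The problem is the step you yourself flag as ``the main obstacle,'' and it is a genuine gap, not a bookkeeping issue. Given $u\in\mathcal R$ and a nonzero $z=\sum_{\alpha\in P}c_\alpha u^\alpha$, membership in $\mathcal A(P,j,k)$ only tells you that \emph{some} $\beta\in P$ (depending on $j,k$, hence on the target) satisfies $T^N(u^\beta)\in O_j$ while the other monomials go into $W_k$; consequently $T^Nz$ lands near the set $c_\beta O_j$ for an uncontrolled $\beta$. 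Rescaling the target by $c_{\gamma_0}$ for a chosen $\gamma_0$ fails because the realized $\beta$ need not equal $\gamma_0$, and your two suggested escapes do not help: enlarging $P$ only worsens the indeterminacy, and invoking $\mathcal R$ for singletons $P=\{\beta\}$ only shows each individual $u^\beta$ is hypercyclic, which says nothing about the sums $\sum_\alpha c_\alpha u^\alpha$.

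The missing ingredient is a nontrivial external theorem. The correct conclusion one can extract from $u\in\mathcal R$ is: for every $v\in X$ and every neighborhood basis, a pigeonhole over the finitely many $\beta\in P$ produces some $\gamma=\gamma(v)\in P$ with $c_{\gamma}v\in\overline{\Orb(z,T)}$. Since $T$ is linear, $c_\gamma^{-1}\Orb(z,T)=\Orb(c_\gamma^{-1}z,T)$, so this says $X=\bigcup_{\gamma\in P}\overline{\Orb(c_\gamma^{-1}z,T)}$, i.e.\ the union of finitely many orbits is dense. One then invokes the Costakis--Peris theorem (equivalently, the Bourdon--Feldman theorem that somewhere dense orbits are dense) to conclude that some $\Orb(c_\gamma^{-1}z,T)$ is dense, hence $z\in HC(T)$ because $x\mapsto c_\gamma x$ is a homeomorphism. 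Without this argument the proof does not close, and no amount of reorganizing the countable families $O_j$, $W_k$ substitutes for it. The ``moreover'' part is also only sketched: the freeness and non-finite-generation of the algebra produced by the diagonal construction require a separate argument (in [BCP] this is built into additional residual conditions), and your appeal to ``generically algebraically independent'' generators is an assertion, not a proof.
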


\begin{figure}[H]
    \centering
    \includegraphics[width=0.5\linewidth]{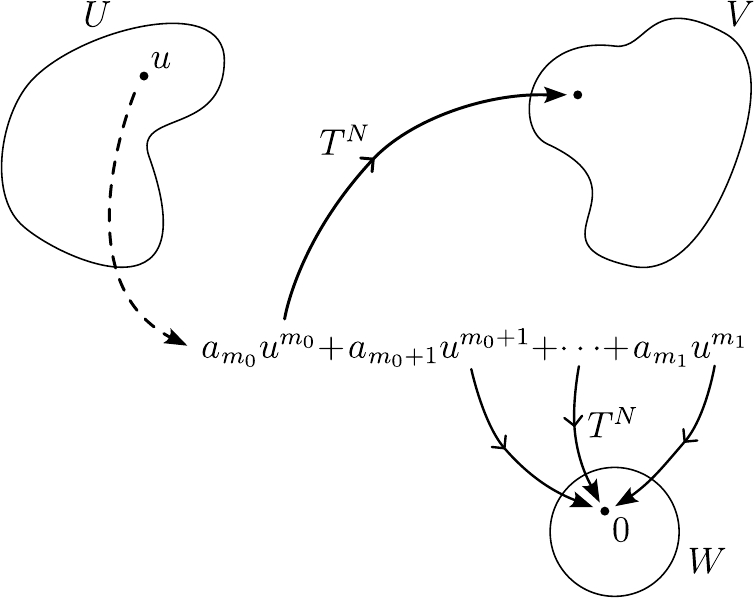}
    \caption{Graphical representation with $d=1$ and $\beta=\min A$.}
    \label{fig:trans-alg}
\end{figure}

In Figure \ref{fig:trans-alg}, one can see a graphical representation of this criterion with $d=1$, $A=\{m_0,\dots,m_1\}$, $m_0\leq m_1$, and $\beta=\min P$.

With this criterion at hand, the authors in \cite{BCP} have obtained characterizations of backward shifts supporting hypercyclic algebras on Fréchet sequence algebras equipped with either the coordinatewise or convolution product of sequences. In the case of Banach sequence algebras for which the canonical vectors form a Schauder basis, it is known that any hypercyclic backward shift has a dense and not finitely generated hypercyclic algebra. In this paper, we aim to extend these results to shifts on trees. We only consider the coordinatewise product on a tree, which can very naturally be defined by $(fg)(v)=f(v)g(v)$ for all $f,g\in \KK^A$ and $v\in A$. Thus, whenever we mention an algebra on $c_0$ or $\ell^p$-spaces of a tree, we are exclusively considering the coordinatewise product on these Banach algebras.  

It is worth noting that the conditions of the previous theorem can be seen as a strengthening of Birkhoff's transitivity theorem, which states that a given bounded linear operator $T$ defined on a separable Banach space $X$ is hypercyclic whenever it is topologically transitive, that is, for any pair of nonempty open subsets $U$ and $V$ of $X$, there exists a nonnegative integer $N$ such that $T^N(U) \cap V \neq \emptyset$. Furthermore, if the condition $T^k(U) \cap V \neq \emptyset$ holds for all $k\geq N$, then the operator $T$ is  called mixing.

The following simple proposition will be useful in the proof of Theorem \ref{thm:necessary:unrooted}. It is worth mentioning that the same technique was also used inside the proof of \cite[Theorem 5.1]{bayart2019algebra} for obtaining hypercyclic powers of vectors.

\begin{proposition}\label{prop:hc:powers}
    Let $T$ be a continuous operator on a separable $F$-algebra $X$. Suppose that, for all $m\geq 1$ and all pairs $(U,V)$ of non-empty open subsets of $X$, one can find $u\in U$ and $N\in\NN$ such that \[T^Nu^m\in V.\] Then there is a residual subset $\A$ of $X$ such that, for all $x\in \A$ and all $m\in\NN$, $x^m\in HC(T)$.
\end{proposition}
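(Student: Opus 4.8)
The plan is to run a Baire category argument, reducing everything to the continuity of the power maps and to the hypothesis, which is itself precisely a density statement. First I would fix a countable base $(V_j)_{j\geq 1}$ of non-empty open subsets of $X$, available since $X$ is separable and metrizable. Recalling that a vector is hypercyclic exactly when its orbit meets every basic open set, one writes $HC(T)=\bigcap_{j\geq 1}\bigcup_{n\geq 0}T^{-n}(V_j)$, which exhibits $HC(T)$ as a $G_\delta$ subset of $X$.

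Next, for each fixed $m\geq 1$, I would consider the power map $\phi_m\colon X\to X$, $\phi_m(x)=x^m$. Since $X$ is a topological algebra, multiplication is continuous, hence so is $\phi_m$, being an $m$-fold product of the continuous identity map with itself. Pulling back the previous expression through $\phi_m$ yields
$$\phi_m^{-1}\big(HC(T)\big)=\bigcap_{j\geq 1}\bigcup_{n\geq 0}\big(T^{n}\circ\phi_m\big)^{-1}(V_j).$$
Each set $(T^n\circ\phi_m)^{-1}(V_j)$ is open, being the preimage of an open set under a continuous map, so each inner union $O_{m,j}:=\bigcup_{n\geq 0}(T^n\circ\phi_m)^{-1}(V_j)$ is open.

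The heart of the matter is the density of $O_{m,j}$. Given any non-empty open $U\subset X$, I would apply the hypothesis with this $U$ and with $V=V_j$ to produce $u\in U$ and $N\in\NN$ with $T^N(u^m)\in V_j$; this says exactly that $u\in U\cap O_{m,j}$, so $O_{m,j}$ is dense. Consequently $\phi_m^{-1}(HC(T))=\bigcap_{j\geq 1}O_{m,j}$ is a dense $G_\delta$, hence residual by Baire's theorem, where completeness of $X$ enters. Finally I would set $\A:=\bigcap_{m\geq 1}\phi_m^{-1}(HC(T))$; as a countable intersection of residual sets it is itself residual, and by construction every $x\in\A$ satisfies $x^m\in HC(T)$ for all $m\in\NN$.

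As for the difficulty, there is essentially no obstacle beyond bookkeeping, since the hypothesis is tailored to deliver the density step directly. The only points deserving care are that $X$ being an $F$-algebra guarantees joint continuity of multiplication, so that each $\phi_m$ is continuous and the preimages above are genuinely open, and that separability is what makes $HC(T)$ a $G_\delta$, keeping all the countable intersections within the scope of Baire's theorem.
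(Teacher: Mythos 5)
Your proof is correct and follows essentially the same route as the paper: the sets $O_{m,j}$ you define are precisely the paper's sets $\A(m,k)=\{u\in X:\exists N,\ T^Nu^m\in V_k\}$, their openness comes from continuity of $T$ and of the power map, their density is exactly the hypothesis, and Baire's theorem finishes the argument. The only difference is cosmetic (you phrase the construction as a pullback of $HC(T)$ under $\phi_m$), so there is nothing to add.
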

\begin{proof}
    Let $(V_k)_k$ be a basis of open sets for the topology of $X$. For each $k,m\geq 1$, define
    \[\A(m,k):=\{u\in X : \exists N\geq 1 \text{ such that }T^Nu^m\in V_k\}.\]
    Of course each $\A(m,k)$ is open in $X$, for $T$ is continuous. From the hypothesis, it follows that each $\A(m,k)$ is also dense in $X$. From the Baire Theorem we conclude that $\A:=\bigcap_{m,k\geq 1} \A(m,k)$ is a dense $G_\delta$-set. It is now easy to see that $\A$ is the desired residual set.
\end{proof}

The following simple lemma, extracted from \cite{BCP}, will be used multiple times throughout the text.
\begin{lemma}\label{lemma:maps}
Let $d\geq1$, let $P$ be a nonempty finite subset of $\NN_{0}^{d}\setminus\{(0,\ldots,0)\}$ and define, for each $\alpha=(\alpha_1,\ldots,\alpha_d)\in P$, the real linear map $L_\alpha:\RR^d\to\RR$ by
$L_\alpha(s)=\sum_{j=1}^d\alpha_js_j,$ $s\in\RR^d$. Then there exist $s\in(0,+\infty)^d$ and $\beta\in P$ such that $L_\alpha(s)>L_\beta (s)=1$, for all $\alpha\in P\backslash\{\beta\}$.
\end{lemma}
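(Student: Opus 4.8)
The plan is to reduce everything to finding a single direction $s_0\in(0,+\infty)^d$ at which the finitely many values $L_\alpha(s_0)$, $\alpha\in P$, are pairwise distinct; the index achieving the minimum will then be our $\beta$, and a single rescaling will force the normalization $L_\beta(s)=1$ while preserving the strict inequalities.

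First I would observe that for any two distinct $\alpha,\alpha'\in P$ the set $H_{\alpha,\alpha'}:=\{s\in\RR^d:\ L_\alpha(s)=L_{\alpha'}(s)\}=\{s:\ (\alpha-\alpha')\cdot s=0\}$ is a genuine hyperplane, since $\alpha-\alpha'\neq 0$; in particular it has empty interior. Because $P$ is finite there are only finitely many such hyperplanes, so their union $H:=\bigcup_{\alpha\neq\alpha'}H_{\alpha,\alpha'}$ still has empty interior. As $(0,+\infty)^d$ is a nonempty open set, it cannot be contained in $H$, and I may choose $s_0\in(0,+\infty)^d\setminus H$. By construction the numbers $L_\alpha(s_0)$, $\alpha\in P$, are then pairwise distinct.

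Next I would let $\beta\in P$ be the unique element attaining $\min_{\alpha\in P}L_\alpha(s_0)$ (uniqueness holds precisely because the values are distinct), so that $L_\alpha(s_0)>L_\beta(s_0)$ for every $\alpha\in P\setminus\{\beta\}$. Moreover $L_\beta(s_0)>0$: indeed $\beta$ has nonnegative entries, not all zero, and every coordinate of $s_0$ is strictly positive, whence $L_\beta(s_0)=\sum_{j=1}^d\beta_j (s_0)_j>0$. Finally I would rescale by setting $s:=s_0/L_\beta(s_0)$; since $L_\beta(s_0)>0$ and $s_0\in(0,+\infty)^d$, we still have $s\in(0,+\infty)^d$. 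By linearity $L_\alpha(s)=L_\alpha(s_0)/L_\beta(s_0)$ for every $\alpha$, so that $L_\beta(s)=1$ and, for each $\alpha\neq\beta$, $L_\alpha(s)=L_\alpha(s_0)/L_\beta(s_0)>1=L_\beta(s)$, which is exactly the claimed conclusion.

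The only delicate point — and the step I expect to be the main obstacle — is the genericity argument, namely guaranteeing a \emph{strictly positive} direction at which all the functionals separate. This is where both the finiteness of $P$ and the elementary fact that a finite union of proper hyperplanes cannot contain an open subset of $\RR^d$ are essential; everything after that is a routine choice of minimizer together with a linear rescaling.
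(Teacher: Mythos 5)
Your proof is correct and complete: the generic choice of $s_0\in(0,+\infty)^d$ off the finite union of hyperplanes $\{(\alpha-\alpha')\cdot s=0\}$ makes the values $L_\alpha(s_0)$ pairwise distinct, the unique minimizer $\beta$ satisfies $L_\beta(s_0)>0$ because $\beta\in\NN_0^d\setminus\{0\}$ and $s_0$ has strictly positive coordinates, and the positive rescaling preserves both positivity and the strict inequalities while normalizing $L_\beta(s)=1$. The paper itself gives no proof of this lemma (it is quoted from \cite{BCP}), so there is nothing to compare against; your genericity-plus-rescaling argument is the standard one and all steps, including the fact that a finite union of proper hyperplanes cannot contain a nonempty open subset of $\RR^d$, are sound.
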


From now on, all Banach spaces considered will be over the field of complex numbers, that is, $\KK$ will denote $\CC$. Moreover, for any non-zero complex number $z$ and $s>0$, $z^s$ will denote one of the $s$-th powers of $z$.
 Furthermore, for any rooted tree $A$, a finite subset $F$ of $A$,  $f=\sum_{v\in F}f(v)e_v\in \CC^A$,  and for any $s>0$, we denote 
\[f^s=\sum_{v\in F} f(v)^s e_v.\]

\section{Shifts on rooted directed trees}\label{sec:rooted}

\subsection{Characterizations on $\ell^p(A)$ and $c_0(A)$}

We begin by proving our first main results, which characterize when weighted backward shifts on $\ell^p$-spaces, $1\leq p<\infty$, of rooted directed trees support hypercyclic algebras.
\begin{theorem}\label{cara:lp:hc-alg-rooted}
    Let $A$ be a rooted directed tree  and  $\lambda=(\lambda_v)_{v\in A}$ be a weight  such that its induced backward shift $B_\lambda:\ell^p(A)\to \ell^p(A)$ is bounded, with  $1\leq p<+\infty$. The following assumptions are equivalent.
\begin{enumerate}[$(i)$]
    \item $B_\lambda$ supports a dense, countably generated, free hypercyclic algebra.
    \item $B_\lambda$ supports a hypercyclic algebra.
    \item There exist an integer  $m\geq p$ and $f\in \ell^p(A)$ such that $f^m\in HC(B_\lambda)$.
    \item There is a sequence of positive integers $(n_k)_{k\geq1}$ such that, for every $v\in A$, 
    \[\dis\sup_{u\in\Chi^{n_k}(v)} |\lambda(v\to u)|\xrightarrow{k\to+\infty}+\infty.\]
\end{enumerate}
\end{theorem}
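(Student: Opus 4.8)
The plan is to establish the cycle of implications $(i)\Rightarrow(ii)\Rightarrow(iii)\Rightarrow(iv)\Rightarrow(i)$. The implication $(i)\Rightarrow(ii)$ is immediate, since a dense free hypercyclic algebra is in particular a hypercyclic algebra. For $(ii)\Rightarrow(iii)$ I would pick any nonzero element $f$ of the given hypercyclic algebra $\A\subset HC(B_\lambda)\cup\{0\}$; since $\A$ is closed under the (coordinatewise) product, $f^m\in\A$ and $f^m\neq 0$ for every integer $m\ge 1$, whence $f^m\in HC(B_\lambda)$. Taking any integer $m\ge p$ and recalling that $f\in\ell^p(A)$ gives $(iii)$.

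The arithmetic heart of the equivalence is $(iii)\Rightarrow(iv)$, which is exactly where the hypothesis $m\ge p$ is used. Assume $f^m\in HC(B_\lambda)$ with $f\in\ell^p(A)$ and $m\ge p$. Since $\ell^p(A)\subset\ell^m(A)$, the family $f^m$ satisfies $\|f^m\|_1=\sum_{v\in A}|f(v)|^m<\infty$, and for every $N\ge 1$ and every $v\in A$,
\[
\big|(B_\lambda^N f^m)(v)\big|=\Big|\sum_{u\in\Chi^N(v)}\lambda(v\to u)f(u)^m\Big|\le \|f^m\|_1\sup_{u\in\Chi^N(v)}|\lambda(v\to u)|.
\]
Now fix an enumeration $(v_i)_{i\ge 1}$ of $A$ and, for each $j\ge 1$, consider the finitely supported target $g_j=j\sum_{i=1}^j e_{v_i}\in\ell^p(A)$. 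Density of the orbit of $f^m$ yields $n_j$ with $\|B_\lambda^{n_j}f^m-g_j\|_p<1$, so that $|(B_\lambda^{n_j}f^m)(v_i)|>j-1$ for every $i\le j$. Combined with the displayed bound, this forces $\sup_{u\in\Chi^{n_j}(v_i)}|\lambda(v_i\to u)|\ge (j-1)/\|f^m\|_1$ for all $i\le j$, so the single sequence $(n_j)_j$ witnesses $(iv)$ at every vertex simultaneously. The crucial point is that the $\ell^p$-to-$\ell^1$ passage afforded by $m\ge p$ turns the coefficients $f(u)^m$ into a fixed summable family, decoupling the supremum of the weights from the geometry of $f$; for $p>1$ this is precisely what makes $(iv)$ strictly stronger than plain hypercyclicity.

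For $(iv)\Rightarrow(i)$ I would verify the hypotheses of Theorem \ref{thm:generalcriterion} for every $d\ge 1$. Given a finite nonempty $P\subset\NN_0^d\setminus\{0\}$ and open sets $U_1,\dots,U_d,V,W$ with $0\in W$, I reduce to balls centered at finitely supported vectors $a_1,\dots,a_d$, $g$ and $0$, and apply Lemma \ref{lemma:maps} to obtain $s\in(0,\infty)^d$ and $\beta\in P$ with $L_\alpha(s)>L_\beta(s)=1$ for all $\alpha\neq\beta$. Using $(iv)$, I choose $N=n_k$ so large that for every $v\in\supp(g)$ there is $w_v\in\Chi^N(v)$ with $|\lambda(v\to w_v)|$ as large as needed, and so large that $\Chi^N(v)$ lies beyond $\bigcup_j\supp(a_j)$ (here rootedness is essential: a finitely supported base vector is annihilated by $B_\lambda^N$ once $N$ exceeds its generations). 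Setting $\gamma_v=g(v)/\lambda(v\to w_v)$ with a fixed logarithm branch and defining $u_j=a_j+\sum_{v\in\supp(g)}\gamma_v^{s_j}e_{w_v}$, the disjointness of the descendant sets $\Chi^N(v)$ in a tree gives $(B_\lambda^N u^\beta)(v)=\lambda(v\to w_v)\gamma_v^{L_\beta(s)}=g(v)$ and $(B_\lambda^N u^\alpha)(v)=g(v)^{L_\alpha(s)}\lambda(v\to w_v)^{1-L_\alpha(s)}$. Since $L_\beta(s)=1$ we obtain $B_\lambda^N u^\beta=g\in V$, while $L_\alpha(s)>1$ makes $B_\lambda^N u^\alpha\to 0\in W$ as the weights grow; the smallness of $\gamma_v^{s_j}$ keeps each $u_j\in U_j$. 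Verifying this for all $d$ yields, via Theorem \ref{thm:generalcriterion}, a dense, not finitely generated hypercyclic algebra, and the algebraic independence of the generators produced this way upgrades it to the dense, countably generated, free algebra of $(i)$.

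The main obstacle I anticipate lies in the bookkeeping for $(iv)\Rightarrow(i)$: one must simultaneously place the bumps $w_v$ for all $v\in\supp(g)$ inside a single generation with uniformly large weights, keep them off the supports of the $a_j$, and fix the branches of $\gamma_v^{s_j}$ so that their $\beta$-weighted product is exactly $\gamma_v$. The tree structure is actually an ally here, since distinct vertices of a common generation have disjoint $N$-th descendant sets and the root kills every residual contribution of the base vectors; the genuinely delicate estimates are the two \emph{simultaneous} smallness requirements, namely that the perturbations $\gamma_v^{s_j}e_{w_v}$ be small enough to keep $u_j$ within $U_j$ and that $\|B_\lambda^N u^\alpha\|_p$ be small enough to land in $W$ for all $\alpha\in P\setminus\{\beta\}$ at once.
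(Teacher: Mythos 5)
Your proposal is correct and follows essentially the same route as the paper: the same $\ell^p$-to-$\ell^1$ estimate $\sum_{u}|f(u)|^m\le\|f\|_p^m$ drives $(iii)\Rightarrow(iv)$ (you scale the target $g_j$ up where the paper scales $f$ down, which is the same trick), and $(iv)\Rightarrow(i)$ uses the identical single-bump construction $u_j=a_j+\sum_v\gamma_v^{s_j}e_{w_v}$ fed into Theorem \ref{thm:generalcriterion} via Lemma \ref{lemma:maps}.
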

\begin{proof}
    It is clear that $(i)\Rightarrow (ii)\Rightarrow (iii)$. Let us show that $(iii)\Rightarrow (iv)$. Assume that $(iii)$ holds. Let $f\in\ell^p(A)$ satisfying $(iii)$ for $m\in\NN$ with $m\geq p$, $F\subset A$ finite and $\delta\in (0,\frac12)$. Since $\alpha f^m\in HC(B_\lambda)$ for all $\alpha\neq 0$, we can assume without loss of generality that $\|f\|_p<\delta^{1/m}$. Since $f^m$ is hypercyclic, there exists a positive integer $n\in\mathbb{N}$ such that $\Chi^n(A)\cap F=\varnothing$ and
\begin{equation}\label{exists-n}
    \bigg\|B_{\lambda}^{n}f^m-\sum_{u\in F}e_u\bigg\|_p<\delta.\end{equation}
From \eqref{exists-n}, we obtain, for every $v\in F$,
\begin{align*}
\dfrac{1}{2}<1-\delta<\big|(B_{\lambda}^{n}f^m)(v)\big|&\leq \sum_{u\in\Chi^n(v)}|\lambda(v\to u) f^m(u)| \\
&\leq \sup_{u\in\Chi^n(v)}|\lambda(v\to u)|\, \bigg(\sum_{u\in\Chi^n(v)}|f^m(u)|\bigg)\\
&\leq \sup_{u\in\Chi^n(v)}|\lambda(v\to u)|\,
\|f\|_m^m\\
&\leq \sup_{u\in\Chi^n(v)}|\lambda(v\to u)|\,
\|f\|_p^m\\
&\leq\delta \, \sup_{u\in\Chi^n(v)}|\lambda(v\to u)|,
\end{align*}
(notice that $m\geq p$ implies $\|\cdot\|_{m}\leq \|\cdot\|_p$). Therefore,
\[\sup_{u\in\Chi^n(v)}|\lambda(v\to u)|>\frac{1}{2\delta},\quad \forall v\in F.\]
To finish the proof, it is enough to apply the above arguments to an increasing sequence $(F_k)_{k\geq1}$ of a finite subset of $A$ such that $\bigcup_{k\geq1}F_k=A$ and to a positive sequence $(\delta_k)_{k\geq1}$ tending to zero. Then for each $k$, we find $n_k$ satisfying \eqref{exists-n}, which will define the sequence $(n_k)_{k\geq1}$ of positive integers as we wanted. 
    
Now, let us proceed to the proof of $(iv)\Rightarrow (i)$. Assume that $(iv)$ holds.   We shall apply Theorem \ref{thm:generalcriterion}. Let $d\geq 1$, let $P\subset \NN_0^d\backslash\{(0,\ldots,0)\}$ be finite and let $U_1,U_2,\ldots,U_d,V,W\subset \ell^p(A)$ be non-empty and open, with $0\in W$. We fix $(f_1,\dots, f_d)\in U_1\times \cdots\times U_d$ and $g\in V$ with support on some finite subset $F$ of $A$. We then apply Lemma \ref{lemma:maps} and we find $s\in (0,+\infty)^d$ and $\beta\in P$ such that the maps  $L_\alpha:\RR^d\to\RR$ defined $L_\alpha(s)=\sum_{j=1}^d\alpha_js_j$, for $s\in\RR^d$ and $\alpha\in P$, satisfy $1=L_\beta(s)<L_\alpha(s)$ for all $\alpha\in P\backslash\{\beta\}$.

Now, according to the hypothesis, there is $(n_k)_{k\geq1}$ an increasing sequence of positive entire numbers such that, for all $v\in A$,
    \[C_k(v):=\sup_{u\in \Chi^{n_k}(v)}\big|\lambda(v\to u)\big|\xrightarrow{k\to\infty} +\infty.\]
Thus, for each $a\in A$ and each $k\in\NN$, we can find $u_{a,k}\in \Chi^{n_k}(a)$ such that \begin{equation}\label{arafat}
    \frac{1}{|\lambda(a\to u_{a,k})|}< \frac{1}{C_k(a)}+\frac{1}{k}.
\end{equation}
We define $g_{a,k}:A\to \KK$ by \[g_{a,k}=\frac{1}{\lambda(a\to u_{a,k})}e_{u_{a,k}}\]
and we notice that
\begin{align*}
    (B_\lambda^{n_k}e_{u_{a,k}})(v)
    &=\sum_{u\in \Chi^{n_k}(v)}\lambda(v\to u)e_{u_{a,k}}(u) \\ &= \begin{cases} 0 &\text{ if } v\neq a\\ \lambda(a\to u_{a,k})  &\text{ if } v= a \end{cases}\\
    &=\lambda(a\to u_{a,k})e_{a}(v).
\end{align*}
Now, for all $k$ sufficiently big (conditions on the size of $k$ will be given during the proof), we set $h_k=(h_{k,1},\dots,h_{k,d})$, where 
\[h_{k,j}=f_j+\sum_{a\in F}\big(g(a)g_{a,k}\big)^{s_j},\quad j=1,\dots, d.\]
If $k\in\NN$ is big enough, we will have $\Chi^{n_k}(A)\cap F=\varnothing $. Thus, for each $\alpha=(\alpha_1,\ldots,\alpha_d)\in P$,
\begin{align*}
    h_k^\alpha = f_1^{\alpha_1}\cdots f_d^{\alpha_d}+\sum_{a\in F} \big(g(a)g_{a,k}\big)^{L_\alpha(s)}.
\end{align*}
Hence,
\begin{align*}
    B_\lambda^{n_k}h_k^\alpha
    &=\sum_{a\in F}g(a)^{L_\alpha(s)}B_\lambda^{n_k}g_{a,k}^{L_\alpha(s)} \\
    &=\sum_{a\in F}\frac{g(a)^{L_\alpha(s)}}{\lambda(a\to u_{a,k})^{L_\alpha(s)}}B_\lambda^{n_k}e_{u_{a,k}} \\
    &=\sum_{a\in F}\frac{g(a)^{L_\alpha(s)}}{\lambda(a\to u_{a,k})^{L_\alpha(s)-1}} e_{a}.
\end{align*}
If $\alpha=\beta$, we have 
\[B_\lambda^{n_k}h_k^\beta=\sum_{a\in F}g(a)e_a=g\in V;\]
and if $\alpha\in P\backslash\{\beta\}$, since $L_\alpha(s)-1>0$, from \eqref{arafat}, we find 
\begin{align*}
    \big\|B_\lambda^{n_k}h_k^\alpha\big\|_p
    &= \bigg( \sum_{a\in F} \frac{|g(a)|^{pL_\alpha(s)}}{|\lambda(a\to u_{a,k})|^{p(L_\alpha(s)-1)}} \bigg)^{1/p}\xrightarrow{k\to\infty} 0.
\end{align*}
Thus, $B_\lambda^{n_k}h_k^\alpha\in W$ if $k$ is big enough. Finally, still from \eqref{arafat}, we get
\begin{align*}
    \|h_{k,j}-f_j\|_p&=\bigg(
    \sum_{a\in F}\frac{|g(a)|^{ps_j}}{|\lambda(a\to u_{a,k})|^{ps_j}} \bigg)^{1/p}\xrightarrow{k\to\infty} 0.
\end{align*}
Hence, if $k$ is big enough, $h_{k,j}\in U_j$ for all $j=1,\dots,d$. This completes the proof.
\end{proof}
As a consequence of the previous theorem combined with Theorem \ref{carac-l1}, we deduce the following corollary, which states that weighted backward shifts on $\ell^1$-spaces of rooted directed trees support hypercyclic algebras whenever they are hypercyclic.
\begin{corollary}\label{hc_alg_l1_rooted}
    Let $A$ be a rooted directed tree and let $\lambda=(\lambda_u)_{u\in A}$ be a weight such that the induced weighted backward shift $B_\lambda$ is bounded on $\ell^1(A)$. The following assumptions are equivalent.
\begin{enumerate}[$(i)$]
    \item $B_\lambda$ is hypercyclic.
    \item $B_\lambda$ supports a hypercyclic algebra.
    \item $B_\lambda$ supports a dense, countably generated, free hypercyclic algebra.
\end{enumerate}
\end{corollary}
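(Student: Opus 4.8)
The plan is to deduce Corollary \ref{hc_alg_l1_rooted} by stitching together the $p=1$ case of Theorem \ref{cara:lp:hc-alg-rooted} with the hypercyclicity characterization of Theorem \ref{carac-l1}. The key observation is that for $p=1$ the seemingly technical condition $(iv)$ of Theorem \ref{cara:lp:hc-alg-rooted} coincides \emph{verbatim} with the hypercyclicity criterion furnished by Theorem \ref{carac-l1}: both assert the existence of an increasing sequence $(n_k)_{k\geq 1}$ of positive integers with $\sup_{u\in\Chi^{n_k}(v)}|\lambda(v\to u)|\to+\infty$ for every $v\in A$. This is the happy coincidence that makes the $\ell^1$ case so clean, and it is what forces hypercyclicity and algebrability to be equivalent here, in contrast to the genuinely restrictive $m\geq p$ condition one sees for $p>1$.

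Concretely, I would argue as follows. The implications $(iii)\Rightarrow(ii)\Rightarrow(i)$ are trivial from the definitions (a dense, countably generated, free hypercyclic algebra is in particular a hypercyclic algebra, and the existence of a hypercyclic algebra certainly makes $B_\lambda$ hypercyclic, since any nonzero element of such an algebra is a hypercyclic vector). For the reverse direction, assume $(i)$, that $B_\lambda$ is hypercyclic on $\ell^1(A)$. By Theorem \ref{carac-l1} (the $X=\ell^1(A)$ case), there is an increasing sequence $(n_k)_{k\geq1}$ of positive integers such that $\sup_{u\in\Chi^{n_k}(v)}|\lambda(v\to u)|\to+\infty$ for every $v\in A$. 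But this is precisely condition $(iv)$ of Theorem \ref{cara:lp:hc-alg-rooted} specialized to $p=1$. Applying the equivalence $(iv)\Rightarrow(i)$ of that theorem (again with $p=1$), we conclude that $B_\lambda$ supports a dense, countably generated, free hypercyclic algebra, which is statement $(iii)$ in the numbering of the corollary. This closes the cycle.

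There is essentially no main obstacle here; the corollary is a genuine corollary in the strict sense, obtained by matching two characterizations that happen to produce the identical spectral-type condition when $p=1$. The only points that deserve a word of care are bookkeeping ones: I must make sure the labels of the three conditions in the corollary are tracked correctly against the four conditions in the theorem (the corollary's $(i),(ii),(iii)$ correspond, after reindexing, to the theorem's hypercyclicity, hypercyclic algebra, and dense-free-algebra statements respectively, with condition $(iv)$ of the theorem serving as the bridge), and I should note explicitly that the $m\geq p$ restriction in condition $(iii)$ of the theorem is vacuous when $p=1$ since any $m\geq 1$ works. No new estimates, no new constructions, and no appeal to the general criterion of Theorem \ref{thm:generalcriterion} are needed beyond what is already packaged inside Theorem \ref{cara:lp:hc-alg-rooted}.
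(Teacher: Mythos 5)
Your proposal is correct and is exactly the paper's argument: the paper derives this corollary by observing that condition $(iv)$ of Theorem \ref{cara:lp:hc-alg-rooted} coincides with the hypercyclicity characterization of Theorem \ref{carac-l1} for $X=\ell^1(A)$, so the cycle of implications closes just as you describe. Nothing further is needed.
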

Notice that condition $(iv)$ in Theorem \ref{cara:lp:hc-alg-rooted}, being equivalent to the hypercyclicity of $B_\lambda$ in $\ell^1(A)$, implies that, for $B_\lambda$ to admit a hypercyclic algebra on $\ell^p(A), 1<p<+\infty,$ it is required that $B_\lambda$ is hypercyclic on $\ell^1(A)$. This will be further discussed in the next section. 

We will now establish the same equivalences as the previous corollary for weighted backward shift operators on the $c_0$-space of rooted directed trees. Differently than the previous case of $\ell^p$-spaces, now we make use of \eqref{key-lemma} for constructing the right inverse operator of backward shifts.

\begin{theorem}\label{hc_alg_c0_rooted}
Let $A$ be a rooted directed tree and let $\lambda=(\lambda_u)_{u\in A}$ be a weight such that the induced backward shift $B_\lambda$ is bounded on $c_0(A)$. The following assumptions are equivalent.
\begin{enumerate}[$(i)$]
    \item $B_\lambda$ is hypercyclic.
    \item $B_\lambda$ supports a hypercyclic algebra.
    \item $B_\lambda$ supports a dense, countably generated, free hypercyclic algebra.
\end{enumerate}
\end{theorem}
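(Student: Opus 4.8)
The plan is to prove Theorem \ref{hc_alg_c0_rooted} by mirroring the structure of the $\ell^p$ case in Theorem \ref{cara:lp:hc-alg-rooted}, exploiting the fact that for $c_0(A)$ the hypercyclicity criterion (Theorem \ref{carac-c0}) and the condition needed for a hypercyclic algebra coincide. The implications $(iii)\Rightarrow(ii)\Rightarrow(i)$ are trivial, so the real content is $(i)\Rightarrow(iii)$, and by Theorem \ref{carac-c0} hypercyclicity on $c_0(A)$ is exactly the statement that there is an increasing sequence $(n_k)_{k\geq1}$ with $\sum_{u\in\Chi^{n_k}(v)}|\lambda(v\to u)|\to+\infty$ for every $v\in A$. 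The heart of the matter is therefore to verify the hypotheses of the general criterion, Theorem \ref{thm:generalcriterion}, for every $d\geq1$, starting from this summability divergence.

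First I would set up the criterion exactly as in the $\ell^p$ proof: fix $d\geq1$, a finite non-empty $P\subset\NN_0^d\setminus\{(0,\dots,0)\}$, and open sets $U_1,\dots,U_d,V,W$ with $0\in W$; pick $(f_1,\dots,f_d)\in U_1\times\cdots\times U_d$ and $g\in V$ with finite support $F$; and invoke Lemma \ref{lemma:maps} to obtain $s\in(0,+\infty)^d$ and $\beta\in P$ with $1=L_\beta(s)<L_\alpha(s)$ for all $\alpha\neq\beta$. The key difference from the $\ell^p$ argument lies in constructing the approximate right inverse. Rather than selecting a single child $u_{a,k}\in\Chi^{n_k}(a)$ realizing the supremum, I would use the infimum formula \eqref{key-lemma} from \cite[Lemma 4.2]{Karl1}: for each $a\in F$ and each $k$, the divergence $\sum_{u\in\Chi^{n_k}(a)}|\lambda(a\to u)|\to+\infty$ means $\big(\sum_{u\in\Chi^{n_k}(a)}|\lambda(a\to u)|\big)^{-1}\to0$, so by \eqref{key-lemma} I can choose a finitely-supported $x^{(a,k)}$ on $\Chi^{n_k}(a)$ with $\|x^{(a,k)}\|_1=1$ and $\sup_{u}|x^{(a,k)}_u\,\lambda(a\to u)|$ as small as we like, say bounded by some $\eta_k\to0$. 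Defining $g_{a,k}=\sum_{u\in\Chi^{n_k}(a)}x^{(a,k)}_u e_u$, a direct computation gives $B_\lambda^{n_k}g_{a,k}=e_a$, so $g_{a,k}$ is an exact preimage of $e_a$ under $B_\lambda^{n_k}$, while $\|g_{a,k}\|_\infty\leq\eta_k/\min_u|\lambda(a\to u)|$ can be controlled.

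Then I would set $h_{k,j}=f_j+\sum_{a\in F}(g(a)g_{a,k})^{s_j}$ exactly as before; since the supports of distinct $g_{a,k}$ lie in $\Chi^{n_k}(a)$ and, for $k$ large, $\Chi^{n_k}(A)\cap F=\varnothing$, the product $h_k^\alpha$ splits cleanly into $f^\alpha$ plus the cross terms $\sum_{a\in F}(g(a)g_{a,k})^{L_\alpha(s)}$. Applying $B_\lambda^{n_k}$ and using $B_\lambda^{n_k}g_{a,k}=e_a$ together with the support disjointness, the case $\alpha=\beta$ yields exactly $g\in V$, while for $\alpha\neq\beta$ the exponent $L_\alpha(s)-1>0$ forces the relevant sup-norm to tend to $0$, placing $B_\lambda^{n_k}h_k^\alpha$ in $W$; similarly $\|h_{k,j}-f_j\|_\infty\to0$ puts $h_{k,j}\in U_j$. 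The main obstacle I anticipate is bookkeeping the powers: unlike the $\ell^p$ case where $g_{a,k}$ is supported on a single vertex, here $g_{a,k}$ may be supported on several vertices, so I must check that $(g_{a,k})^{L_\alpha(s)}$ and $B_\lambda^{n_k}(g_{a,k})^{L_\alpha(s)}$ behave as expected — in particular that the coordinatewise powers interact correctly with $B_\lambda^{n_k}$. To keep this manageable, it is cleanest to choose each $x^{(a,k)}$ supported on a \emph{single} vertex $u_{a,k}\in\Chi^{n_k}(a)$ after all, selecting $u_{a,k}$ so that $|\lambda(a\to u_{a,k})|$ is large enough (possible since a divergent sum of positive terms must have terms whose partial structure forces at least one large reciprocal bound, or more safely by noting the $\ell^1$-to-$c_0$ passage), thereby reducing the computation to the single-vertex algebra exactly as in Theorem \ref{cara:lp:hc-alg-rooted} and making the power manipulations identical to that proof.
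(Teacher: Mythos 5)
Your initial instinct --- building a multi-vertex approximate right inverse from \eqref{key-lemma} --- is exactly the right route, but you abandon it at the end in favor of a reduction that does not work, and this is a genuine gap. You propose to take $x^{(a,k)}$ supported on a single vertex $u_{a,k}\in\Chi^{n_k}(a)$ with $|\lambda(a\to u_{a,k})|$ large, arguing that ``a divergent sum of positive terms must have terms whose partial structure forces at least one large reciprocal bound.'' This is false: the $c_0$ hypercyclicity condition is $\sum_{u\in\Chi^{n_k}(v)}|\lambda(v\to u)|\to+\infty$, and a divergent sum can have every individual term tending to $0$ (take $n^2$ terms each equal to $1/n$). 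Divergence of the sum is strictly weaker than divergence of the supremum, which is the $\ell^1$ condition; if the reduction you suggest were available, the $c_0$ statement would follow from Theorem \ref{cara:lp:hc-alg-rooted} with $p=1$, and there would be no need to treat $c_0(A)$ separately at all. The single-vertex algebra of the $\ell^p$ proof is therefore unavailable here, and the ``bookkeeping of powers'' you hoped to sidestep is precisely the content of the $c_0$ argument.

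There are also two slips in the multi-vertex setup itself. First, \eqref{key-lemma} must be applied with $\mu_u=1/\lambda(v\to u)$, so that $\sum_u 1/|\mu_u|=\sum_u|\lambda(v\to u)|\to\infty$; what you can then make small is $\sup_u|x_u|/|\lambda(v\to u)|$, not $\sup_u|x_u\,\lambda(v\to u)|$ as written. Correspondingly, the right inverse has to be $R_{v,k}=\sum_u\frac{|x_u|}{\lambda(v\to u)}e_u$, whose image under $B_\lambda^{n_k}$ is $\bigl(\sum_u|x_u|\bigr)e_v=e_v$ by the normalization $\|x\|_1=1$; with your coefficients $x_u$ one only gets $\bigl(\sum_u x_u\lambda(v\to u)\bigr)e_v$, which need not equal $e_v$. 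Second, the estimate for $\alpha\neq\beta$ that you flag as an obstacle is resolved by writing $\frac{|x_u|^{L_\alpha(s)}}{|\lambda(v\to u)|^{L_\alpha(s)-1}}=|x_u|\cdot\bigl(\frac{|x_u|}{|\lambda(v\to u)|}\bigr)^{L_\alpha(s)-1}$ and summing over $u$: the factor $\sum_u|x_u|=1$ is harmless and the remaining supremum tends to $0$ because $L_\alpha(s)-1>0$. With these corrections your first construction is exactly the paper's proof; without them, and with the single-vertex fallback, the argument is incomplete.
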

\begin{proof}
The implications $(iii)\Rightarrow(ii)\Rightarrow(i)$ are immediate. In order to prove $(i)\Rightarrow(iii)$, let us suppose that $B_\lambda$ is hypercyclic. Once more, we aim to apply Theorem \ref{thm:generalcriterion}. The initialization is the same as in the case of $\ell^p(A)$ (see the first paragraph in the proof of the implication $(iv)\Rightarrow (i)$ of Theorem \ref{cara:lp:hc-alg-rooted}). Now, since $B_\lambda$ is hypercyclic, from Theorem \ref{carac-c0}, there is an increasing sequence of positive entire numbers $(n_k)_{k\geq1}$ such that, for all $v\in A$,
\[\sum_{u\in \Chi^{n_k}(v)}\big|\lambda(v\to u)\big|\xrightarrow{k\to+\infty}+\infty.\]
For each $v\in A$ and $k\in \NN$, we apply  \eqref{key-lemma} with $J=\Chi^{n_k}(v)$ and we get the existence of $g_{v,k}:A\to \KK$ with finite support in $\Chi^{n_k}(v)$ satisfying
\begin{align}\label{champignon}
\|g_{v,k}\|_1=1\quad\text{and}\quad\sup_{u\in\Chi^{n_k}(v)}\frac{|g_{v,k}(u)|}{|\lambda(v\to u)|}\xrightarrow{k\to+\infty}0.
\end{align}
For each $v\in A$ and $k\in\NN$, we define \[R_{v,k} = \sum_{u\in\Chi^{n_k}(v)} \frac{|g_{v,k}(u)|}{\lambda(v\to u)}e_u\]
and, by using \eqref{champignon}, we notice that
\begin{align*}
    B_{\lambda}^{n_k}R_{v,k}
    &=\sum_{u\in\Chi^{n_k}(v)} \frac{|g_{v,k}(u)|}{\lambda(v\to u)}B_{\lambda}^{n_k}e_u\\
    &=\sum_{u\in\Chi^{n_k}(v)} \frac{|g_{v,k}(u)|}{\lambda(v\to u)}\lambda(v\to u)e_v\\
    &=\sum_{u\in\Chi^{n_k}(v)} |g_{v,k}(u)|e_v=e_v.
\end{align*}
Hence, choosing $k\in\NN$ big enough so that $\Chi^{n_k}(A)\cap F=\varnothing $, we define $h_k=(h_{k,1},\ldots,h_{k,d})$ with
\[h_{k,j}=f_j+\sum_{a\in F}g(a)^{s_j}R_{a,k}^{s_j}, \quad j=1,\dots,d,\]
and we get
\begin{align*}
    B_{\lambda}^{n_k}h_k^{\beta}= g\in V.
\end{align*}
We also have, for each $a\in F$ and $j=1,\dots, d$,
\begin{align*}
    \|R_{a,k}^{s_j}\|_{\infty} &= \sup_{v\in A} |R_{a,k}^{s_j}(v)| \\
    &= \sup_{v\in \Chi^{n_k}(a)} |R_{a,k}^{s_j}(v)|\\
    &= \bigg(\sup_{v\in \Chi^{n_k}(a)}\frac{|g_{a,k}(v)|}{|\lambda(a\to v)|}\bigg)^{s_j}\xrightarrow{k\to +\infty}0,
\end{align*}
where the last limit comes from \eqref{champignon}. Thus, $h_k\in U$ if $k$ is big enough. Now, we consider $\alpha \in P\backslash\{\beta\}$ and $k\in\NN$, and we calculate
\begin{align*}
\|B_{\lambda}^{n_k}h_k^{\alpha}\|_{\infty}
&= \bigg\|\sum_{a\in F}g(a)^{L_\alpha(s)}B_\lambda^{n_k} R_{a,k}^{L_\alpha(s)}\bigg\|_{\infty}\\
&\leq\max_{a\in F}\sum_{u\in \Chi^{n_k}(a)}|g(a)|^{L_\alpha(s)}\frac{|g_{a,k}(u)|^{L_\alpha(s)}}{|\lambda(a\to u)|^{L_\alpha(s)-1}}\\
&= \max_{a\in F}\sum_{u\in \Chi^{n_k}(a)}|g(a)|^{L_\alpha(s)}|g_{a,k}(u)|\bigg(\frac{|g_{a,k}(u)|}{|\lambda(a\to u)|}\bigg)^{L_\alpha(s)-1}\\
&\leq \max_{a\in F}|g(a)|^{L_\alpha(s)}\sup_{u\in\Chi^{n_k}(a)}\bigg(\frac{|g_{a,k}(u)|}{|\lambda(a\to u)|}\bigg)^{L_\alpha(s)-1}\sum_{u\in \Chi^{n_k}(a)}|g_{a,k}(u)|\\
&=\max_{a\in F}|g(a)|^{L_\alpha(s)}\sup_{u\in\Chi^{n_k}(a)}\bigg(\frac{|g_{a,k}(u)|}{|\lambda(a\to u)|}\bigg)^{L_\alpha(s)-1}\xrightarrow{k\to+\infty}0,
\end{align*}
where the last limit follows again from \eqref{champignon}. Therefore, if $k$ is big enough, we have $B_{\lambda}^{n_k}h_k^{\alpha}\in W$ for all $\alpha\in P\backslash\{\beta\}$. From Theorem \ref{thm:generalcriterion}, $B_\lambda$ has a dense, countably generated, free hypercyclic algebra on $c_0(A)$.
\end{proof}

\subsection{Non-existence of hypercyclic algebras on $\ell^p(A), 1<p<+\infty$}

As we saw in the last subsection, weighted backward shifts on the spaces $\ell^1$ and $c_0$ support hypercyclic algebras whenever they are hypercyclic. However, this equivalence does not hold on $\ell^p$-spaces with $1<p<\infty$, for there are trees $A$ supporting shifts that are hypercyclic on $\ell^p(A)$ but not in $\ell^1(A)$. Simple examples are Rolewicz operators on rooted directed trees. Indeed, for the classical family of Rolewicz operators $\lambda B := B_{(\lambda)_{v\in V}}$, where $\lambda \in \KK \setminus \{0\}$, by using Theorem \ref{cara:lp:hc-alg-rooted} we obtain the following readable criterion.

\begin{corollary}
A Rolewicz operator $\lambda B$ on an $\ell^p$-space, with $1\leq p<\infty$, of a rooted directed tree supports a hypercyclic algebra if, and only if, $|\lambda|>1$.
\end{corollary}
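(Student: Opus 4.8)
The plan is to apply Theorem~\ref{cara:lp:hc-alg-rooted}, whose equivalence $(ii)\Leftrightarrow(iv)$ reduces the question to checking the single growth condition $(iv)$ for the constant weight $\lambda_u=\lambda$. Thus the whole argument amounts to evaluating condition $(iv)$ in this special case and seeing that it simplifies to $|\lambda|>1$.

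First I would compute the products $\lambda(v\to u)$. By definition $\lambda(v\to u)=\prod_{k=0}^{n-1}\lambda_{\Par^k(u)}$ for $u\in\Chi^n(v)$, and since every weight equals $\lambda$ this collapses to $\lambda(v\to u)=\lambda^n$ for every $u\in\Chi^n(v)$, independently of the particular vertices $v$ and $u$. Next I would invoke the standing leafless assumption: every vertex has at least one child, so $\Chi^n(v)\neq\varnothing$ for all $n\geq1$ and all $v\in A$. Consequently, for any candidate sequence $(n_k)$,
\[\sup_{u\in\Chi^{n_k}(v)}|\lambda(v\to u)|=|\lambda|^{n_k},\]
a quantity that does not depend on $v$.

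Therefore condition $(iv)$ of Theorem~\ref{cara:lp:hc-alg-rooted} holds if and only if there is a sequence $(n_k)$ with $|\lambda|^{n_k}\to+\infty$, which is exactly the statement $|\lambda|>1$: when $|\lambda|>1$ one may take $n_k=k$, whereas $|\lambda|\leq1$ forces $|\lambda|^n\leq1$ for all $n$, ruling out the limit. Combining this with the equivalence furnished by Theorem~\ref{cara:lp:hc-alg-rooted} yields the claimed criterion, uniformly over $1\leq p<\infty$.

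I expect no real obstacle here; the only point deserving care is that the reduction is valid for every $p\in[1,\infty)$ simultaneously, which is guaranteed precisely because condition $(iv)$ in Theorem~\ref{cara:lp:hc-alg-rooted} is the single $\sup$-criterion shared by all these spaces, and because the constant weight renders the supremum independent of the base vertex $v$. (Implicit throughout, as in the hypotheses of Theorem~\ref{cara:lp:hc-alg-rooted}, is that $\lambda B$ is a bounded operator on the space under consideration.)
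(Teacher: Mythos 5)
Your proposal is correct and is exactly the argument the paper intends: the corollary is stated as an immediate consequence of Theorem~\ref{cara:lp:hc-alg-rooted}, and since the constant weight gives $\lambda(v\to u)=\lambda^{n}$ for every $u\in\Chi^{n}(v)$ (nonempty by leaflessness), condition $(iv)$ reduces to $|\lambda|^{n_k}\to+\infty$, i.e.\ $|\lambda|>1$. No gaps; your remark that the criterion is uniform in $p$ because $(iv)$ is the same $\sup$-condition for all $1\leq p<\infty$ matches the paper's reading.
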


Notice that, according to Proposition \ref{boundedness}, Rolewicz operators are bounded on $\ell^p(A)$ for $1 < p < \infty$ whenever $\sup_{v\in A}|\Chi(v)| < \infty$, where $|\Chi(v)|$ denotes the cardinality of the set $\Chi(v)$.

The previous corollary provides, by itself, many examples of hypercyclic operators without hypercyclic algebras on $\ell^p$-spaces where $p$ is greater than 1. In fact, let $N>1$ in $\NN$ and let $A$ be the $N$-adic rooted tree, that is, each vertex in $A$ has exactly $N$ children. Then $\lambda B$ is hypercyclic (even mixing) on $\ell^p(A)$ if and only if $|\lambda| > N^{-1/p*}$ (see \cite[Corollary 4.5]{Karl1}). These operators are also chaotic (\cite[Theorem 9.4]{Karl2}). Another counter example in the dyadic rooted tree can be obtained as follows.
\begin{example}
Let $1<p<+\infty$ and let $A$ be the dyadic rooted tree. Then there are $m_0\in\NN$ and a hypercyclic (even mixing) weighted backward shift $B_\lambda$ on $\ell^p(A)$ with no hypercyclic algebra, and which satisfies the stronger property \[\sup_{n\geq 1} \sum_{u\in\Chi^n(v)}|\lambda(v\to u)|^{m/p}<+\infty\]
for all integers $m\geq m_0$.
\end{example}
\begin{figure}[H]
    \centering
    \includegraphics[width=0.4\textwidth]{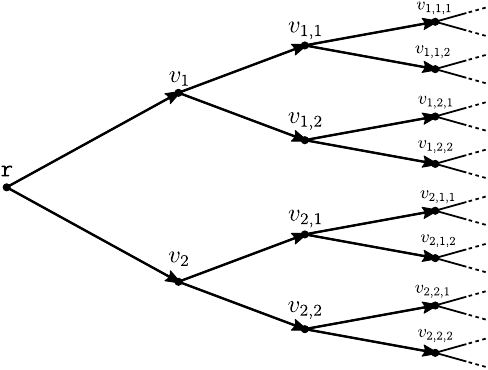}
    \caption{Lexicographical order on the dyadic partition}
    \label{fig:dyadic}
\end{figure}
\begin{proof}
We note $I_2=\{1,2\}$ and we define $\Chi(\root)=\{v_1, v_2\}$ and, for any $n\in\NN$ and any ${\bf i}\in I_2^n$, $\Chi(v_{\bf i})=\{v_{{\bf i},1}, v_{{\bf i},2}\}$ (see Figure \ref{fig:dyadic}). For each $n\in\NN$, let $\theta_n({\bf i})$ denote the position of the index ${\bf i}\in I_2^n$ in the lexicographical order. Since $p>1$, we can fix $m_0\in \NN$ such that $\frac{p}{m_0}<\frac{p-1}{p}$. We choose $\alpha\in(\frac{p}{m_0}, \frac{p-1}{p})$ and we define
\[
\lambda_{v_{\bf i}}=
\frac{1}{\theta_n({\bf i})^{\alpha}\lambda(\root\to \Par({v_{\bf i}}))}, 
\quad \forall n\in\NN\text{ and } {\bf i}\in I_2^n.
\]
We first verify that the induced weighted backward shift $B_\lambda$ is bounded on $\ell^p(A)$. In fact, for each $v\in A$, say $v=v_{\bf i}$ for some $n\in \NN$ and ${\bf i}\in I_2^n$, we have
\begin{align}
\sum_{u\in \Chi(v)}|\lambda_u|^{p^*}&=|\lambda_{v_{{\bf i},1}}|^{p^*}+|\lambda_{v_{{\bf i},2}}|^{p^*} \nonumber \\
&= \frac{1}{\theta_{n+1}({\bf i},1)^{\alpha p^*}\lambda(\root\to {v_{\bf i}})^{p^*}}+\frac{1}{\theta_{n+1}({\bf i},2)^{\alpha p^*}\lambda(\root\to {v_{\bf i}})^{p^*}}  \nonumber \\
&= \label{halal} \frac{1}{\big(\theta_{n+1}({\bf i},2)-1\big)^{\alpha p^*}\lambda(\root\to {v_{\bf i}})^{p^*}}+\frac{1}{\theta_{n+1}({\bf i},2)^{\alpha p^*}\lambda(\root\to {v_{\bf i}})^{p^*}}
\end{align} 
We note that, for all $n\in \NN$ and ${\bf i}\in I_2^n$,
\begin{align*}
\lambda(\root\to v_{\bf i}) 
&=\lambda(\root\to \Par(v_{\bf i}))\lambda_{v_{\bf i}} \\
&= \lambda(\root\to \Par(v_{\bf i}))\frac{1}{\theta_n({\bf i})^{\alpha}\lambda(\root\to \Par({v_{\bf i}}))}\\
&=\frac{1}{\theta_n({\bf i})^\alpha}.
\end{align*}
From the definition of the lexicographical order we see that
\[\lambda(\root\to v_{\bf i})=\frac{1}{\theta_n({\bf i})^\alpha} = \bigg(\frac{2}{\theta_{n+1}({\bf i},2)}\bigg)^\alpha.\]
Therefore, from \eqref{halal},
\begin{align*}
\sum_{u\in \Chi(v)}|\lambda_u|^{p^*}
= \bigg(\frac{\theta_{n+1}({\bf i},2)}{2\big(\theta_{n+1}({\bf i},2)-1\big)}\bigg)^{\alpha p^*}+\bigg(\frac{\theta_{n+1}({\bf i},2)}{2\theta_{n+1}({\bf i},2)}\bigg)^{\alpha p^*}
\leq 1+\bigg(\frac{1}{2}\bigg)^{\alpha p^*},
\end{align*} 
thus \[\sup_{v\in A} \sum_{u\in \Chi(v)}|\lambda_u|^{p^*} < +\infty.\]
Hence,  by Proposition \ref{boundedness}, $B_\lambda$ is bounded on $\ell^p(A)$.

Now, we see that, for all $N\in\NN$ and all $v_{\bf i}\in A$, with $n\in \NN$ and ${\bf i}\in I_2^n$, we have $\Chi^N(v_{\bf i})=\{v_{{\bf i},{\bf j}} : {\bf j}\in I_2^{N}\}$. Hence,
\begin{align*}
\sum_{u\in \Chi^{N}(v_{\bf i})} |\lambda(v_{\bf i}\to u)|^{p^*} 
    &= \sum_{{\bf j}\in I_2^{N}} \bigg(\frac{\lambda(\root\to v_{{\bf i},{\bf j}})}{\lambda(\root\to v_{\bf i})}\bigg)^{p^*}\\
    &=\sum_{{\bf j}\in I_2^{N}} \bigg(\frac{\theta_n({\bf i})}{\theta_{n+N}({\bf i},{\bf j})}\bigg)^{\alpha p^*} \\
    &= \sum_{k=1}^{2^N} \bigg(\frac{\theta_n({\bf i})}{2^N(\theta_n({\bf i})-1)+k}\bigg)^{\alpha p^*}\\
    &\geq 2^N\bigg(\frac{\theta_n({\bf i})}{2^N(\theta_n({\bf i})-1)+2^N}\bigg)^{\alpha p^*}\\
    &= 2^{N(1-\alpha p^*)} \xrightarrow{N\to +\infty}+\infty.
\end{align*}
(One can check the third equality through a simple induction on $N$). From Theorem \ref{carac-lp}, we conclude that $B_\lambda$ is hypercyclic. On the other hand, since $\lambda_v\leq 1$ for all $v\in A$, as soon as $m\geq m_0$ we have
\begin{align*}
\sum_{u\in \Chi^{N}(v_{\bf i})} |\lambda(v_{\bf i}\to u)|^{\frac{m}{p}} 
    &\leq \sum_{u\in \Chi^{N}(v_{\bf i})} |\lambda(v_{\bf i}\to u)|^{\frac{m_0}{p}}\\
    &= \sum_{k=1}^{2^N} \bigg(\frac{\theta_n({\bf i})}{2^N(\theta_n({\bf i})-1)+k}\bigg)^{\frac{\alpha m_0}{p}}\\
    &\leq \sum_{k=1}^{2^N} \bigg(\frac{\theta_n({\bf i})}{k(\theta_n({\bf i})-1)+k}\bigg)^{\frac{\alpha m_0}{p}}\\
    &\leq \sum_{k=1}^{+\infty} \bigg(\frac{1}{k}\bigg)^{\frac{\alpha m_0}{p}}.
\end{align*}
Therefore, for all $v\in A$,
\[\sup_{N\geq 1} \sum_{u\in \Chi^{N}(v)} |\lambda(v\to u)|^{\frac{m}{p}} <+\infty\] as we wanted. In particular, it follows from Theorem \ref{cara:lp:hc-alg-rooted} that $B_\lambda$ does not have a hypercyclic algebra.
\end{proof}

The previous example shows that having a hypercyclic algebra on $\ell^p$-spaces of $N$-adic trees is rather ``sensible'' to powers. The stronger conclusion is almost the condition for hypercyclicity on $\ell^p$, but the power ruins the existence of algebras.

Despite this difficulty, we can still prove the equivalence between being hypercyclic and having a hypercyclic algebra on a big class of weighted backward shifts on $\ell^p$-spaces of a particular tree.

\begin{figure}[H]
    \centering
    \includegraphics[width=8cm]{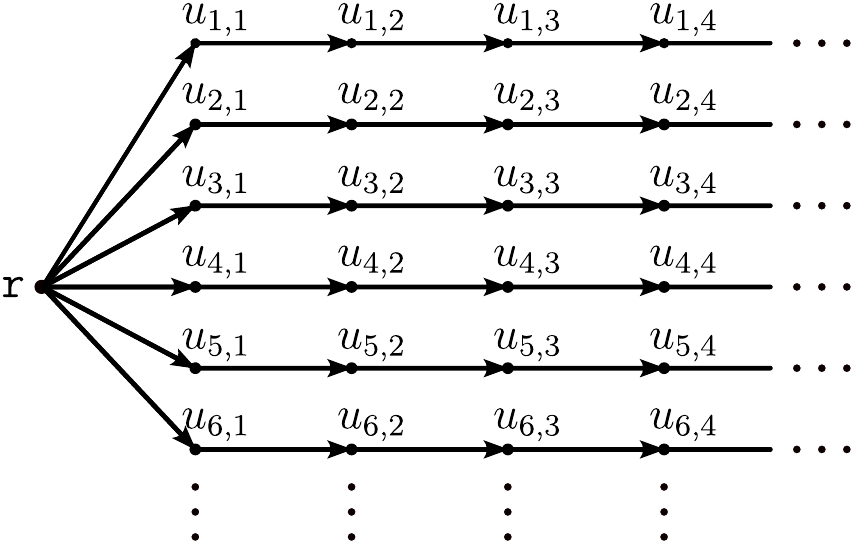}
    \caption{Directed tree in Example \ref{menthe}}
    \label{fig:menthe}
\end{figure}
\begin{example}\label{menthe}
    Let us consider the rooted tree $A$, represented in Figure \ref{fig:menthe} and defined as follows. 
\[\begin{cases}
    \Chi(\root)=\{u_{i,1} : i\in \NN\},\\
    \Chi({u_{i,j}})=\{u_{i,j+1}\}, \ \forall i, j\in\NN.
\end{cases}\]
We consider a weight $\lambda=(\lambda_u)_{u\in A}$ such that $B_\lambda$ is bounded on $\ell^p(A), 1<p<+\infty,$ and satisfying 
\[\lambda(\root\to u_{i,j}) = \alpha_i\beta_j,\]
where $(\alpha_i)_i$ and $(\beta_j)_j$ are sequences of complex scalars. Then $B_\lambda$ is hypercyclic on $\ell^p(A)$ if and only if it has a hypercyclic algebra.\end{example}
\begin{proof}
The reciprocal is trivial. Let us suppose $B_\lambda$ is hypercyclic.  From Theorem \ref{carac-lp}, there exists an increasing sequence of positive integers $(n_k)_{k\in\NN}$ such that, for every $v\in A$,
\begin{equation}\label{raisin-sec}
    \sum_{u\in \Chi^{n_k}(v)}\big|\lambda(v\to u)\big|^{p^*} \xrightarrow{k\to+\infty}+\infty.
\end{equation}
By applying this condition to $v=\root$, we get
\begin{align*}
    |\beta_{n_k}|^{p^*}\sum_{i=1}^{+\infty}|\alpha_{i}|^{p^*}
    &=\sum_{i=1}^{+\infty}\big|\lambda(\root\to u_{i,n_k})\big|^{p^*}\\
    &=\sum_{u\in \Chi^{n_k}(\root)}\big|\lambda(\root\to u)\big|^{p^*} \xrightarrow{k\to+\infty}+\infty.
\end{align*}
From the continuity of $B_\lambda$, we see that $(\alpha_i)_i\in \ell^{p^*}(\NN)$. In fact, we might have
\[|\beta_1|^{p^*}\sum_{i=1}^{+\infty}|\alpha_i|^{p^*}=\sum_{i=1}^{+\infty} |\lambda_{u_{i,1}}|^{p^*}=\sum_{u\in\Chi(\root)}|\lambda_{u}|^{p^*}<+\infty.
\]
Hence,
\begin{equation}\label{uva-r}
|\beta_{n_k}|\xrightarrow{k\to+\infty}+\infty.
\end{equation}
Similarly, by applying \eqref{raisin-sec} to $v=u_{i,j}$, for some $i,j\in\NN$, we get
\begin{equation}\label{uva-ij}
\big|\lambda(u_{i,j}\to u_{i,j+n_k})\big|^{p^*}
    =\sum_{u\in \Chi^{n_k}(v)}\big|\lambda(v\to u)\big|^{p^*} \xrightarrow{k\to+\infty}+\infty.
\end{equation}
It is now easy to see that conditions \eqref{uva-r} and \eqref{uva-ij} allow us to verify the condition (iv) in Theorem \ref{cara:lp:hc-alg-rooted}.
\end{proof}

\section{Shifts on unrooted directed trees}\label{sec:unrooted}

In this section we deal with shifts $B_\lambda$ on unrooted directed trees. As expected, and just like with hypercyclicity, it is much more difficult to deal with this case when looking for hypercyclic algebras. Before proceeding to our first main result, let us discuss the set of generators of a hypercyclic algebra.

It is a well known fact that, whenever an operator acting on an $F$-space is hypercyclic, then the set of hypercyclic vectors is a dense $G_\delta$-set, what can be easily proven through a Baire argument. However, it is not known whether an analogous statement is true for hypercyclic algebras or not. In fact, the best known technique to produce hypercyclic algebras also uses a Baire argument (see Theorem \ref{thm:generalcriterion}), thus giving a dense $G_\delta$-set of elements generating them. It is not known whether this is always the case or not. Therefore, in the current state of development of the theory, it is natural to assume the existence of a dense set of vectors generating hypercyclic algebras whenever one needs such hypothesis. This can be particularly useful when trying to obtain necessary conditions for the existence of hypercyclic algebras. The existence of a dense hypercyclic algebra is an even stronger assumption, but will be sometimes used for making statements more simple and pleasing to read.

The following theorem gives a necessary condition for weighted backward shifts on $\ell^p$-spaces of unrooted directed trees to support dense sets of generators of hypercyclic algebras.
\begin{theorem}\label{thm:necessary:unrooted}
    Let $A$ be an unrooted directed tree and $\lambda=(\lambda_v)_{v\in A}$ be a weight such that its induced backward shift $B_\lambda:\ell^p(A)\to \ell^p(A)$ is bounded, with $1\leq p<\infty$. Consider the following assertions.
    \begin{enumerate}[$(i)$]
        \item $B_\lambda$ supports  a dense hypercyclic algebra. 
        \item There is a dense set of elements $f\in \ell^p(A)$ generating a hypercyclic algebra for $B_\lambda$.
        \item There is a residual set of elements $f\in \ell^p(A)$ such that $f^m\in HC(B_\lambda)$ for all $m\geq 1$.
        \item There is a dense set of elements $f\in\ell^p(A)$ such that $f^m\in  HC(B_\lambda)$ for some integer $m \geq p$.
        \item There is a sequence of non-negative integers $(n_k)_{k\geq1}$ such that, for every $v\in A$,
    \begin{align*}
        \sup_{u\in \Chi^{n_k}(v)}|\lambda(v\to u)| \xrightarrow{k\to+\infty} +\infty,
    \end{align*}
    and
    \[\max\Bigg(\frac{1}{|\lambda(\Par^{n_k}(v)\to v)\big|},\sup_{u\in \Chi^{n_k}(\Par^{n_k}(v))}\frac{\big|\lambda(\Par^{n_k}(u)\to u)\big|}{|\lambda(\Par^{n_k}(v)\to v)\big|}\Bigg)\xrightarrow{k\to+\infty}+\infty.\]
    \end{enumerate}
    Then $(i)\Rightarrow (ii)\Rightarrow (iii)\Leftrightarrow (iv)\Leftrightarrow (v)$.
\end{theorem}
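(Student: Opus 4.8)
The plan is to close the cycle $(i)\Rightarrow(ii)\Rightarrow(iii)\Rightarrow(iv)\Rightarrow(v)\Rightarrow(iii)$, which makes the last three assertions equivalent. Three of these steps are soft. For $(i)\Rightarrow(ii)$: if $\mathcal A$ is a dense hypercyclic algebra, then every nonzero $f\in\mathcal A$ generates a subalgebra contained in $\mathcal A\subseteq HC(B_\lambda)\cup\{0\}$, and $\mathcal A\setminus\{0\}$ is dense, so $(ii)$ holds. For $(iii)\Rightarrow(iv)$: a residual set is dense, and ``for all $m\ge 1$'' trivially gives ``for some integer $m\ge p$'' (take $m=\lceil p\rceil$). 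The implication $(ii)\Rightarrow(iii)$ is exactly the setting of Proposition \ref{prop:hc:powers}: given $m\ge 1$ and open sets $U,V$, pick $f\in U$ from the dense set of generators; then $f^m$ lies in the hypercyclic algebra generated by $f$ and is nonzero (for the coordinatewise product $\supp f^m=\supp f$), so $f^m\in HC(B_\lambda)$ and some iterate $B_\lambda^N f^m$ meets $V$. Proposition \ref{prop:hc:powers} then yields the residual set of $(iii)$.

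For $(v)\Rightarrow(iii)$ I would again invoke Proposition \ref{prop:hc:powers}, verifying its hypothesis for every $m\ge 1$. Fix $m$, a point $f_0\in U$ of finite support $F_0$, and $g\in V$ of finite support $F$. It is convenient to rewrite the second quantity of $(v)$: putting $w=\Par^{n_k}(v)$, every $u\in\Chi^{n_k}(w)$ satisfies $\Par^{n_k}(u)=w$, so that expression equals $\max\big(1,\sup_{u\in\Chi^{n_k}(w)}|\lambda(w\to u)|\big)/|\lambda(w\to v)|$. I would then seek $u=f_0+\sum_{a\in F}c_a e_{u_a}+\sum_{v\in F_0}c'_v e_{u'_v}$ with $u_a\in\Chi^{n_k}(a)$ and $u'_v\in\Chi^{n_k}(\Par^{n_k}(v))$, all the new vertices distinct and disjoint from $F_0$ (automatic for large $k$). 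Since the product is coordinatewise and the supports are disjoint, $u^m$ splits with no cross terms, just as $h_k^\alpha$ does in the proof of Theorem \ref{cara:lp:hc-alg-rooted}, and $B_\lambda^{n_k}e_{u_a}=\lambda(a\to u_a)e_a$, $B_\lambda^{n_k}e_{u'_v}=\lambda(\Par^{n_k}(v)\to u'_v)e_{\Par^{n_k}(v)}$. Setting $c_a^m\lambda(a\to u_a)=g(a)$ reproduces $g$, while $B_\lambda^{n_k}f_0^m=\sum_{v\in F_0}f_0(v)^m\lambda(\Par^{n_k}(v)\to v)e_{\Par^{n_k}(v)}$ is the unwanted \emph{backward residue} peculiar to the unrooted case (in a rooted tree this term vanishes once $n_k$ exceeds the depth of $F_0$). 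The first condition of $(v)$ lets me choose $u_a$ with $|\lambda(a\to u_a)|$ arbitrarily large, so $\|u-f_0\|_p\to 0$; the second condition kills the residue cheaply, since at each $v\in F_0$ either $|\lambda(\Par^{n_k}(v)\to v)|\to 0$ (the residue vanishes by itself) or I pick $u'_v$ maximizing $|\lambda(w\to u'_v)|$ and set $c_v'^{\,m}\lambda(w\to u'_v)=-f_0(v)^m\lambda(w\to v)$, whose $\ell^p$-cost $|c'_v|=|f_0(v)|\,(|\lambda(w\to v)|/|\lambda(w\to u'_v)|)^{1/m}\to 0$. Thus $B_\lambda^{n_k}u^m\to g$ while $u\to f_0$, verifying the hypothesis for each $m$.

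The heart of the theorem is $(iv)\Rightarrow(v)$. The first condition is obtained exactly as in $(iii)\Rightarrow(iv)$ of Theorem \ref{cara:lp:hc-alg-rooted}: rescaling $f$ so that $\|f\|_p$ is small and approximating $\sum_{v\in F}e_v$ by $B_\lambda^{n}f^m$ forces $\sup_{u\in\Chi^{n}(v)}|\lambda(v\to u)|$ to be large on $F$. For the second condition I would exploit the density assumed in $(iv)$: for a fixed vertex $v$, choose $f$ with $f^m\in HC(B_\lambda)$ and $\|f-e_v\|_p$ small, so that $|f(v)|\ge\tfrac12$ while, crucially, $\sum_{u\neq v}|f(u)|^m\le\|f-e_v\|_p^m$ is small (here $m\ge p$ and the embedding $\ell^p\hookrightarrow\ell^m$ are used). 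Since $0$ lies in the closure of the orbit of the hypercyclic vector $f^m$, there are arbitrarily large $n$ with $\|B_\lambda^{n}f^m\|_p$ small; evaluating at the ancestor $w=\Par^{n}(v)$ and isolating the term $u=v$ gives
\[|\lambda(w\to v)|\,|f(v)|^m\le \|B_\lambda^{n}f^m\|_p+\Big(\sup_{u\in\Chi^{n}(w)}|\lambda(w\to u)|\Big)\|f-e_v\|_p^m,\]
whence $|\lambda(w\to v)|$ is small compared with $\max\big(1,\sup_{u\in\Chi^{n}(w)}|\lambda(w\to u)|\big)$ along these times. This is precisely the mechanism behind the $\max$ in $(v)$: either $|\lambda(w\to v)|$ is forced to $0$, or it is dominated by the largest weight among $\Chi^{n}(w)$.

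The main obstacle is packaging these per-vertex facts into a single sequence $(n_k)$ valid simultaneously for every $v$, as $(v)$ demands. By a standard diagonal argument this reduces to proving, for each finite $F$ and each threshold, the existence of arbitrarily large $n$ at which both quantities exceed the threshold for all $v\in F$ at once. The first quantity is unproblematic, since one approximation of $\sum_{v\in F}e_v$ handles all $v\in F$ together. The delicate point is the simultaneous control of the second quantity: using a single $f$ close to $\sum_{v\in F}e_v$ replaces the clean isolation of the term $u=v$ by a near-cancellation $\sum_{u\in F\cap\Chi^{n}(w)}\lambda(w\to u)\approx 0$ among those elements of $F$ sharing the ancestor $w=\Par^{n}(v)$, which does not by itself bound $|\lambda(w\to v)|$. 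Reconciling this—by observing that such coincidences force large sibling weights that feed the $\sup$ term of the $\max$, and by choosing the approximated targets so as to break the cancellation—is where I expect the argument to require the most care, and it is the crux of the proof.
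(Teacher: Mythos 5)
Your overall architecture (the cycle $(i)\Rightarrow(ii)\Rightarrow(iii)\Rightarrow(iv)\Rightarrow(v)\Rightarrow(iii)$) coincides with the paper's, and your treatments of the soft implications and of $(v)\Rightarrow(iii)$ are essentially the paper's arguments. But the proposal has a genuine gap exactly where you flag it: in $(iv)\Rightarrow(v)$ you derive the second condition only per vertex, with a different witness $f$ (close to $e_v$) and different times $n$ (those where $\|B_\lambda^nf^m\|_p$ is small) for each $v$, and you concede that you do not know how to obtain both conditions for all $v$ in a finite set at a common time $n$, nor how to rule out sibling cancellation when several elements of $F$ share the ancestor $\Par^n(v)$. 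Since assertion $(v)$ requires a single sequence $(n_k)$ valid for every $v$, this unresolved step is the heart of the implication, and the proof is incomplete without it. The paper's resolution is precisely the device you gesture at (``choosing the approximated targets so as to break the cancellation''): fix a finite $F$, let $G\subset F$ contain \emph{exactly one vertex per generation}, and use the density in $(iv)$ to find a single $f$ and a single $n$ with $\|f-2\sum_{v\in G}e_v\|_p<\veps$ and $\|B_\lambda^nf^m-\sum_{v\in F}e_v\|_p<\delta$. The first estimate gives the first condition for all $v\in F$ at once. For $v\in G$, since $\Par^n(v)\notin F$, the value $(B_\lambda^nf^m)(\Par^n(v))$ is smaller than $\delta$, while $|f(v)|^m\geq(2-\veps)^m$ and $\|f\chi_{\Chi^n(\Par^n(v))\setminus\{v\}}\|_p<\veps$; so either $|\lambda(\Par^n(v)\to v)|\leq 2\veps$ or the near-cancellation must be produced by the weights, forcing $\sup_u|\lambda(\Par^n(v)\to u)|/|\lambda(\Par^n(v)\to v)|>((2-\veps)^m-\tfrac12)/\veps^m$. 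The one-per-generation choice of $G$ is what removes the sibling-cancellation obstruction (no other child of $\Par^n(v)$ carries a large value of $f$), and the condition is then transported from $v\in G$ to any $v'\in F$ in the same generation via the identity $M_k(v')=\frac{|\lambda(\Par^{m_0}(v)\to v)|}{|\lambda(\Par^{m_0}(v')\to v')|}\,M_k(v)$ for $n_k>m_0$, where $\Par^{m_0}(v)=\Par^{m_0}(v')$. A diagonalization over $F_k\nearrow A$ and $\veps_k\to 0$ then produces $(n_k)$.

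A secondary, fixable issue occurs in your $(v)\Rightarrow(iii)$: you attach one correction vertex $u'_v\in\Chi^{n_k}(\Par^{n_k}(v))$ to each $v$ with $c_v'^{\,m}\lambda(w\to u'_v)=-f_0(v)^m\lambda(w\to v)$, claiming the new vertices are automatically distinct. When two vertices $v\neq v'$ of $F_0$ lie in the same generation they share $w=\Par^{n_k}(v)=\Par^{n_k}(v')$, and the vertex realizing the large ratio in $(v)$ may be the same for both; if $u'_v=u'_{v'}$ the coefficients add \emph{before} the $m$-th power is taken and your cancellation identity fails. The paper handles this by partitioning the vertices of $F_2$ according to the common target $b_{(v,n_k)}$ and placing a single correction term $e^{i\pi/m}\bigl(\sum_{v\in J}f(v)^m\lambda(\Par^{n_k}(v)\to v)/\lambda(\Par^{n_k}(v)\to b)\bigr)^{1/m}e_b$ on each class $J$; you would need the same grouping (or an argument that distinct admissible $u'_v$ can always be chosen, which the hypothesis does not guarantee).
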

\begin{proof}
    It is clear that $(i)\Rightarrow (ii)\Rightarrow (iii)\Rightarrow (iv)$. Let us show that $(iv)\Rightarrow (v)$.  Let $F\subset A$ finite, $N\geq 1$ and $0<\veps<\frac{1}{2}$. We choose some $\delta\in(0,\veps)$ such that, for any $h\in \ell^p(A)$, $\|h-\sum_{v\in F}e_v\|_p<\delta$ implies $|h(v)|\geq \frac{1}{2}$ for all $v\in F$. We define a subset $G\subset F$ by picking exactly one vertex from $F$ per generation. By the hypothesis, there are $f\in \ell^p(A)$, $n\geq N$ and $m\geq p$ such that 
\begin{equation}\label{alg0}
    \bigg\|f-2\,\sum_{v\in G}e_v\bigg\|_p<\veps \quad \text{and}\quad \bigg\|B_\lambda^nf^m-\sum_{v\in F}e_v\bigg\|_p<\delta.
\end{equation}
By taking $N$ bigger if necessary, we can assume that 
\begin{equation}\label{alg1}
    \Chi^n(F)\cap G=\varnothing, \quad \Par^n(G)\cap F=\varnothing \quad \text{and}\quad \Par^n(u)=\Par^n(v), \ \forall u\sim v\in F.
\end{equation}
Let us show that, for all $v\in F$, 
\begin{equation}\label{abb1}
   \sup_{u\in\Chi^n(v)}|\lambda(v\to u)|>\frac{1}{2\varepsilon},
\end{equation}
and, for all $v\in G$, 
\begin{equation}\label{left_cond}
\max\bigg(\frac{1}{|\lambda(\Par^{n}(v)\to v)|},\sup_{u\in \Chi^{n}(\Par^{n}(v))}\frac{|\lambda(\Par^{n}(u)\to u)|}{|\lambda(\Par^{n}(v)\to v)|}\bigg)\geq \frac{1}{2\veps}.
\end{equation} 
By using \eqref{alg0} and \eqref{alg1}, we have $\|f\chi_{\Chi^n(v)}\|_p<\varepsilon$, for every $v\in F$. Moreover, by using again  \eqref{alg0}, we obtain that, for every $v\in F$,
\begin{align*}
\dfrac{1}{2}<1-\delta<\big|(B_{\lambda}^{n}f^m)(v)\big|&\leq \sum_{u\in\Chi^n(v)}|\lambda(v\to u) f^m(u)| \\
&\leq \sup_{u\in\Chi^n(v)}|\lambda(v\to u)|\, \Big(\sum_{u\in\Chi^n(v)}|f^m(u)|\Big)\\
&\leq \sup_{u\in\Chi^n(v)}|\lambda(v\to u)|\,
\|f\chi_{\Chi^n(v)}\|_m^m\\
&\leq \sup_{u\in\Chi^n(v)}|\lambda(v\to u)|\,
\|f\chi_{\Chi^n(v)}\|_p^m\\
&<\varepsilon^m \, \sup_{u\in\Chi^n(v)}|\lambda(v\to u)|
\end{align*}
(notice that $m\geq p$ implies $\|\cdot\|_{m}\leq \|\cdot\|_p$). Therefore,
\[\sup_{u\in\Chi^n(v)}|\lambda(v\to u)|>\frac{1}{2\varepsilon},\quad \forall v\in F.\]
Let us show that \eqref{left_cond} holds. Let us fix $v\in G$. We have
\begin{align*}
    (B_\lambda^nf^m)\big(\Par^n(v)\big)
        &=\sum_{u\in \Chi^n(\Par^n(v))} \lambda(\Par^n(v)\to u)f^m(u) \\
        &= \lambda(\Par^n(v)\to v)f^m(v)+\sum_{u\in \Chi^n(\Par^n(v))\setminus\{v\}} \lambda(\Par^n(v)\to u)f^m(u) \\
        &=\lambda(\Par^n(v)\to v)\bigg(f^m(v)+\sum_{u\in \Chi^n(\Par^n(v))\setminus\{v\}}\frac{\lambda(\Par^n(v)\to u)}{\lambda(\Par^n(v)\to v)}\, f^m(u)\bigg).
\end{align*}
From \eqref{alg1} we know that $\Par^n(v)\notin F$, so from \eqref{alg0} we get
\[|\lambda(\Par^n(v)\to v)|\cdot \bigg|f^m(v)+\sum_{u\in \Chi^n(\Par^n(v))\setminus\{v\}}\frac{\lambda(\Par^n(v)\to u)}{\lambda(\Par^n(v)\to v)}\, f^m(u)\bigg|
<\delta< \veps.\]
We can assume that $|\lambda(\Par^n(v)\to v)|>2\veps$ (otherwise, the conclusion \eqref{left_cond} is trivial). Thus
\[ \bigg|f^m(v)+\sum_{u\in \Chi^n(\Par^n(v))\setminus\{v\}}\frac{\lambda(\Par^n(v)\to u)}{\lambda(\Par^n(v)\to v)}\, f^m(u)\bigg|<\frac{1}{2}.\]
By using \eqref{alg0} and \eqref{alg1}, we have $\|f\chi_{\Chi^n(\Par^n(v))\setminus\{v\}}\|_p<\varepsilon$. Therefore, 
\begin{align*}
    \ |f(v)|^m -\dfrac{1}{2}&<\sum_{u\in \Chi^n(\Par^n(v))\setminus\{v\}}\bigg|\frac{\lambda(\Par^n(v)\to u)}{\lambda(\Par^n(v)\to v)}\, f^m(u)\bigg|\\
    &\leq \sup_{u\in \Chi^n(\Par^n(v))}\frac{|\lambda(\Par^n(v)\to u)|}{|\lambda(\Par^n(v)\to v)|} \bigg( \sum_{u\in \Chi^n(\Par^n(v))\setminus\{v\}}|f(u)|^m\bigg)\\
    &\leq \sup_{u\in \Chi^n(\Par^n(v))}\frac{|\lambda(\Par^n(v)\to u)|}{|\lambda(\Par^n(v)\to v)|}\|f\chi_{\Chi^n(\Par^n(v))\setminus\{v\}}\|_{m}^{m}\\
    &\leq \sup_{u\in \Chi^n(\Par^n(v))}\frac{|\lambda(\Par^n(v)\to u)|}{|\lambda(\Par^n(v)\to v)|}\|f\chi_{\Chi^n(\Par^n(v))\setminus\{v\}}\|_{p}^{m}\\
    &< \varepsilon^m  \sup_{u\in \Chi^n(\Par^n(v))}\frac{|\lambda(\Par^n(v)\to u)|}{|\lambda(\Par^n(v)\to v)|}.
\end{align*}
By using again \eqref{alg0}, we have $|f(v)|^m>(2-\varepsilon)^m$, hence
\[\sup_{u\in \Chi^n(\Par^n(v))}\frac{|\lambda(\Par^n(v)\to u)|}{|\lambda(\Par^n(v)\to v)|}>\dfrac{(2-\varepsilon)^m-1/2}{\varepsilon^m}>\dfrac{1}{2\varepsilon},\]
this shows that \eqref{left_cond} also holds. To get the desired conclusion, it is enough to apply the above  arguments to an increasing sequence  $(F_k)_{k\geq1}$ of finite subsets of $A$ such that $\bigcup_{k\geq1}F_k=A$, and a sequence  $(\delta_k)_{k\geq1}$  of positive numbers tending to zero. For each $k$, we can find a positive integer $n_k$ and a subset $G_k \subset F_k$ that satisfy conditions \eqref{abb1} and \eqref{left_cond}. Additionally, the sequence $(G_k)_{k \in \mathbb{N}}$ can be chosen to be increasing. It is clear that the first limit in condition (v) holds for any $v \in A$. Now, let us verify the second limit in (v). For a fixed \(v \in A\), there exists some $k_0 \in \mathbb{N}$ such that $v \in F_k$ for all $k \geq k_0$. By the definition of the sets $G_k$,  we can find $v_0 \in G_k$, for every $k \geq k_0$, such that $v \sim v_0$. Let $m_0 \in \mathbb{N}$ be such that $\Par^{m_0}(v) = \Par^{m_0}(v_0)$. Using condition \eqref{left_cond}, for every $k \geq k_0$ such that $n_k > m_0$, we obtain:

\begin{align*}
  M_k(v):&=\max\bigg( \frac{1}{|\lambda(\Par^{n_k}(v)\to v)|},\sup_{u\in \Chi^{n_k}(\Par^{n_k}(v))}\frac{|\lambda(\Par^{n_k}(u)\to u)|}{|\lambda(\Par^{n_k}(v)\to v)|}\bigg)\\
  &=\frac{|\lambda(\Par^{m_0}(v_0)\to v_0)|}{|\lambda(\Par^{m_0}(v)\to (v))|}\, M_k(v_0)\\
  &\geq \frac{|\lambda(\Par^{m_0}(v_0)\to v_0)|}{|\lambda(\Par^{m_0}(v)\to (v))|} \,\frac{1}{2\veps_k} \xrightarrow{k\to+\infty}+\infty,
\end{align*}
hence the second limit of $(v)$ holds.
 
We finish the proof by showing that $(v)\Rightarrow (iii)$. We aim to apply Proposition \ref{prop:hc:powers}, so let  $U,V$ be non-empty open subsets of $\ell^p(A)$ and let $m\geq 1$. We fix $f\in U$ and $g\in V$ with support on some finite subset $F$ of $A$. For any $a\in A$ and $k\geq 1$, there exist $u_{(a,n_k)}\in \Chi^{n_k}(a)$ and $b_{(a,n_k)}\in  \Chi^{n_k}(\Par^{n_k}(a))$  such that
    \begin{equation*}
      |\lambda(a\to u_{(a,n_{k})})|\xrightarrow{k\to+\infty} +\infty,
    \end{equation*}
    and
    \begin{equation*}\label{lp:unrooted:eq2}
      \max\bigg(\dfrac{1}{|\lambda\big(\Par^{n_k}(a)\to a\big)\big|},\dfrac{\big|\lambda\big(\Par^{n_k}(a)\to b_{(a,n_k)}\big)\big|}{\big|\lambda\big(\Par^{n_k}(a)\to a\big)\big|}\bigg)\xrightarrow{k\to+\infty}+\infty.
    \end{equation*}    
     By extracting the vertices depending on their behavior, we can write  $F= F_{1} \cup F_{2}$, with $F_{1} \cap F_{2} = \emptyset$, such that 
     \begin{equation}\label{lp:unrooted:eq1}
      |\lambda(a\to u_{(a,n_{k})})|\xrightarrow{k\to+\infty} +\infty, \forall a\in F,
    \end{equation}
    \begin{equation}\label{F_j^1:unrooted:eq}
    \lambda\big(\Par^{n_{k}}(a)\to a\big)\xrightarrow{k\to+\infty}0, \quad \forall a\in F_{1},
\end{equation}
     and
\begin{equation}\label{F_j^2:unrooted:eq}
      \dfrac{\lambda\big(\Par^{n_k}(a)\to a\big)}{\lambda\big(\Par^{^{n_k}}(a)\to b_{(a,n_k)}\big)}\xrightarrow{k\to+\infty}0, \quad \forall a\in F_{2}.   
\end{equation}
Of course, for any $a\in F_2$, the cardinality of the set $\{b_{(a,n_k)}: k\in\NN\}$ is infinite. Passing to a subsequence, if needed, we can assume that $b_{(a,n_k)}\notin F$ for all $a\in F_2$.

For each $n\in \ZZ$, set $I_n:=F_2\cap \Gen_n$, where $\Gen_n$ is the $n$-th generation of $A$ with respect to some fixed vertex. Since $F_2$ is finite, there exist $s\in\NN$ and $\eta_1\leq \cdots\leq \eta_s$ in $\ZZ$ such that $I_{\eta_j}\neq\varnothing$ and
\[F_2=\bigcup_{j=1}^{s}I_{\eta_j}.\]
For any $k\in\NN$ and $1\leq j\leq s$, there exists some integer $t_{(k,j)}\geq1$ such that the set $I_{\eta_j}$ can be written as a $t_{(k,j)}$ finite union of disjoints sets $J_{(k,j,l)}$ such that two vertices $u,v$ belongs to  $J_{(k,j,l)}$ if $b_{(u,n_k)}=b_{(v,n_k)}$. We have then, for any $k\in\NN$ and $1\leq j\leq s$,
\[I_{\eta_j}=\bigcup_{l=1}^{t_{(k,j)}}J_{(k,j,l)}.\]
For any $k\in\NN$, $1\leq j\leq s$ and $1\leq l\leq t_{(k,j)}$, let us fix some vertex $c_{(k,j,l)}\in J_{(k,j,l)}$.

Now, for all $k$ sufficiently big such that $\Chi^{n_k}(F)\cap F=\varnothing$, we define 
\[h_{k}=f+\sum_{a\in F}\frac{g(v)^{1/m}}{\lambda(a\to u_{(a,n_k)})^{1/m}}e_{u_{(a,n_k)}} +e^{i \frac{\pi}{m}}\sum_{j=1}^{s}\sum_{l=1}^{t_{(k,j)}} \bigg(\sum_{v\in J_{(k,j,l)}}\dfrac{f(v)^m \lambda(\Par^{n_k}(v)\to v)}{\lambda(\Par^{n_k}(v)\to b_{(v,n_k)})}\bigg)^{1/m}e_{b_{\big(c_{(k,j,l)},n_k\big)}}.\]
Note that the terms of $h_k$ have disjoint supports. By using \eqref{lp:unrooted:eq1} and \eqref{F_j^2:unrooted:eq}, we have
\[h_k\xrightarrow{k\to+\infty}f\in U.\]
Thus we have $h_k\in U$ for all $k$ big enough. On the other hand, we have
\begin{align*}
    h_k^m = f^m+\sum_{a\in F}\frac{g(v)}{\lambda(a\to u_{(a,n_k)})}e_{u_{(a,n_k)}} -\sum_{j=1}^{s}\sum_{l=1}^{t_{(k,j)}} \bigg(\sum_{v\in J_{(k,j,l)}} \dfrac{f(v)^m\lambda(\Par^{n_k}(v)\to v)}{\lambda(\Par^{n_k}(v)\to b_{(v,n_k)})}\bigg)e_{b_{\big(c_{(k,j,l)},n_k\big)}}.
\end{align*}
Hence,
\begin{align*}
    &B_\lambda^{n_k}h_k^m
    =\sum_{a\in F}f^m(a) \lambda(\Par^{n_k}(a)\to a)e_{\Par^{n_k}(a)}+g\\
    &-\sum_{j=1}^{s}\sum_{l=1}^{t_{(k,j)}} \bigg(\sum_{v\in J_{(k,j,l)}} \dfrac{f(v)^m\lambda(\Par^{n_k}(v)\to v)}{\lambda(\Par^{n_k}(v)\to b_{(v,n_k)})}\bigg)\lambda\Big(\Par^{n_k}\big(b_{(c_{(k,j,l)},n_k)}\big)\to b_{\big(c_{(k,j,l)},n_k\big)}\Big) e_{\Par^{n_k}\big(b_{(c_{(k,j,l)},n_k)}\big)},
\end{align*}
since $c_{(k,j,l)}\in J_{(k,j,l)}$, for every $v\in J_{(k,j,l)}$, we have $b_{(v,n_k)}=b_{\big(c_{(k,j,l)},n_k\big)}$, hence
\begin{align*}
    B_\lambda^{n_k}h_k^m
    &=\sum_{a\in F}f^m(a) \lambda(\Par^{n_k}(a)\to a)e_{\Par^{n_k}(a)}+g\\
    &-\sum_{j=1}^{s}\sum_{l=1}^{t_{(k,j)}}\sum_{v\in J_{(k,j,l)}}\dfrac{f(v)^m \lambda(\Par^{n_k}(v)\to v)}{\lambda(\Par^{n_k}(v)\to b_{(v,n_k)})}\,\lambda\Big(\Par^{n_k}\big(b_{(v,n_k)}\big)\to b_{(v,n_k)}\Big) e_{\Par^{n_k}\big(b_{(v,n_k)}\big)}\\
    &=\sum_{a\in F}f^m(a) \lambda(\Par^{n_k}(a)\to a)e_{\Par^{n_k}(a)}+g\\
    &-\sum_{j=1}^{s}\sum_{l=1}^{t_{(k,j)}} \sum_{v\in J_{(k,j,l)}}\dfrac{f(v)^m \lambda(\Par^{n_k}(v)\to v)}{\lambda(\Par^{n_k}(v)\to b_{(v,n_k)})}\,\lambda\Big(\Par^{n_k}(v)\to b_{(v,n_k)}\Big) e_{\Par^{n_k}(v)}\\
     &=\sum_{a\in F}f^m(a) \lambda(\Par^{n_k}(a)\to a)e_{\Par^{n_k}(a)}+g-\sum_{j=1}^{s}\sum_{l=1}^{t_{(k,j)}} \sum_{v\in J_{(k,j,l)}}f(v)^m \lambda(\Par^{n_k}(v)\to v)  e_{\Par^{n_k}(v)}\\
     &=\sum_{a\in F}f^m(a) \lambda(\Par^{n_k}(a)\to a)e_{\Par^{n_k}(a)}+g-\sum_{v\in F_2}f(v)^m \lambda(\Par^{n_k}(v)\to v)  e_{\Par^{n_k}(v)}\\
      &=\sum_{a\in F_1}f^m(a) \lambda(\Par^{n_k}(a)\to a)e_{\Par^{n_k}(a)}+g,
\end{align*}
now, by using \eqref{F_j^1:unrooted:eq}, we get
\[
B_\lambda^{n_k}h_k^m\xrightarrow{k\to+\infty}g\in V.
\]
Hence, we have $B_\lambda^{n_k}h_k^m\in V$ for all $k$ big enough. Therefore, the conclusion follows from Proposition \ref{prop:hc:powers}.
\end{proof}

Specially in the case of $\ell^1$-spaces of a tree, condition (v) in Theorem \ref{thm:necessary:unrooted} is equivalent to the hypercyclicity of $B_\lambda$ (see \cite[Theorem 5.3]{Karl1}). This gives the following readable corollary.
\begin{corollary}
    Let $A$ be an unrooted directed tree and $\lambda=(\lambda_v)_{v\in A}$ be a weight such that its induced backward shift $B_\lambda:\ell^1(A)\to \ell^1(A)$ is bounded. Then $B_\lambda$ is hypercyclic if, and only if, there is a residual set of elements $f$ such that $f^m\in HC(B_\lambda)$ for all $m\geq1$.
\end{corollary}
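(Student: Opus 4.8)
The plan is to read the statement as the specialization to $p=1$ of the chain of equivalences in Theorem \ref{thm:necessary:unrooted}, fed by the known hypercyclicity criterion on $\ell^1(A)$ from \cite[Theorem 5.3]{Karl1}. Concretely, the corollary is exactly the equivalence between hypercyclicity of $B_\lambda$ on $\ell^1(A)$ and assertion $(iii)$ of Theorem \ref{thm:necessary:unrooted} (with $p=1$), so everything reduces to identifying condition $(v)$ with hypercyclicity in the case $p=1$.

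First I would dispose of the easy implication. If there is a residual---hence nonempty---set of $f\in\ell^1(A)$ with $f^m\in HC(B_\lambda)$ for all $m\geq 1$, then taking $m=1$ already produces a hypercyclic vector, so $B_\lambda$ is hypercyclic. No tree structure is needed for this direction; it simply reads off $HC(B_\lambda)\neq\varnothing$ from the case $m=1$.

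For the converse, suppose $B_\lambda$ is hypercyclic on $\ell^1(A)$. The heart of the argument is that, for $p=1$, the characterization of hypercyclicity on unrooted trees from \cite[Theorem 5.3]{Karl1} is precisely condition $(v)$ of Theorem \ref{thm:necessary:unrooted}: the first limit $\sup_{u\in\Chi^{n_k}(v)}|\lambda(v\to u)|\to+\infty$ is the forward (children) part of the criterion, while the $\max$-term captures the contribution coming from the parent direction that is proper to the unrooted setting and does not appear in the rooted characterization of Theorem \ref{carac-l1}. Granting this identification, hypercyclicity yields $(v)$, and the equivalence $(v)\Leftrightarrow(iii)$ already established in Theorem \ref{thm:necessary:unrooted} delivers $(iii)$, which is exactly the desired residual set of $f$ with $f^m\in HC(B_\lambda)$ for all $m\geq 1$.

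The only point requiring care---and the step I expect to be the main obstacle---is to verify that the criterion of \cite[Theorem 5.3]{Karl1} matches condition $(v)$ verbatim, up to the harmless normalizations and the choice of reference vertex used to define the generations $\Gen_n$. This is pure bookkeeping rather than new analysis: once the two criteria are aligned, the corollary follows immediately by combining Theorem \ref{thm:necessary:unrooted} with that external characterization, with no further estimates needed.
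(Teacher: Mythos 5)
Your proposal is correct and follows exactly the paper's route: the paper derives this corollary by observing that condition $(v)$ of Theorem \ref{thm:necessary:unrooted} is, for $p=1$, precisely the characterization of hypercyclicity from \cite[Theorem 5.3]{Karl1}, and then invoking the already-proved equivalence $(iii)\Leftrightarrow(v)$ (the converse direction being the trivial $m=1$ observation you make). The bookkeeping identification of the two criteria that you flag as the only delicate point is indeed all the paper relies on.
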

In particular, if one wants to prove that a hypercyclic weighted backward shift on $\ell^1(A)$ does not admit a hypercyclic algebra, then a different method than the one used by Aron et al. in \cite{aron2007powers} is required. Indeed, for proving that translations don't admit hypercyclic algebras, they have shown that the these operators admit no vector with hypercyclic powers. Alas, hypercyclic shifts always have many hypercyclic powers. A similar fact also happens with non-trivial convolution operators $\phi(D)$ with $|\phi(0)|>1$. We don't know exactly which ones admit hypercyclic algebras, but Bayart proved in \cite[Theorem 1.3]{bayart2019algebra} that they do have many hypercyclic powers.

It was shown in \cite[Corollary 5.4]{Karl1} that, on any unrooted directed tree $A$, a Rolewicz operators on $\ell^1(A)$ is never hypercyclic. However, Rolewicz operators on $\ell^p$-spaces for $1 < p < \infty$ can be hypercyclic (even mixing and even chaotic, see \cite[Theorem 9.4]{Karl2}).
By using the previous theorem, we can deduce the following consequence. 

\begin{corollary}\label{corol:rolewicz}
    Let $A$ be an unrooted directed tree and $\lambda\in \CC$. Then no Rolewicz operator $\lambda B$, acting on $\ell^p(A)$ with $1 \leq p < \infty$, supports a dense hypercyclic algebra. 
\end{corollary}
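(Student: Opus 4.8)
The plan is to argue by contradiction using the necessary condition $(v)$ of Theorem \ref{thm:necessary:unrooted}, applied to the constant weight. Suppose some Rolewicz operator $\lambda B$ on $\ell^p(A)$, $1\leq p<\infty$, supports a dense hypercyclic algebra, that is, assertion $(i)$ of Theorem \ref{thm:necessary:unrooted} holds. Through the implication chain $(i)\Rightarrow(iii)\Leftrightarrow(v)$, there must then exist a sequence $(n_k)_{k\geq1}$ of non-negative integers satisfying both limits in $(v)$. The key simplification is that for the constant weight $\lambda_v\equiv\lambda$ every path of length $n$ contributes the same factor: for any $u\in\Chi^n(v)$ one has $\lambda(v\to u)=\lambda^n$, hence $|\lambda(v\to u)|=|\lambda|^n$.

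First I would read off the consequence of the first limit. Since $\sup_{u\in\Chi^{n_k}(v)}|\lambda(v\to u)|=|\lambda|^{n_k}$, the requirement that this tend to $+\infty$ forces simultaneously $|\lambda|>1$ and $n_k\to+\infty$ (a bounded sequence $(n_k)$ would keep $|\lambda|^{n_k}$ bounded). In particular, the case $|\lambda|\leq 1$ is already excluded by the failure of the first limit.

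Next I would evaluate the second limit. The crucial combinatorial observation is that if $u\in\Chi^{n_k}(\Par^{n_k}(v))$ then, by uniqueness of parents in a directed tree, $\Par^{n_k}(u)=\Par^{n_k}(v)$; hence $\lambda(\Par^{n_k}(u)\to u)$ is again a product along a path of length $n_k$ and has modulus $|\lambda|^{n_k}$, exactly as $|\lambda(\Par^{n_k}(v)\to v)|=|\lambda|^{n_k}$. Consequently the two quantities inside the maximum collapse to
\[\frac{1}{|\lambda(\Par^{n_k}(v)\to v)|}=|\lambda|^{-n_k}\qquad\text{and}\qquad \sup_{u\in \Chi^{n_k}(\Par^{n_k}(v))}\frac{|\lambda(\Par^{n_k}(u)\to u)|}{|\lambda(\Par^{n_k}(v)\to v)|}=1.\]
Since $|\lambda|>1$ and $n_k\to+\infty$, the first of these tends to $0$ while the second is constantly equal to $1$, so the maximum converges to $1$ and cannot tend to $+\infty$. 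This contradicts the second limit in condition $(v)$, and therefore $\lambda B$ supports no dense hypercyclic algebra.

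This is essentially a direct computation once Theorem \ref{thm:necessary:unrooted} is at hand, so I do not expect a genuine obstacle. The only point deserving care is the identity $\Par^{n_k}(u)=\Par^{n_k}(v)$ for $u\in\Chi^{n_k}(\Par^{n_k}(v))$, which is precisely what makes the ``spreading ratio'' in condition $(v)$ identically $1$ and hence unable to diverge. The underlying moral is that, on an unrooted tree, a dense hypercyclic algebra demands some genuine inhomogeneity of the weights along the backward direction, which a constant Rolewicz weight can never supply.
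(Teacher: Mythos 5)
Your argument is correct and is exactly the route the paper intends: the corollary is stated as an immediate consequence of Theorem \ref{thm:necessary:unrooted}, and for a constant (hence symmetric) weight the ratio in condition $(v)$ is identically $1$ while the first limit forces $|\lambda|>1$ and $n_k\to+\infty$, so the maximum tends to $1$ rather than $+\infty$. Your computation, including the observation that $\Par^{n_k}(u)=\Par^{n_k}(v)$ for $u\in\Chi^{n_k}(\Par^{n_k}(v))$, matches the paper's own remark that for symmetric weights all quotients inside the supremum equal one.
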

We have chosen to state this corollary this way for simplicity, but the conclusion is actually stronger: these operators admit no dense set of vectors with hypercyclic powers. In particular, Theorem \ref{thm:generalcriterion} does not apply to them. Therefore, if they do admit hypercyclic algebras, then these cannot come from a Baire argument.

In stark contrast with Theorem \ref{thm:necessary:unrooted}, we do not know if (v) $\Rightarrow$ (i) or (ii) in the previous theorem. More precisely, if we look into the second condition of Theorem \ref{thm:necessary:unrooted}(v), then we have two possibilities for each $v\in A$: either
\begin{equation}\label{eq:poss=bad}
    \sup_{u\in \Chi^{n_k}(\Par^{n_k}(v))}\frac{\big|\lambda(\Par^{n_k}(u)\to u)\big|}{|\lambda(\Par^{n_k}(v)\to v)\big|}\xrightarrow[]{k\to+\infty} +\infty,
\end{equation}
or
\begin{equation}\label{eq:poss=good}
    \lambda(\Par^{n_k}(v)\to v)\xrightarrow[]{k\to+\infty} 0.
\end{equation}
We do not know how to produce hypercyclic algebras when there are $v\in A$ satisfying \eqref{eq:poss=bad} and not \eqref{eq:poss=good}. On the other hand, when \eqref{eq:poss=good} happens for all $v\in A$, then it is not difficult to adapt to the unrooted case our results from Section \ref{sec:rooted}.

\begin{theorem}\label{xico}
    Let $A$ be an unrooted directed tree and $\lambda=(\lambda_v)_{v\in A}$ be a weight such that its induced backward shift $B_\lambda:\ell^p(A)\to \ell^p(A)$ is bounded, with $1\leq p<\infty$. Suppose that there is a sequence of non-negative integers $(n_k)_{k\geq1}$ such that, for every $v\in A$,
    \begin{align*}
         \sup_{u\in \Chi^{n_k}(v)}|\lambda(v\to u)| \xrightarrow{k\to+\infty} +\infty \quad\text{ and }\quad\lambda(\Par^{n_k}(v)\to v)\xrightarrow[]{k\to+\infty} 0.
    \end{align*}
    Then $B_\lambda$ has a dense, countably generated, free hypercyclic algebra.
\end{theorem}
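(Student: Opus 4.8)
The plan is to verify the hypotheses of Theorem \ref{thm:generalcriterion} for every $d\geq 1$, which then yields a dense, not finitely generated, free hypercyclic algebra. The overall scheme follows closely the proof of the implication $(iv)\Rightarrow(i)$ in Theorem \ref{cara:lp:hc-alg-rooted}: the first limit in the hypothesis plays exactly the role of condition $(iv)$ there, and the genuinely new feature coming from the unrooted setting is the behaviour of $B_\lambda^{n_k}$ on vectors of fixed finite support, which is precisely where the second limit enters.

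First I would fix $d\geq 1$, a finite nonempty $P\subset\NN_0^d\setminus\{(0,\dots,0)\}$, and nonempty open sets $U_1,\dots,U_d,V,W$ with $0\in W$. As in the rooted case I would choose $(f_1,\dots,f_d)\in U_1\times\cdots\times U_d$ and $g\in V$ all with support contained in a common finite set $F\subset A$ (possible since finitely supported vectors are dense), and apply Lemma \ref{lemma:maps} to produce $s\in(0,+\infty)^d$ and $\beta\in P$ with $1=L_\beta(s)<L_\alpha(s)$ for every $\alpha\in P\setminus\{\beta\}$. Using the first limit, for each $a\in F$ and each large $k$ I would select $u_{a,k}\in\Chi^{n_k}(a)$ with $|\lambda(a\to u_{a,k})|$ as large as desired, set $g_{a,k}=\lambda(a\to u_{a,k})^{-1}e_{u_{a,k}}$, and record that $B_\lambda^{n_k}e_{u_{a,k}}=\lambda(a\to u_{a,k})e_a$ (valid because $a$ is the unique $n_k$-th ancestor of $u_{a,k}$, exactly as in the rooted argument). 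Setting $h_{k,j}=f_j+\sum_{a\in F}(g(a)g_{a,k})^{s_j}$ and using that for large $k$ the sets $\Chi^{n_k}(a)$, $a\in F$, are pairwise disjoint and disjoint from $F$, the disjointness of supports gives $h_k^\alpha=f^\alpha+\sum_{a\in F}(g(a)g_{a,k})^{L_\alpha(s)}$, whence
\[B_\lambda^{n_k}h_k^\alpha=B_\lambda^{n_k}(f^\alpha)+\sum_{a\in F}\frac{g(a)^{L_\alpha(s)}}{\lambda(a\to u_{a,k})^{L_\alpha(s)-1}}e_a.\]

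The key divergence from the rooted proof is the term $B_\lambda^{n_k}(f^\alpha)$. In the rooted case this vanishes identically, because a finitely supported vector is annihilated by a sufficiently high power of the shift; in the unrooted case this fails, since every $u\in F$ admits an $n_k$-th ancestor $\Par^{n_k}(u)$. Instead I would write
\[B_\lambda^{n_k}(f^\alpha)=\sum_{v}\bigg(\sum_{u\in\Chi^{n_k}(v)\cap F}\lambda(v\to u)f^\alpha(u)\bigg)e_v,\]
note that only the finitely many vertices $v=\Par^{n_k}(u)$, $u\in F$, contribute, and bound each surviving coefficient by a finite sum of terms of the form $|\lambda(\Par^{n_k}(u)\to u)|\,|f^\alpha(u)|$. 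Since $F$ is finite and $\lambda(\Par^{n_k}(v)\to v)\to 0$ for every $v$, this forces $\|B_\lambda^{n_k}(f^\alpha)\|_p\to 0$ for each $\alpha\in P$. This is exactly the point where the second hypothesis is indispensable, and I expect it to be the main (though not deep) obstacle.

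With this in hand the three conclusions required by the criterion follow as in the rooted case. For $\alpha=\beta$ the sum equals $\sum_{a\in F}g(a)e_a=g$, so $B_\lambda^{n_k}h_k^\beta\to g\in V$ and hence lies in $V$ for $k$ large. For $\alpha\in P\setminus\{\beta\}$, since $L_\alpha(s)-1>0$ and $|\lambda(a\to u_{a,k})|\to+\infty$, the sum tends to $0$; combined with $B_\lambda^{n_k}(f^\alpha)\to 0$ this gives $B_\lambda^{n_k}h_k^\alpha\to 0\in W$, hence membership in $W$ for $k$ large. Finally $\|h_{k,j}-f_j\|_p\to 0$ because each $\|(g(a)g_{a,k})^{s_j}\|_p=|g(a)|^{s_j}|\lambda(a\to u_{a,k})|^{-s_j}\to 0$, so $h_{k,j}\in U_j$ for $k$ large. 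Choosing one such large $k$ verifies the criterion of Theorem \ref{thm:generalcriterion} for this $d$, and since $d\geq 1$ was arbitrary we conclude that $B_\lambda$ admits a dense, countably generated, free hypercyclic algebra.
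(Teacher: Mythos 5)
Your proposal is correct and follows essentially the same route as the paper's proof: the same application of Theorem \ref{thm:generalcriterion} with the single-vertex right inverses $g_{a,k}=\lambda(a\to u_{a,k})^{-1}e_{u_{a,k}}$, and the same observation that the only new term compared with the rooted case, namely $B_\lambda^{n_k}(f^\alpha)=\sum_{a\in F}f^\alpha(a)\lambda(\Par^{n_k}(a)\to a)e_{\Par^{n_k}(a)}$, is killed by the second hypothesis. No gaps; your explicit remark that the $f_j$ must also be taken finitely supported (so that this term is a finite sum) is exactly the reading the paper intends.
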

\begin{proof}
Let $d\geq 1$, let $P\subset \NN_0^d\backslash\{(0,...,0)\}$ be finite and let $U_1,U_2,...,U_d, V,W\subset \ell^p(A)$ be non-empty and open, with $0\in W$. We fix $(f_1,\dots, f_d)\in U_1\times \cdots\times U_d$ and $g\in V$ with support on some finite subset $F$ of $A$. From Lemma \ref{lemma:maps}, there are $s\in (0,+\infty)^d$ and $\beta\in P$ such that the maps  $L_\alpha:\RR^d\to\RR$ defined $L_\alpha(s)=\sum_{j=1}^d\alpha_js_j$, for $s\in\RR^d$ and $\alpha\in P$, satisfy $1=L_\beta(s)<L_\alpha(s)$ for all $\alpha\in P\backslash\{\beta\}$.

Now, from the hypothesis, there exists an increasing sequence of positive entire numbers $(n_k)_{k\geq1}$ such that, for all $v\in A$,
    \[C_k(v):=\sup_{u\in \Chi^{n_k}(v)}\big|\lambda(v\to u)\big|\xrightarrow{k\to\infty} +\infty.\]
Thus, for each $a\in A$ and each $k\in\NN$, we can find $u_{a,k}\in \Chi^{n_k}(a)$ such that
\begin{equation}\label{arafat-rep}
    \frac{1}{|\lambda(a\to u_{a,k})|}< \frac{1}{C_k(a)}+\frac{1}{k}.
\end{equation}
We define $g_{a,k}:A\to \KK$ by \[g_{a,k}=\frac{1}{\lambda(a\to u_{a,k})}e_{u_{a,k}}\]
and we notice that $(B_\lambda^{n_k}e_{u_{a,k}})(v)=\lambda(a\to u_{a,k})e_{a}(v)$. Now, for all $k$ sufficiently big (conditions on the size of $k$ will be given during the proof), we set $h_k=(h_{k,1},\dots,h_{k,d})$, where
\[h_{k,j}=f_j+\sum_{a\in F}\big(g(a)g_{a,k}\big)^{s_j},\quad j=1,\dots, d.\]
If $k\in\NN$ is big enough, we will have $\Chi^{n_k}(F)\cap F=\varnothing $. Thus, for each $\alpha\in P$,
\begin{align*}
    h_k^\alpha = f_1^{\alpha_1}\cdots f_d^{\alpha_d}+\sum_{a\in F} \big(g(a)g_{a,k}\big)^{L_\alpha(s)}.
\end{align*}
Hence,
\begin{align*}
    B_\lambda^{n_k}h_k^\alpha
    &=\sum_{a\in F}(f_1^{\alpha_1}\cdots f_d^{\alpha_d})(a) \lambda(\Par^{n_k}(a)\to a)e_{\Par^{n_k}(a)}+\sum_{a\in F}\frac{g(a)^{L_\alpha(s)}}{\lambda(a\to u_{a,k})^{L_\alpha(s)-1}} e_{a}.
\end{align*}
If $\alpha=\beta$, we have 
\[B_\lambda^{n_k}h_k^\beta=\sum_{a\in F}(f_1^{\beta_1}\cdots f_d^{\beta_d})(a) \lambda(\Par^{n_k}(a)\to a)e_{\Par^{n_k}(a)}+\sum_{a\in F}g(a)e_a\xrightarrow{k\to+\infty}g\in V;\]
and if $\alpha\in P\backslash\{\beta\}$, since
\begin{equation*}
    \Big\|B_\lambda^{n_k}h_k^\alpha\Big\|_{p}^{p}
    = \sum_{a\in F}|(f_1^{\alpha_1}\cdots f_d^{\alpha_d})(a)|^p |\lambda(\Par^{n_k}(a)\to a)|^p+    
    \sum_{a\in F} \frac{|g(a)|^{pL_\alpha(s)}}{|\lambda(a\to u_{a,k})|^{p(L_\alpha(s)-1)}},
\end{equation*}
and since $L_\alpha(s)-1>0$, from \eqref{arafat-rep}, we find
\[\Big\|B_\lambda^{n_k}h_k^\alpha\Big\|_{p}^{p}\xrightarrow{k\to+\infty}0.\]
Thus, $B_\lambda^{n_k}h_k^\alpha\in W$ if $k$ is big enough. Finally, still from \eqref{arafat}, we get
\begin{align*}
    \|h_{k,j}-f_j\|_{p}^{p}&\leq
    \sum_{a\in F}|g(a)|^{ps_j}\frac{1}{|\lambda(a\to u_{a,k})|^{ps_j}}\xrightarrow{k\to+\infty} 0.
\end{align*}
Hence, if $k$ is big enough, $h_{k,j}\in U_j$ for all $j=1,\dots,d$. This completes the proof.
\end{proof}

Notice that, for symmetric weights, condition \eqref{eq:poss=bad} never happens, for if this is the case, then all quotients inside the supremum are ones. Therefore, we can combine Theorem \ref{thm:necessary:unrooted} with Theorem \ref{xico} to get the following satisfying consequence.
\begin{corollary}\label{corol:symmetric:carac}
    Let $A$ be an unrooted directed tree, $\lambda=(\lambda_v)_{v\in A}$ be a symmetric weight and $1\leq p<\infty$. 
Suppose that $B_\lambda$ is bounded on $\ell^p(A)$. The following are equivalent.
\begin{enumerate}[$(i)$]
    \item $B_\lambda$ supports a dense, countably generated, free hypercyclic algebra.
    \item $B_\lambda$ supports a dense hypercyclic algebra. 
    \item There is a dense set of elements $f\in\ell^p(A)$ such that $f^m\in  HC(B_\lambda)$ for some integer $m \geq p$.
    \item There is a sequence of non-negative integers $(n_k)_{k\geq1}$ such that, for every $v\in A$,
    \[\sup_{u\in \Chi^{n_k}(v)}|\lambda(v\to u)| \xrightarrow{k\to+\infty} +\infty \quad\text{ and }\quad\lambda(\Par^{n_k}(v)\to v)\xrightarrow[]{k\to+\infty} 0.\]
\end{enumerate}
\end{corollary}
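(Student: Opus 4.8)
The plan is to close the cycle of implications $(i)\Rightarrow(ii)\Rightarrow(iii)\Rightarrow(iv)\Rightarrow(i)$ by leaning on the two results just established, Theorem \ref{thm:necessary:unrooted} and Theorem \ref{xico}, and by inserting the symmetry hypothesis at exactly one place so that the two match up. The implication $(i)\Rightarrow(ii)$ is immediate, since a dense, countably generated, free hypercyclic algebra is in particular a dense hypercyclic algebra. For $(ii)\Rightarrow(iii)$ I would simply invoke Theorem \ref{thm:necessary:unrooted}: its assertion $(i)$ is exactly our $(ii)$, and the chain $(i)\Rightarrow\cdots\Rightarrow(iv)$ proved there delivers our $(iii)$ (a dense set of $f$ with $f^m\in HC(B_\lambda)$ for some integer $m\geq p$) with no additional hypothesis needed.

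The crux is $(iii)\Rightarrow(iv)$. By Theorem \ref{thm:necessary:unrooted} our $(iii)$ coincides with its condition $(iv)$, which is equivalent to its condition $(v)$; so it suffices to show that, for a symmetric weight, condition $(v)$ of Theorem \ref{thm:necessary:unrooted} reduces to our $(iv)$. The first limit is literally identical in both. For the second, I would argue that the supremum term never contributes. Fix $v\in A$ and $k$ with $n_k$ large, write $w=\Par^{n_k}(v)$, and take any $u\in\Chi^{n_k}(w)$. Such a $u$ satisfies $\Par^{n_k}(u)=w=\Par^{n_k}(v)$, and $u\sim v$ because both lie $n_k$ generations below their common ancestor $w$. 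The factors of $\lambda(w\to u)=\prod_{i=0}^{n_k-1}\lambda_{\Par^i(u)}$ and of $\lambda(w\to v)=\prod_{i=0}^{n_k-1}\lambda_{\Par^i(v)}$ are read off along the two descending paths of length $n_k$ from $w$; at each level $i$ the vertices $\Par^i(u)$ and $\Par^i(v)$ lie in the same generation $\mathrm{Gen}_{g(v)-i}$, hence carry the same weight by symmetry. Consequently $\lambda(\Par^{n_k}(u)\to u)=\lambda(\Par^{n_k}(v)\to v)$, so every quotient inside the supremum equals $1$. The $\max$ in condition $(v)$ therefore equals $\max\bigl(1/|\lambda(\Par^{n_k}(v)\to v)|,\,1\bigr)$, and its divergence to $+\infty$ is equivalent to $\lambda(\Par^{n_k}(v)\to v)\to 0$, which is precisely the second limit in our $(iv)$.

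Finally, $(iv)\Rightarrow(i)$ is nothing but Theorem \ref{xico}, whose hypotheses are word-for-word our condition $(iv)$ and whose conclusion is our $(i)$; this closes the loop and yields all the stated equivalences. The only genuinely new ingredient beyond quoting the two theorems is the weight computation carried out in $(iii)\Rightarrow(iv)$, and that is where I expect to be most careful: one must verify that the two length-$n_k$ paths from the common ancestor $w$ visit same-generation vertices level by level, so that symmetry can be applied factor by factor and forces each quotient to be identically $1$. Everything else is bookkeeping that aligns the labels of the three statements.
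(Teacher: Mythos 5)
Your proposal is correct and follows essentially the same route as the paper, which likewise obtains the corollary by combining Theorem \ref{thm:necessary:unrooted} with Theorem \ref{xico} after observing that for a symmetric weight every quotient $|\lambda(\Par^{n_k}(u)\to u)|/|\lambda(\Par^{n_k}(v)\to v)|$ in condition $(v)$ equals $1$, so the $\max$ condition collapses to $\lambda(\Par^{n_k}(v)\to v)\to 0$. Your level-by-level verification that $\Par^i(u)\sim\Par^i(v)$ forces each factor to coincide is exactly the justification the paper leaves implicit.
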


Notice that \eqref{eq:poss=good} is always the case when the tree in question has a free left end, for in this case the quotients inside the supremum in \eqref{eq:poss=bad} are constant for all $k$ sufficiently big. This lead immediately to the following nice complement of Theorem \ref{cara:lp:hc-alg-rooted} for the case of unrooted trees with a free left end.

\begin{corollary}\label{corol:new}
Let $A$ be an unrooted directed tree with a free left end, let $1\leq p<+\infty$ and let $\lambda=(\lambda_v)_{v\in A}$ be a weight such that its induced backward shift $B_\lambda:\ell^p(A)\to \ell^p(A)$ is bounded. The following are equivalent. 
\begin{enumerate}[$(i)$]
    \item $B_\lambda$ supports a dense, countably generated, free hypercyclic algebra.
    \item $B_\lambda$ supports a dense hypercyclic algebra.
    \item There is a dense set of vectors generating a hypercyclic algebra for $B_\lambda$.
    \item There is a dense set of elements $f\in \ell^p(A)$ such that $f^m\in HC(B_\lambda)$ for some integer $m\geq p$.
    \item There is a sequence of positive integers $(n_k)_{k\geq1}$ such that, for every $v\in A$, 
     \[\sup_{u\in \Chi^{n_k}(v)}|\lambda(v\to u)| \xrightarrow{k\to+\infty} +\infty \quad\text{ and }\quad\lambda(\Par^{n_k}(v)\to v)\xrightarrow[]{k\to+\infty} 0.\]
\end{enumerate}
\end{corollary}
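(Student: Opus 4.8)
The plan is to prove the cycle of implications $(i)\Rightarrow(ii)\Rightarrow(iii)\Rightarrow(iv)\Rightarrow(v)\Rightarrow(i)$, most of whose arrows are either immediate or already contained in the results above. The implication $(i)\Rightarrow(ii)$ is trivial, since a dense, countably generated, free hypercyclic algebra is in particular a dense hypercyclic algebra. After matching up the hypotheses, $(ii)\Rightarrow(iii)$ and $(iii)\Rightarrow(iv)$ are exactly the implications supplied by Theorem \ref{thm:necessary:unrooted} (its $(i)\Rightarrow(ii)$, and its $(ii)\Rightarrow(iii)\Rightarrow(iv)$, respectively), which hold on any unrooted tree and in particular here. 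Finally, $(v)\Rightarrow(i)$ is nothing but Theorem \ref{xico}, whose hypothesis is \emph{precisely} condition $(v)$ and whose conclusion is $(i)$. Thus the whole content of the statement reduces to the single implication $(iv)\Rightarrow(v)$, and this is where the free left end enters.

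To prove $(iv)\Rightarrow(v)$, I would first feed $(iv)$ into Theorem \ref{thm:necessary:unrooted} to produce a sequence $(n_k)$ satisfying its condition $(v)$: for every $v\in A$ the limit $\sup_{u\in\Chi^{n_k}(v)}|\lambda(v\to u)|\to+\infty$ holds, which is already the first half of the condition we want, together with the divergence of the maximum, i.e.\ for each $v$ either \eqref{eq:poss=bad} or \eqref{eq:poss=good}. It then remains to upgrade this to the single requirement $\lambda(\Par^{n_k}(v)\to v)\to 0$, that is, to rule out possibility \eqref{eq:poss=bad}. Fix $v$ and let $g\in\ZZ$ be the index with $v\in\Gen_g$. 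Since $A$ has a free left end, there is an integer $n$ with $\Gen_k$ a singleton for all $k\le n$; hence for all $k$ large enough the vertex $w:=\Par^{n_k}(v)$ lies in a singleton generation and is therefore the common ancestor of the whole generation $\Gen_g$. Consequently $\Chi^{n_k}(\Par^{n_k}(v))=\Gen_g$ for all such $k$, so the supremum in \eqref{eq:poss=bad} is taken over a set that no longer depends on $k$.

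The heart of the argument is the observation that each quotient inside that supremum is eventually independent of $k$. Indeed, for $u\in\Gen_g$ one has $\Par^{n_k}(u)=\Par^{n_k}(v)=w$, and, writing $z=z_{u,v}$ for the closest common ancestor of $u$ and $v$, the paths $w\to u$ and $w\to v$ share the initial segment $w\to z$; since the weights are non-zero, this common factor cancels and $\tfrac{|\lambda(w\to u)|}{|\lambda(w\to v)|}=\tfrac{|\lambda(z\to u)|}{|\lambda(z\to v)|}$, a quantity that does not involve $w$ and hence is constant in $k$. Therefore the supremum in \eqref{eq:poss=bad} equals a fixed constant for all large $k$, so it cannot genuinely tend to $+\infty$; the divergence of the maximum must then be carried entirely by the reciprocal term, forcing $\lambda(\Par^{n_k}(v)\to v)\to 0$. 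This is exactly the second limit in $(v)$, and the cycle closes. I expect the main obstacle to be the bookkeeping of this last step: one must pin down the geometry of $\Chi^{n_k}(\Par^{n_k}(v))$ rigorously and, crucially, check that the eventually-constant supremum is \emph{finite}, so that the divergence of the maximum cannot be absorbed by it and must instead come from $1/|\lambda(\Par^{n_k}(v)\to v)|$. This finiteness is precisely the point at which the free left end hypothesis is indispensable, as it is what collapses each comparison to the fixed branch vertex $z_{u,v}$.
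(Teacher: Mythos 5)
Your proof is correct and follows essentially the same route as the paper, which obtains the corollary by combining Theorem \ref{thm:necessary:unrooted} with Theorem \ref{xico} and the observation that, on a tree with a free left end, $\Chi^{n_k}(\Par^{n_k}(v))$ stabilizes to the full generation of $v$ and the quotients in \eqref{eq:poss=bad} cancel down to the common-ancestor branch point, hence are eventually constant in $k$. The one point you defer --- finiteness of that eventually constant supremum --- does hold, since for any fixed $n$ one has $\sup_{u\in\Chi^{n}(w)}|\lambda(w\to u)|\le\|B_\lambda^{n}\|<+\infty$, so the divergence of the maximum must indeed come from $1/|\lambda(\Par^{n_k}(v)\to v)|$ and your argument closes.
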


Let us now turn our attention to the case of $c_0$-spaces. In the following result, we obtain analogous implications to the the ones found in Theorem \ref{thm:necessary:unrooted}. Here, as before, property \eqref{key-lemma} is going to be quite useful.

\begin{theorem}\label{thm:necessary:c0}
    Let $A$ be an unrooted directed tree and $\lambda=(\lambda_v)_{v\in A}$ be a weight such that its induced backward shift $B_\lambda:c_0(A)\to c_0(A)$ is bounded. The following assertions are equivalent. 
    \begin{enumerate}[$(i)$]
        \item $B_\lambda$ is hypercyclic. 
        \item There is a residual set of elements $f\in c_0(A)$ such that $f^m\in HC(B_\lambda)$ for all $m\geq 1$.
        \item There is a sequence of non-negative integers $(n_k)_{k\geq1}$ such that, for every $v\in A$,
\begin{equation}\label{hyp:c0:1}
    \sum_{u\in \Chi^{n_k}(v)}\big|\lambda(v\to u)\big| \xrightarrow{k\to+\infty}+\infty,
\end{equation}
and
\begin{equation}\label{hyp:c0:2}
    \max\Bigg(\dfrac{1}{|\lambda(\Par^{n_k}(v)\to v)|},\sum_{u\in \Chi^{n_k}(\Par^{n_k}(v))}\dfrac{\big|\lambda(\Par^{n_k}(v)\to u)\big|}{|\lambda(\Par^{n_k}(v)\to v)|}\Bigg) \xrightarrow{k\to+\infty}+\infty.
\end{equation}
    \end{enumerate}
\end{theorem}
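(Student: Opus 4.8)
The plan is to close the cycle $(iii)\Rightarrow(ii)\Rightarrow(i)\Rightarrow(iii)$, the only substantial work being $(iii)\Rightarrow(ii)$. The implication $(ii)\Rightarrow(i)$ is immediate: taking $m=1$ in $(ii)$ yields a residual, hence nonempty, set of hypercyclic vectors, so $B_\lambda$ is hypercyclic. The equivalence $(i)\Leftrightarrow(iii)$ is exactly the characterization of hypercyclicity of weighted backward shifts on $c_0$-spaces of unrooted directed trees due to Grosse-Erdmann and Papathanasiou: I would invoke \cite[Theorem 5.3]{Karl1} (the $c_0$ counterpart of the $\ell^1$ statement recalled after Theorem \ref{thm:necessary:unrooted}), since conditions \eqref{hyp:c0:1}--\eqref{hyp:c0:2} are precisely that criterion; alternatively, $(i)\Rightarrow(iii)$ can be extracted directly from a single hypercyclic vector as in the proof of $(iv)\Rightarrow(v)$ of Theorem \ref{thm:necessary:unrooted}, replacing suprema by sums.

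For $(iii)\Rightarrow(ii)$ I would apply Proposition \ref{prop:hc:powers}: fix $m\geq1$, nonempty open sets $U,V$, and choose $f\in U$ and $g\in V$ with $\supp(g)$ contained in a finite set $F$. Using $(iii)$ I extract the sequence $(n_k)$; after passing to a subsequence and using that the maximum in \eqref{hyp:c0:2} diverges, I split $F=F_1\cup F_2$ so that $\lambda(\Par^{n_k}(a)\to a)\to0$ for $a\in F_1$, while for $a\in F_2$ the divergent term is the sum $\sum_{u\in\Chi^{n_k}(\Par^{n_k}(a))}|\lambda(\Par^{n_k}(a)\to u)|/|\lambda(\Par^{n_k}(a)\to a)|$. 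The candidate is $h_k=f+(\text{forward part})+(\text{correction part})$, whose three groups of terms sit in pairwise distinct generations for $k$ large and therefore have disjoint supports, so that $h_k^m$ splits as $f^m$ plus the $m$-th powers of the two added parts.

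The forward part is built exactly as in the proof of Theorem \ref{hc_alg_c0_rooted}: for each $a\in F$, apply \eqref{key-lemma} on $J=\Chi^{n_k}(a)$ to produce a right inverse $R_{a,k}$ with $B_\lambda^{n_k}R_{a,k}=e_a$ and $\|R_{a,k}\|_\infty\to0$, and take the forward part to be $\sum_{a\in F}(g(a)R_{a,k})^{1/m}$, so that its $m$-th power (the summands have disjoint supports, as distinct vertices have distinct $n_k$-ancestors) shifts under $B_\lambda^{n_k}$ to $g$, while the part itself tends to $0$ in $c_0(A)$. The correction part is the genuinely new ingredient: applying $B_\lambda^{n_k}$ to $f^m$ produces the leftover $\sum_{a\in F}f^m(a)\lambda(\Par^{n_k}(a)\to a)e_{\Par^{n_k}(a)}$, whose $F_1$-portion already vanishes. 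To kill the $F_2$-portion I group the $a\in F_2$ by their common ancestor $w=\Par^{n_k}(a)$, and on each $\Chi^{n_k}(w)\setminus F$ I seek coefficients $(c_u)$ with $\sum_u c_u^m\,\lambda(w\to u)=-S_{w,k}$, where $S_{w,k}=\sum_{a:\,\Par^{n_k}(a)=w}f^m(a)\lambda(w\to a)$, and $\max_u|c_u|\to0$. By \eqref{key-lemma} (applied with $\mu_u=1/\lambda(w\to u)$) the numbers $(t_u)=(c_u^m)$ can be chosen with $\max_u|t_u|$ arbitrarily close to $|S_{w,k}|\big(\sum_u|\lambda(w\to u)|\big)^{-1}$, and this quantity tends to $0$ precisely because the sum in \eqref{hyp:c0:2} diverges; taking $m$-th roots (harmless over $\CC$) gives $\max_u|c_u|\to0$. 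Since every $u\in\Chi^{n_k}(w)$ satisfies $\Par^{n_k}(u)=w$, each correction block shifts cleanly to a multiple of $e_w$ and cancels the corresponding leftover, so $B_\lambda^{n_k}h_k^m\to g$ and $h_k\to f$.

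The hard part will be this correction step, for three reasons: one must group the $F_2$-vertices sharing an $n_k$-th ancestor so that the cancellations are mutually consistent (the analogue of the sets $J_{(k,j,l)}$ in Theorem \ref{thm:necessary:unrooted}); one must keep the correction support inside $\Chi^{n_k}(w)\setminus F$ and disjoint from both $F$ and the forward support (automatic for large $n_k$, since these occupy different generations); and, most importantly, one must verify through \eqref{key-lemma} that the cancellation can be distributed over $\Chi^{n_k}(w)$ with vanishing $c_0$-norm, which is exactly what the divergence of the sum in \eqref{hyp:c0:2} guarantees. Once these points are settled, disjointness of supports renders the computation of $B_\lambda^{n_k}h_k^m$ purely term-by-term, and Proposition \ref{prop:hc:powers} delivers the residual set of $(ii)$.
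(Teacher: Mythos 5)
Your proposal is correct and follows essentially the same route as the paper: $(ii)\Rightarrow(i)$ trivially, $(i)\Leftrightarrow(iii)$ from \cite[Theorem 5.3]{Karl1}, and $(iii)\Rightarrow(ii)$ via Proposition \ref{prop:hc:powers} with the same splitting $F=F_1\cup F_2$, the same forward part from \eqref{key-lemma}, and a correction supported on $\Chi^{n_k}(\Par^{n_k}(v))\setminus F$ built again from \eqref{key-lemma} (the paper merely packages your coefficients $(c_u)$ as normalized right inverses $H_{v,k}$ scaled by $e^{i\pi/m}\bigl(\sum_{v\sim v_j}f(v)^m\lambda(\Par^M(v)\to v)/\lambda(\Par^M(v_j)\to v_j)\bigr)^{1/m}$, grouping the $F_2$-vertices by generation rather than by common $n_k$-ancestor, which amounts to the same thing for large $k$). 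The only nitpick is that $f$, not just $g$, must be taken with finite support in $F$ (possible by density) for your leftover formula $\sum_{a\in F}f^m(a)\lambda(\Par^{n_k}(a)\to a)e_{\Par^{n_k}(a)}$ and the disjoint-support splitting of $h_k^m$ to hold.
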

\begin{proof}
    Is is clear that $(ii)\Rightarrow (i)$. Moreover, by \cite[Theorem 5.3]{Karl1}, we have $(i)\Leftrightarrow (iii)$. Let us show that  $(iii)\Rightarrow (ii)$ by using Proposition \ref{prop:hc:powers}. Let  $U, V$ be non-empty open subsets of $c_0(A)$ and let $m \geq 1$. We fix $f\in U$ and $g \in V$ with support on some finite subset $F$ of $A$. 

    From the hypothesis, there is an increasing sequence of positive integers  $(n_k)_{k\geq1}$ such that \eqref{hyp:c0:1} and \eqref{hyp:c0:2} are satisfied for all $v\in A$.
    By extracting a subsequence if needed, we can write $F= F_{1} \cup F_{2}$, with $F_{1} \cap F_{2} = \varnothing$, such that 
     \begin{equation}\label{c0:unrooted:eq1}
      \sum_{u\in \Chi^{n_k}(v)}\big|\lambda(v\to u)\big|\xrightarrow{k\to+\infty}+\infty,\quad \forall v\in F,
    \end{equation}
    \begin{equation}\label{c0:F_j^1:unrooted:eq}
    \lambda\big(\Par^{n_{k}}(v)\to v\big)\xrightarrow{k\to+\infty}0, \quad \forall v\in F_{1},
\end{equation}
     and
\begin{equation}\label{c0:F_j^2:unrooted:eq}
     \sum_{u\in \Chi^{n_k}(\Par^{n_k}(v))\setminus F}\dfrac{\big|\lambda(\Par^{n_k}(v)\to u)\big|}{|\lambda(\Par^{n_k}(v)\to v)|}\xrightarrow{k\to+\infty} +\infty, \quad \forall v\in F_{2}.   
\end{equation}
For each $v\in F$ and $k\in \NN$, by applying \eqref{key-lemma} with $J=\Chi^{n_k}(v)$ and using \eqref{c0:unrooted:eq1},  we get the existence of $g_{v,k}:A\to \KK$ with finite support in $\Chi^{n_k}(v)$ satisfying
\begin{align}\label{Figue:eq1}
\|g_{v,k}\|_1=1\quad\text{and}\quad\sup_{u\in\Chi^{n_k}(v)}\frac{|g_{v,k}(u)|}{|\lambda(v\to u)|}\xrightarrow{k\to+\infty}0.
\end{align}
For each $v\in F$ and $k\in\NN$, we define 
\[R_{v,k} = \sum_{u\in\Chi^{n_k}(v)} \frac{|g_{v,k}(u)|}{\lambda(v\to u)}e_u\]
and we notice that $R_{v,k}\in c_0(A)$ and $B_{\lambda}^{n_k}R_{v,k}=e_v$. Also, by using \eqref{Figue:eq1} we get that $\|R_{v,k}^{1/m}\|_{\infty}\xrightarrow{k\to+\infty}0$.
Moreover, for each $v\in F_2$ and $k\in \NN$, by applying again  \eqref{key-lemma} with $J=\Chi^{n_k}(\Par^{n_k}(v))\setminus F$ and using \eqref{c0:F_j^2:unrooted:eq},  we get the existence of $h_{v,k}:A\to \KK$ with finite support in $\Chi^{n_k}(\Par^{n_k}(v))\setminus F$ satisfying
\begin{align}\label{Figue:eq2}
\|h_{v,k}\|_1=1\quad\text{and}\quad\sup_{u\in\Chi^{n_k}(\Par^{n_k}(v))}\dfrac{|h_{v,k}(u)|\,|\lambda(\Par^{n_k}(v)\to v)|}{\big|\lambda(\Par^{n_k}(v)\to u)\big|}\xrightarrow{k\to+\infty}0.
\end{align}
Let us define, for every $v\in F_2$
\[H_{v,k}=\sum_{u\in \Chi^{n_k}(\Par^{n_k}(v))\setminus F}\frac{\lambda(\Par^{n_k}(v)\to v)}{\lambda(\Par^{n_k}(v)\to u)}|h_{v,k}(u)| e_u.\]
It follows that $H_{v,k}\in c_0(A)$,  $B_{\lambda}^{n_k}H_{v,k}=\lambda(\Par^{n_k}(v)\to v) e_{\Par^{n_k}(v)}$ and $\|H_{v,k}\|_{\infty}\xrightarrow{k\to+\infty}0$.

We define a subset $G \subset F_2$ by picking exactly one vertex from $F_2$ per generation. We can assume that $G=\{v_1,\ldots,v_s\}$, for some $s\geq1$. Let $M\geq1$ be big enough so that, for any $j\in\{1,\dots,s\}$ and $v\in F_2$ such that $v\sim v_j$, we have $\Par^M(v_j)=\Par^M(v)$.
For $k$ big enough such that $\Chi^{n_k}(F)\cap F=\varnothing$, we define
\begin{align*}
  h_k &= f+\sum_{v\in F}g(v)^{1/m}R_{v,k}^{1/m}+e^{i\frac{\pi}{m}}\sum_{j=1}^{s}\Bigg(\sum_{ \underset{v\sim v_j}{v\in F_2}}f(v)^m\dfrac{\lambda(\Par^M(v)\to v)}{\lambda(\Par^M(v_j)\to v_j)}\Bigg)^{1/m} H_{v_j,k}^{1/m}.
\end{align*}
Note that $h_{k} \xrightarrow{k\to +\infty} f\in U$, thus $h_{k}\in U$ for all $k$ big enough. Moreover, for $k$ big enough such that $n_k>M$, we have
\begin{align*}
    B_{\lambda}^{n_k}h_{k}^{m}
    &=\sum_{v\in F}f^m(v)\lambda(\Par^{n_k}(v)\to v)e_{\Par^{n_k}(v)} + g \\
    &\quad \quad -\sum_{j=1}^{s}\!\Bigg(\!\sum_{ \underset{v\sim v_j}{v\in F_2}}f(v)^m\dfrac{\lambda(\Par^M(v)\to v)}{\lambda(\Par^M(v_j)\to v_j)}\!\Bigg)  \lambda(\Par^{n_k}(v_j)\to v_j) e_{\Par^{n_k}(v_j)}\\
    &=\sum_{v\in F}f^m(v)\lambda(\Par^{n_k}(v)\to v)e_{\Par^{n_k}(v)}+g-\sum_{ v\in F_2 }f(v)^m \lambda(\Par^{n_k}(v)\to v) e_{\Par^{n_k}(v)}   \\
    &=\sum_{v\in F_1}f^m(v)\lambda(\Par^{n_k}(v)\to v)e_{\Par^{n_k}(v)}+g.
\end{align*}
By using \eqref{c0:F_j^1:unrooted:eq}, we get $B_{\lambda}^{n_k}h_{k}^{m} \xrightarrow{k\to +\infty} g\in V$. Thus, $B_{\lambda}^{n_k}h_{k}^{m} \in V$ for all $k$ big enough. Therefore, the result follows from Proposition \ref{prop:hc:powers}.
\end{proof}

Just like for $\ell^p$-spaces, a stronger assumption as \eqref{eq:poss=good} on $c_0$-spaces allows us to adapt to the unrooted case what we managed to do on rooted trees.

\begin{theorem}\label{hcalg:c0:unrooted}
    Let $A$ be an unrooted directed and $\lambda=(\lambda_v)_{v\in A}$ be a weight such that its induced backward shift $B_\lambda:c_0(A)\to c_0(A)$ is bounded. Suppose that there is a sequence of non-negative integers $(n_k)_{k\geq1}$ such that, for every $v\in A$,
    \begin{equation*}
        \sum_{u\in \Chi^{n_k}(v)}|\lambda(v\to u)| \xrightarrow{k\to+\infty} +\infty.\quad\text{and}\quad \lambda(\Par^{n_k}(v)\to v)\xrightarrow{k\to+\infty}0.
    \end{equation*}
    Then $B_\lambda$ has a dense, countably generated, free hypercyclic algebra.
\end{theorem}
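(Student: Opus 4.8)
The plan is to verify the hypotheses of Theorem \ref{thm:generalcriterion}, combining the right-inverse construction based on \eqref{key-lemma} from the proof of Theorem \ref{hc_alg_c0_rooted} with the mechanism used in Theorem \ref{xico} to absorb the ``backward leakage'' produced by the unrooted structure. First I would fix \(d\geq1\), a finite set \(P\subset\NN_0^d\setminus\{(0,\dots,0)\}\), and non-empty open sets \(U_1,\dots,U_d,V,W\) with \(0\in W\); since finitely supported vectors are dense in \(c_0(A)\), I may choose \(f_1,\dots,f_d\) and \(g\) all supported on one finite set \(F\). Applying Lemma \ref{lemma:maps} yields \(s\in(0,+\infty)^d\) and \(\beta\in P\) with \(1=L_\beta(s)<L_\alpha(s)\) for every \(\alpha\in P\setminus\{\beta\}\), and after passing to a subsequence I may assume \(n_k\to+\infty\).

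Next I would build the right inverses exactly as in Theorem \ref{hc_alg_c0_rooted}: the first hypothesis \(\sum_{u\in\Chi^{n_k}(a)}|\lambda(a\to u)|\to+\infty\) together with \eqref{key-lemma} (applied with \(J=\Chi^{n_k}(a)\)) gives, for each \(a\in F\), a finitely supported \(g_{a,k}\) on \(\Chi^{n_k}(a)\) with \(\|g_{a,k}\|_1=1\) and \(\sup_{u}|g_{a,k}(u)|/|\lambda(a\to u)|\to0\). Setting \(R_{a,k}=\sum_{u\in\Chi^{n_k}(a)}\frac{|g_{a,k}(u)|}{\lambda(a\to u)}e_u\in c_0(A)\) produces a vector with \(B_\lambda^{n_k}R_{a,k}=e_a\) and \(\|R_{a,k}^{s_j}\|_\infty\to0\). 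I then define
\[h_{k,j}=f_j+\sum_{a\in F}g(a)^{s_j}R_{a,k}^{s_j},\qquad j=1,\dots,d.\]
Because \(\Chi^{n_k}(a)\cap\Chi^{n_k}(a')=\varnothing\) for \(a\neq a'\) (unique ancestors in a tree) and \(\Chi^{n_k}(F)\cap F=\varnothing\) for large \(k\), the summands have pairwise disjoint supports disjoint from \(F\); consistent branch choices then give \(h_k^\alpha=f_1^{\alpha_1}\cdots f_d^{\alpha_d}+\sum_{a\in F}g(a)^{L_\alpha(s)}R_{a,k}^{L_\alpha(s)}\), and the estimate \(\|h_{k,j}-f_j\|_\infty=\max_{a\in F}|g(a)|^{s_j}\|R_{a,k}^{s_j}\|_\infty\to0\) places \(h_{k,j}\) in \(U_j\) for large \(k\).

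The crux is the computation of \(B_\lambda^{n_k}h_k^\alpha\). Since each \(f_1^{\alpha_1}\cdots f_d^{\alpha_d}\) is supported in \(F\), applying \(B_\lambda^{n_k}\) yields a leakage term \(\sum_{a\in F}(f_1^{\alpha_1}\cdots f_d^{\alpha_d})(a)\,\lambda(\Par^{n_k}(a)\to a)\,e_{\Par^{n_k}(a)}\) plus \(\sum_{a\in F}g(a)^{L_\alpha(s)}B_\lambda^{n_k}R_{a,k}^{L_\alpha(s)}\). In the rooted case the leakage vanishes because \(\Par^{n_k}(a)\) ceases to exist for large \(n_k\); here, where all ancestors exist, this is exactly where the second hypothesis \(\lambda(\Par^{n_k}(v)\to v)\to0\) is needed, forcing the (finite) leakage sum to \(0\) in \(c_0(A)\). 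For \(\alpha=\beta\) the remaining term equals \(\sum_{a\in F}g(a)e_a=g\), so \(B_\lambda^{n_k}h_k^\beta\to g\in V\); for \(\alpha\neq\beta\), using \(L_\alpha(s)-1>0\), the same estimate as in Theorem \ref{hc_alg_c0_rooted},
\[\|B_\lambda^{n_k}R_{a,k}^{L_\alpha(s)}\|_\infty\le\Big(\sup_{u\in\Chi^{n_k}(a)}\tfrac{|g_{a,k}(u)|}{|\lambda(a\to u)|}\Big)^{L_\alpha(s)-1}\!\!\sum_{u\in\Chi^{n_k}(a)}|g_{a,k}(u)|\longrightarrow0,\]
drives \(B_\lambda^{n_k}h_k^\alpha\to0\in W\). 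Thus the criterion of Theorem \ref{thm:generalcriterion} applies for every \(d\), giving the dense, countably generated, free hypercyclic algebra. I expect the only genuinely delicate point to be the bookkeeping of the leakage term and the verification that the second hypothesis suffices to annihilate it; the right-inverse estimates and the disjoint-support algebra are routine adaptations of the rooted \(c_0\) proof.
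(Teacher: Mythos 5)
Your proposal is correct and follows essentially the same route as the paper's proof: the same application of Theorem \ref{thm:generalcriterion} and Lemma \ref{lemma:maps}, the same right inverses $R_{a,k}$ built from \eqref{key-lemma}, the same perturbations $h_{k,j}=f_j+\sum_{a\in F}g(a)^{s_j}R_{a,k}^{s_j}$, and the same use of the hypothesis $\lambda(\Par^{n_k}(a)\to a)\to 0$ to kill the leakage term $\sum_{a\in F}(f_1^{\alpha_1}\cdots f_d^{\alpha_d})(a)\,\lambda(\Par^{n_k}(a)\to a)\,e_{\Par^{n_k}(a)}$. The point you flag as delicate is indeed the only place the unrooted structure intervenes, and your treatment of it matches the paper's.
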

\begin{proof}
Let $d\geq 1$, let $P\subset \NN_0^d\backslash\{(0,...,0)\}$ finite and let $U_1,U_2,...,U_d,V,W\subset c_0(A)$ non-empty and open subsets with $0\in W$. We fix $(f_1,\dots, f_d)\in U_1\times \cdots\times U_d$ and $g\in V$ with support on some finite subset $F$ of $A$. Just like before, we apply Lemma \ref{lemma:maps} and we find $s\in (0,+\infty)^d$ and $\beta\in P$ such that the maps $L_\alpha:\RR^d\to\RR$ given $L_\alpha(s)=\sum_{j=1}^d\alpha_js_j$, for $s\in\RR^d$ and $\alpha\in P$, satisfy $1=L_\beta(s)<L_\alpha(s)$ for all $\alpha\in P\backslash\{\beta\}$.

From the hypothesis, there is $(n_k)_{k\geq1}$ an increasing sequence of positive entire numbers such that, for all $v\in A$,
\[\sum_{u\in \Chi^{n_k}(v)}\big|\lambda(v\to u)\big|\xrightarrow{k\to+\infty}+\infty.\]
For each $v\in A$ and $k\in \NN$, we apply \eqref{key-lemma} with $J=\Chi^{n_k}(v)$ and we get the existence of $g_{v,k}:A\to \KK$ with support in $\Chi^{n_k}(v)$ satisfying
\begin{align}\label{champignon-rep}
\|g_{v,k}\|_1=1\quad\text{and}\quad\sup_{u\in\Chi^{n_k}(v)}\frac{|g_{v,k}(u)|}{|\lambda(v\to u)|}\xrightarrow{k\to+\infty}0.
\end{align}
For each $v\in A$ and $k\in\NN$, we define \[R_{v,k} = \sum_{u\in\Chi^{n_k}(v)} \frac{|g_{v,k}(u)|}{\lambda(v\to u)}e_u\]
and we notice that $B_{\lambda}^{n_k}R_{v,k}=e_v$.
Hence, for $k\in\NN$ big enough so that 
$\Chi^{n_k}(F)\cap F=\varnothing $,  we set $h_k=(h_{k,1},\dots,h_{k,d})$, where  
\[h_{k,j}=f_j+\sum_{v\in F}g(v)^{s_j}R_{v,k}^{s_j}, \quad j=1,\dots,d,\]
and we get
\begin{align*}
    B_{\lambda}^{n_k}h_k^{\beta}= \sum_{v\in F} (f_1^{\beta_1}\cdots f_d^{\beta_d})(v) \lambda(\Par^{n_k}(v)\to v)e_{\Par^{n_k}(v)}+g\xrightarrow{k\to+\infty}g\in V.
\end{align*}
We also have, for each $v\in F$ and $j=1,\dots, d$,
\begin{align*}
    \|R_{v,k}^{s_j}\|_{\infty} 
    &= \bigg(\sup_{u\in \Chi^{n_k}(v)}\frac{|g_{v,k}(u)|}{|\lambda(v\to u)|}\bigg)^{s_j}\xrightarrow{k\to +\infty}0,
\end{align*}
where the last limit comes from \eqref{champignon-rep}. Thus, $h_{k,j}\in U_j$, for $j=1,\ldots,d$, if $k$ is big enough. Now, we consider $\alpha \in P\backslash\{\beta\}$ and $k\in\NN$, and we calculate
\begin{align*}
\|B_{\lambda}^{n_k}h_k^{\alpha}\|_{\infty}
&= \bigg\|\sum_{v\in F} (f_1^{\alpha_1}\cdots f_d^{\alpha_d})(v) \lambda(\Par^{n_k}(v)\to v)e_{\Par^{n_k}(v)}+\sum_{v\in F}g(v)^{L_\alpha(s)}B_\lambda^{n_k} R_{v,k}^{L_\alpha(s)}\bigg\|_{\infty}\\
&=\max_{v\in F}\bigg\{\big|(f_1^{\alpha_1}\cdots f_d^{\alpha_d})(v) \lambda(\Par^{n_k}(v)\to v)\big|,\bigg|\sum_{u\in \Chi^{n_k}(v)}g(v)^{L_\alpha(s)}\frac{|g_{v,k}(u)|^{L_\alpha(s)}}{\lambda(v\to u)^{L_\alpha(s)-1}}\bigg|\bigg\}\\
&\leq \max_{v\in F}\bigg\{\|f_1^{\alpha_1}\cdots f_d^{\alpha_d}\|_\infty|\lambda(\Par^{n_k}(v)\to v)|+\|g^{L_\alpha(s)}\|_\infty\sup_{u\in\Chi^{n_k}(v)}\bigg(\frac{|g_{v,k}(u)|}{|\lambda(v\to u)|}\bigg)^{L_\alpha(s)-1}\bigg\},\\
\end{align*}
where the last term vanishes as $k\to+\infty$ as a consequence of \eqref{champignon-rep} and the hypothesis. Therefore, if $k$ is big enough, we have $B_{\lambda}^{n_k}h_k^{\alpha}\in W$ for all $\alpha\in P\backslash\{\beta\}$. From Theorem \ref{thm:generalcriterion}, $B_\lambda$ has a dense, countably generated, free hypercyclic algebra on $c_0(A)$.
\end{proof}

In both cases of $c_0$-spaces and $\ell^1$-spaces, if the weight $\lambda$ satisfies \eqref{eq:poss=good} for the whole sequence $(n)_{n\in\NN}$, then we get the following equivalence.

\begin{corollary}
Let $A$ be an unrooted directed, let $X=\ell^1(A)$ or $X=c_0(A)$ and let $\lambda=(\lambda_v)_{v\in A}$ be a weight such that its induced backward shift $B_\lambda:X\to X$  is bounded. Suppose that, for all $v\in A$, 
\[\lim_{n\to+\infty} \lambda(\Par^n(v)\to v)=0.\] 
Then $B_\lambda$ is hypercyclic if and only if it has a hypercyclic algebra.
\end{corollary}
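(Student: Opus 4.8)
The plan is to derive the nontrivial implication directly from the two sufficient conditions already obtained, namely Theorem~\ref{xico} (in the case $p=1$) when $X=\ell^1(A)$ and Theorem~\ref{hcalg:c0:unrooted} when $X=c_0(A)$. The reverse implication is immediate: by definition a hypercyclic algebra is a subalgebra of $HC(B_\lambda)\cup\{0\}$, so its nonzero elements are hypercyclic vectors and $B_\lambda$ is hypercyclic.

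For the forward implication, suppose $B_\lambda$ is hypercyclic. I would first invoke the hypercyclicity characterization for unrooted trees. When $X=\ell^1(A)$, hypercyclicity is equivalent to condition $(v)$ of Theorem~\ref{thm:necessary:unrooted} (via \cite[Theorem 5.3]{Karl1}); when $X=c_0(A)$, it is equivalent to condition $(iii)$ of Theorem~\ref{thm:necessary:c0}. In either case one extracts an increasing sequence of positive integers $(n_k)_{k\geq1}$ for which the appropriate blow-up limit holds at every $v\in A$: namely $\sup_{u\in\Chi^{n_k}(v)}|\lambda(v\to u)|\to+\infty$ in the $\ell^1$-setting, and $\sum_{u\in\Chi^{n_k}(v)}|\lambda(v\to u)|\to+\infty$ in the $c_0$-setting. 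This is precisely the first of the two hypotheses demanded by Theorem~\ref{xico} (respectively Theorem~\ref{hcalg:c0:unrooted}).

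The key point is that the second required hypothesis, $\lambda(\Par^{n_k}(v)\to v)\to0$, comes for free from the standing assumption. Indeed, we are assuming $\lim_{n\to+\infty}\lambda(\Par^n(v)\to v)=0$ along the entire integer sequence, hence along every subsequence tending to infinity; since $(n_k)$ is increasing we have $n_k\to+\infty$, and therefore $\lambda(\Par^{n_k}(v)\to v)\to0$ for all $v\in A$. With both hypotheses now verified, Theorem~\ref{xico} or Theorem~\ref{hcalg:c0:unrooted} applies and produces a dense, countably generated, free hypercyclic algebra for $B_\lambda$, which in particular is a hypercyclic algebra.

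I do not anticipate a genuine obstacle: the argument is an assembly of results already in place, and the only step requiring a moment's attention is the transfer of the ``$\to0$'' condition from the full sequence $(n)_{n\in\NN}$ to the subsequence $(n_k)$ supplied by the hypercyclicity characterization, which is automatic. The remaining care is purely bookkeeping, confirming that the blow-up limit coming out of the hypercyclicity criterion is verbatim the one required as input by the sufficient-condition theorems, as indeed it is in both the $\ell^1$ and $c_0$ cases.
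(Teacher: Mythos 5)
Your argument is correct and is exactly the assembly the paper intends (the paper states this corollary without proof, introducing it with the remark that condition \eqref{eq:poss=good} holding along the whole sequence lets one feed the hypercyclicity characterization directly into Theorem \ref{xico} or Theorem \ref{hcalg:c0:unrooted}). The one point needing care --- transferring the decay condition from the full sequence to the subsequence $(n_k)$ produced by the hypercyclicity criterion --- is handled correctly in your write-up.
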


Let us now provide a combined corollary using our previous results. 
From \cite[Corollary 5.6(b)]{Karl1} it follows that, for trees with a free left and, condition $(v)$ in Theorem \ref{thm:necessary:unrooted} characterizes when $B_\lambda$ is hypercyclic on $\ell^1(A)$. Similarly, also from \cite[Corollary 5.6(b)]{Karl1} it follows that, for unrooted trees with a free left end, Theorem \ref{hcalg:c0:unrooted} characterizes when $B_\lambda$ is hypercyclic on $c_0(A)$. Therefore, we get immediately the following.

\begin{corollary}
Let $A$ be an unrooted directed tree with a free left end, let $X=\ell^1(A)$ or $X=c_0(A)$ and let $\lambda=(\lambda_v)_{v\in A}$ be a weight such that its induced backward shift $B_\lambda:X\to X$ is bounded. Then the following are equivalent.
\begin{enumerate}[$(i)$]
    \item $B_\lambda$ has a dense, countably generated, free hypercyclic algebra.
    \item $B_\lambda$ has a hypercyclic algebra.
    \item $B_\lambda$ is hypercyclic.
    \item There is an increasing sequence of positive integers $(n_k)_{k\geq1}$ such that, for every $v\in A$, 
    \[\lambda(\Par^{n_k}(v)\to v)\xrightarrow[]{k\to+\infty} 0\]
    and
    \begin{align*}
        \sup_{u\in \Chi^{n_k}(v)}|\lambda(v\to u)| \xrightarrow{k\to+\infty} +\infty, \text{ if } X&=\ell^1(A),\\
        \sum_{u\in \Chi^{n_k}(v)}|\lambda(v\to u)| \xrightarrow{k\to+\infty} +\infty, \text{ if } X&=c_0(A).
    \end{align*}
\end{enumerate}
\end{corollary}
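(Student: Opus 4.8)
The plan is to prove the chain $(i)\Rightarrow(ii)\Rightarrow(iii)\Rightarrow(iv)\Rightarrow(i)$, handling $X=\ell^1(A)$ and $X=c_0(A)$ in parallel. The first two implications are immediate: a dense, countably generated, free hypercyclic algebra is in particular a hypercyclic algebra, and any hypercyclic algebra is a nontrivial subalgebra of $HC(B_\lambda)\cup\{0\}$, hence contains a hypercyclic vector. The implication $(iv)\Rightarrow(i)$ also carries no new content: condition $(iv)$ is \emph{exactly} the hypothesis of Theorem~\ref{xico} when $X=\ell^1(A)$ (the case $p=1$) and \emph{exactly} the hypothesis of Theorem~\ref{hcalg:c0:unrooted} when $X=c_0(A)$, and each of those theorems already yields a dense, countably generated, free hypercyclic algebra. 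Thus the whole weight of the proof rests on $(iii)\Rightarrow(iv)$.

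For $(iii)\Rightarrow(iv)$ I would invoke the hypercyclicity characterization for free-left-end trees from \cite[Corollary 5.6(b)]{Karl1}. The $c_0$ case is the quicker one: that result states precisely that, on a tree with a free left end, $B_\lambda$ is hypercyclic on $c_0(A)$ if and only if there is a sequence $(n_k)$ for which, for every $v\in A$, both $\sum_{u\in\Chi^{n_k}(v)}|\lambda(v\to u)|\to+\infty$ and $\lambda(\Par^{n_k}(v)\to v)\to 0$ hold. This is verbatim condition $(iv)$, so $(iii)\Leftrightarrow(iv)$ with nothing further to check. In the $\ell^1$ case the same citation gives only that hypercyclicity is equivalent to condition $(v)$ of Theorem~\ref{thm:necessary:unrooted}, so it remains to verify that on a free-left-end tree condition $(v)$ collapses to condition $(iv)$.

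That collapse is the heart of the argument. The forward-growth requirement $\sup_{u\in\Chi^{n_k}(v)}|\lambda(v\to u)|\to+\infty$ appears in both conditions, so only the $\max$-term of $(v)$ needs attention. Here I would use the structural fact already recorded in the discussion around \eqref{eq:poss=good}: on a tree with a free left end the second entry of that maximum, the supremum of weight ratios in \eqref{eq:poss=bad}, stays bounded in $k$. Concretely, for $n_k$ large the ancestor $\Par^{n_k}(v)$ lies in a singleton generation through which the entire generation of $v$ funnels, so $\Chi^{n_k}(\Par^{n_k}(v))$ stabilizes to the fixed generation of $v$; for each competitor $u$ the paths from $\Par^{n_k}(v)$ to $u$ and to $v$ share their initial segment down to the least common ancestor of $u$ and $v$, and this common segment cancels in the quotient, leaving a value independent of $k$. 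Since for each fixed $k$ the supremum of numerators is at most $\|B_\lambda^{n_k}\|$, the whole ratio is bounded by $\|B_\lambda^{n_k}\|/|\lambda(\Par^{n_k}(v)\to v)|<+\infty$; being in addition $k$-independent for large $k$, it is a finite constant, hence bounded. Consequently the maximum in $(v)$ can tend to $+\infty$ only through its first entry, which forces $\lambda(\Par^{n_k}(v)\to v)\to 0$; together with the forward-growth condition this is exactly $(iv)$. The reverse passage $(iv)\Rightarrow(v)$ is trivial, since $\lambda(\Par^{n_k}(v)\to v)\to0$ makes $1/|\lambda(\Par^{n_k}(v)\to v)|\to+\infty$.

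I expect the only genuine obstacle to be this final reduction for $\ell^1$: one must be certain that the ratio term in \eqref{eq:poss=bad} is \emph{truly bounded} in $k$, not merely that \eqref{eq:poss=good} is one of two admissible alternatives. This is where the free-left-end hypothesis is indispensable and where the cancellation of the common ancestral path, combined with the fixed-$k$ finiteness supplied by boundedness of $B_\lambda^{n_k}$, does the work. Everything else is either immediate or a direct appeal to Theorems~\ref{xico} and~\ref{hcalg:c0:unrooted} and to the cited characterizations of hypercyclicity.
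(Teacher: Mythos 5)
Your proposal is correct and follows essentially the same route as the paper: the paper also deduces $(iii)\Leftrightarrow(iv)$ from \cite[Corollary 5.6(b)]{Karl1} together with the observation (made just before Corollary \ref{corol:new}) that on a free-left-end tree the quotients in \eqref{eq:poss=bad} are eventually constant in $k$, and then obtains $(iv)\Rightarrow(i)$ from Theorems \ref{xico} and \ref{hcalg:c0:unrooted}. The only difference is that you spell out the cancellation-of-the-common-ancestral-path argument and the finiteness bound via $\|B_\lambda^{n_k}\|$, details the paper leaves implicit.
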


As a simple example, we can prove that the hypercyclic bilateral generalization of the Rolewicz operator has a hypercyclic algebra whenever the tree does not have too many branches.

\begin{example}
 Let $A$ be an unrooted directed tree with a free left end (a particular case being when $A$ has a bounded number of branches) and let $X=c_0(A)$ or $X=\ell^p(A), 1\leq p<+\infty$. Fixing $\lambda$ with $|\lambda|>1$, define the weights $(\lambda_v)_v$ as follows. Fix $n', n''\in \ZZ$ and define
    \[\begin{cases}
        \lambda_v=\frac{1}{\lambda}&\text{for all }   v\in\Gen_k\text{ with }k<n',\\
        \lambda_v=\lambda&\text{for all }v\in\Gen_k \text{ with }k>n'',\\
        \lambda_v\neq 0 &\text{for all }v\in \Gen_k\text{ with }k\in[\![n',n'']\!].
    \end{cases}\]
    Then, if $B_\lambda:X\to X$ is bounded, it has a hypercyclic algebra.
\end{example}

\section{Existence of hypercyclic algebras on trees}\label{sec:existence}

It is known from \cite[Theorem 6.1]{Karl1} that any $\ell^p$-space, $1\leq p<+\infty$, and $c_0$-space of a leafless directed tree (rooted or unrooted) supports a mixing, as well as a hypercyclic non-mixing, weighted backward shift. These ideas can be easily adjusted to fit in the context of hypercyclic algebras. We state it here as a theorem, although this is more of a remark. We can prove the following.

\begin{theorem}
    Let $V$ be a leafless directed tree. Let $X=\ell^p(V)$, $1\leq p <\infty$, or $X=c_0(V)$.
    \begin{enumerate}[$(i)$]
        \item There exists a mixing weighted backward shift on $X$ that supports  a dense, countably generated, free hypercyclic algebra.
        \item There exists a non-mixing weighted backward shift on $X$ that supports a dense, countably generated, free hypercyclic algebra.
    \end{enumerate}
\end{theorem}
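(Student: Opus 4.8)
The plan is to revisit the existence constructions of \cite[Theorem 6.1]{Karl1}, which already furnish a mixing and a hypercyclic non-mixing weighted backward shift on any leafless tree, and to refine the choice of weights so that they additionally satisfy the algebrability criteria obtained in Sections \ref{sec:rooted} and \ref{sec:unrooted}. The guiding observation is that the \emph{supremum} growth condition $\sup_{u\in\Chi^{n_k}(v)}|\lambda(v\to u)|\to\infty$ of condition $(iv)$ in Theorem \ref{cara:lp:hc-alg-rooted} dominates every other growth quantity appearing in the hypercyclicity characterizations: it forces both $\sum_{u\in\Chi^{n_k}(v)}|\lambda(v\to u)|^{p^*}\to\infty$ and $\sum_{u\in\Chi^{n_k}(v)}|\lambda(v\to u)|\to\infty$. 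Consequently a single weight realizing this supremum condition (together with the decay $\lambda(\Par^{n_k}(v)\to v)\to 0$ in the unrooted case) supports a dense, countably generated, free hypercyclic algebra simultaneously on every $\ell^p(V)$, $1\le p<\infty$, and on $c_0(V)$, via Theorem \ref{cara:lp:hc-alg-rooted}, Corollary \ref{hc_alg_l1_rooted} and Theorem \ref{hc_alg_c0_rooted} when $V$ is rooted, and via Theorem \ref{xico} and Theorem \ref{hcalg:c0:unrooted} when $V$ is unrooted. Moreover, imposing these conditions along the \emph{full} sequence $(n)_n$ yields mixing (these being the full-sequence analogues of the hypercyclicity criteria in \cite{Karl1}), while imposing them only along a sparse subsequence yields a hypercyclic but non-mixing operator.

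For the concrete weights I would first fix, using leaflessness, a choice function $c\colon V\to V$ with $c(v)\in\Chi(v)$; calling an edge \emph{heavy} when it lands on $c(\Par(v))$, this singles out from every vertex an infinite heavy ray $v, c(v), c^2(v),\dots$. In the rooted case I would put weight $2$ on every heavy edge and summable, much smaller weights on the remaining children (for instance $2^{-i}$ on the $i$-th non-heavy child), so that by Proposition \ref{boundedness} $B_\lambda$ is bounded on all the spaces at once; the heavy ray then gives $\sup_{u\in\Chi^{n}(v)}|\lambda(v\to u)|\ge 2^{n}\to\infty$ along the full sequence, which is condition $(iv)$ and is simultaneously mixing. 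In the unrooted case the difficulty is that the very heavy edges producing downward growth also sit on the ancestor chains, so a constant heavy weight would make $\lambda(\Par^{n}(v)\to v)$ blow up instead of vanish. I would resolve this by letting the heavy weight depend on the generation: assign $e$ to heavy edges landing in generations $j\ge 1$ and $e^{-1}$ to those landing in generations $j\le 0$ (with off-heavy weights negligible). A direct computation then shows that, for each fixed $v$, only boundedly many ancestor factors exceed $1$ while the tail lies in the negative generations, whence $\lambda(\Par^{n}(v)\to v)\to 0$; symmetrically, the downward heavy product tends to $+\infty$. Both limits hold along the full sequence, giving a bounded mixing shift satisfying the hypotheses of Theorems \ref{xico} and \ref{hcalg:c0:unrooted}, which proves $(i)$.

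For part $(ii)$ I would keep the heavy-ray skeleton but replace the monotone generation profile by an oscillating one: along the heavy direction, alternate long blocks of weight $e$ (growth zones) with long blocks of weight $e^{-1}$ (kill zones) whose lengths tend to infinity, a tree analogue of the classical profiles producing non-mixing hypercyclic bilateral shifts. The kill zones force the heavy products to return to bounded values infinitely often, so the full-sequence growth fails and the operator is not mixing, while the ends of the widening growth zones provide an increasing sequence $(n_k)$ along which the supremum growth and the decay both hold. I expect the main obstacle to be exactly this last coordination in the unrooted case: one must exhibit a \emph{single} subsequence $(n_k)$, independent of the vertex, for which $\sup_{u\in\Chi^{n_k}(v)}|\lambda(v\to u)|\to\infty$ and $\lambda(\Par^{n_k}(v)\to v)\to 0$ for every $v$ while mixing is simultaneously destroyed. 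Since the algebra criteria only require these limits to hold as $k\to\infty$ for each fixed $v$ separately, this can be arranged by making the block lengths grow fast enough that, for every fixed vertex, the shifted windows of length $n_k$ eventually fall deep inside a growth zone; once this placement is verified, Theorems \ref{xico} and \ref{hcalg:c0:unrooted} together with their rooted counterparts deliver the dense, countably generated, free hypercyclic algebra, completing $(ii)$.
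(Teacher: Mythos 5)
Your proposal is correct and follows essentially the same route as the paper: the paper likewise reduces everything to the sufficient criteria of Theorem \ref{cara:lp:hc-alg-rooted}, Corollary \ref{hc_alg_l1_rooted}, Theorem \ref{hc_alg_c0_rooted} (rooted case) and Theorems \ref{xico} and \ref{hcalg:c0:unrooted} (unrooted case), exploiting exactly your observation that the supremum growth together with the decay $\lambda(\Par^{n_k}(v)\to v)\to 0$ covers all the spaces at once. The only difference is cosmetic: rather than building fresh ``heavy-ray'' weights, the paper simply takes the exact mixing and non-mixing weights already constructed in \cite[Theorem 6.1]{Karl1} and checks that they satisfy these hypotheses.
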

\begin{proof}
Note that, when $V$ is a rooted directed tree and $X=\ell^1(V)$ or $c_0(V)$, then it is enough to apply  Corollary \ref{hc_alg_l1_rooted}  or Theorem \ref{hc_alg_c0_rooted} combined with \cite[Theorem 6.1]{Karl1}.

We will prove the theorem for the space $X=\ell^p(V)$, where $1 < p < \infty$ and $V$ is an unrooted leafless directed tree. The  cases of $\ell^1(V)$ or $c_0(V)$ are simpler. The same technique works for rooted trees as well, we can obtain this by repeating the same steps for the $n$th generations, with $n \in \mathbb{N}_0$. We fix $v_0\in V$ and we enumerate the generations in $A$ with respect to $v_0$.

$(i).$ Define the exact same mixing weighted backward shift $B_\lambda$ as in \cite[Theorem 6.1 (a)]{Karl1}. Then it is easy to check that $B_\lambda$ supports a dense, countably generated, free hypercyclic algebra, for it satisfies the hypothesis of Theorem \ref{xico}. 
    
$(ii)$. Once more, we consider the same non-mixing weighted backward shift $B_\lambda$ as in \cite[Theorem 6.1 (b)]{Karl1}. It is not difficult to verify that it satisfies the hypothesis of Theorem \ref{xico}, which implies that it has a dense, countably generated, free hypercyclic algebra.
\end{proof}

In the following, we will address the question of the existence of a mixing backward shift on rooted directed trees that does not support a hypercyclic algebra. Note that, in the case of $\ell^1$-spaces or $c_0$-spaces, any mixing backward shift on a rooted directed tree supports a hypercyclic algebra.  We know from the examples we have that this is not true for $\ell^p$-spaces, with $p > 1$.

Consider $A$ to be a rooted directed tree. We aim to discuss the possibility of finding a weighted backward shift $B_\lambda$ on $\ell^p(A)$, with $p>1$, that is mixing but does not have a hypercyclic algebra. Our weight should satisfies the following conditions:
\begin{align}
&\sup_{v\in A}\sum_{u\in \Chi(v)}|\lambda_u|^{p^\ast}<\infty, \quad \text{(continuity)}\label{equ13}\\   
& \sum_{u\in \Chi^n(v)}|\lambda(v\to u)|^{p^\ast}\xrightarrow{n\to+\infty}+\infty, \ \text{for all } v\in A,\label{equ14} \quad \text{(mixing)}\\
&\limsup_{n\to+\infty}\!\sup_{u\in\Chi^n(v_0)}\!|\lambda(v_0\to u)|<+\infty, \text{ for some } v_0\in A\label{equ15} \quad \text{(no hypercyclic algebra).}
\end{align} 

Note that, if there exists a given $v_0\in A$ for which the conditions \eqref{equ14} and \eqref{equ15} hold, then
\begin{equation}\label{jambo}
|\Chi^n(v_0)|\xrightarrow{n\to+\infty}+\infty.
\end{equation}
Indeed, otherwise, there would exist $M>0$ such that $|\Chi^n(v_0)|<M$ for all $n\in\NN_0$. Therefore,
\[\sum_{u\in \Chi^n(v_0)}|\lambda(v_0\to u)|^{p^\ast}\leq M \sup_{u\in\Chi^n(v_0)}\,|\lambda(v_0\to u)|^{p^\ast},\]
by using \eqref{equ14}, we obtain
\[\sup_{u\in\Chi^n(v_0)}\,|\lambda(v_0\to u)|\xrightarrow{n\to+\infty}+\infty,\]
which contradicts \eqref{equ15}. 

We can now turn the previous discussion into an answer to the problem of finding a mixing backward shift with no hypercyclic algebra. We say that a vertex $v_0\in A$ is \emph{fertile} when $|\Chi^n(v)|\to+\infty$ as $n\to+\infty$ for all $v\in \bigcup_{n\in\NN}\Chi^n(v_0)$. Then we have the following.
\begin{theorem}\label{thm:fertile}
    Let $A$ be a rooted directed tree. Then $A$ admits a mixing weighted backward shift $B_\lambda:\ell^p(A)\to \ell^p(A)$, $1<p<\infty$, with no hypercyclic algebra if, and only if, $A$ has a fertile vertex.
\end{theorem}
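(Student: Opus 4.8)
The plan is to prove the two implications separately, treating necessity by contraposition (which is short) and devoting the bulk of the work to the weight construction needed for sufficiency. Throughout I use that, since $A$ is leafless, the map $n\mapsto |\Chi^n(w)|$ is non-decreasing for every $w\in A$; hence $|\Chi^n(w)|\to+\infty$ exactly when it is unbounded, and otherwise $|\Chi^n(w)|$ is eventually equal to a finite constant $C_w$ (the subtree rooted at $w$ being eventually a finite union of rays). Write $s_n(v):=\sup_{u\in\Chi^n(v)}|\lambda(v\to u)|$.

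For necessity I argue the contrapositive: if $A$ has \emph{no} fertile vertex, then every mixing $B_\lambda$ supports a hypercyclic algebra, so in particular no mixing $B_\lambda$ can fail to have one. Let $B_\lambda$ be mixing, so \eqref{equ14} holds for every $v$ along the full sequence. Fix $v\in A$. Since $v$ is not fertile it has a descendant $w\in\Chi^d(v)$ (possibly $v$ itself) with $|\Chi^n(w)|\le C_w<+\infty$. Applying \eqref{equ14} at $w$ and bounding $\sum_{u\in\Chi^n(w)}|\lambda(w\to u)|^{p^{*}}\le C_w\,s_n(w)^{p^{*}}$ forces $s_n(w)\to+\infty$ along the full sequence. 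Using $\lambda(v\to u)=\lambda(v\to w)\lambda(w\to u)$ for $u$ below $w$, together with $\lambda(v\to w)\neq 0$, gives $s_{n+d}(v)\ge |\lambda(v\to w)|\,s_n(w)\to+\infty$, hence $s_m(v)\to+\infty$ along the full sequence. As $v$ is arbitrary, condition $(iv)$ of Theorem~\ref{cara:lp:hc-alg-rooted} holds with $n_k=k$, so $B_\lambda$ supports a hypercyclic algebra.

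For sufficiency, let $v_0$ be fertile and let $T$ be the set of descendants of $v_0$ (including $v_0$); fertility means $|\Chi^n(v)|\to+\infty$ for every $v\in T$, so in particular no subtree of $T$ is eventually a finite union of rays. On the edges strictly inside $T$ I prescribe the partial products $\mu_u:=|\lambda(v_0\to u)|^{p^{*}}\in(0,1]$ by a top-down rule: set $\mu_{v_0}=1$; at each $u\in T$ mark one child as an \emph{heir} (two children, if $u$ has at least two), give every heir the mass $\mu_u$, and give the remaining children a summable geometric tail of total mass at most $\mu_u$. Then every $\mu_u\le 1$, so $s_n(v_0)\le 1$ for all $n$, which is exactly \eqref{equ15}; and $\sum_{u'\in\Chi(u)}|\lambda_{u'}|^{p^{*}}=\sum_{u'\in\Chi(u)}\mu_{u'}/\mu_u\le 3$ for each $u\in T$. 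The heir-descendants of any $v\in T$ form a subtree that branches precisely where $A$ branches, carries the constant mass $\mu_v$ on all its vertices, and has width tending to $+\infty$ (were it bounded it would eventually consist of finitely many rays, and the vertices of $A$ starting those rays would have $|\Chi^n(\cdot)|$ bounded, contradicting fertility). Consequently
\[
\sum_{u\in\Chi^n(v)}|\lambda(v\to u)|^{p^{*}}=\frac{1}{\mu_v}\sum_{u\in\Chi^n(v)}\mu_u\ \ge\ \#\{\text{heir-descendants of }v\text{ at level }n\}\xrightarrow{\ n\to+\infty\ }+\infty,
\]
so \eqref{equ14} holds at every $v\in T$.

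To define the weights off $T$, start from any mixing weight $\widetilde\lambda$ on all of $A$, furnished by \cite[Theorem~6.1]{Karl1}, and override it by the rule above on the edges strictly inside $T$. No vertex ever mixes the two prescriptions: the children of a vertex lie either all inside $T$ or all outside it, so \eqref{equ13} follows by taking the maximum of the bound $3$ and the finite continuity bound of $\widetilde\lambda$. Mixing at a side vertex $v$ (one whose subtree avoids $v_0$) is inherited from $\widetilde\lambda$, since all edges below such a $v$ are untouched; mixing at an ancestor $a$ of $v_0$ is automatic, because the descendants of $a$ passing through $v_0$ already contribute $|\lambda(a\to v_0)|^{p^{*}}\sum_{u\in\Chi^{m}(v_0)}\mu_u\to+\infty$. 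Thus $B_\lambda$ is mixing, while \eqref{equ15} makes condition $(iv)$ of Theorem~\ref{cara:lp:hc-alg-rooted} fail at $v_0$, so $B_\lambda$ has no hypercyclic algebra. The crux, and the main obstacle, is this construction on $T$: one must force every generation-sum $\sum_{u\in\Chi^n(v)}|\lambda(v\to u)|^{p^{*}}$ to diverge while keeping all partial products $|\lambda(v_0\to u)|$ bounded, uniformly over vertices of arbitrary (even infinite) out-degree, and then splice this with an ambient mixing weight without violating \eqref{equ13}; the heir-subtree device is what reconciles divergence of the sums with boundedness of the products, fertility guaranteeing the persistent branching that makes the bounded per-vertex mass accumulate.
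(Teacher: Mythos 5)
Your proof is correct and follows essentially the same route as the paper: the necessity direction is the identical contrapositive argument via condition $(iv)$ of Theorem \ref{cara:lp:hc-alg-rooted}, and your ``heir'' construction on the fertile subtree is exactly the paper's device of giving two children per branching vertex the full unit mass and a summable tail to the remaining children, so that fertility forces the generation sums to diverge while all partial products stay at most $1$. The only minor (and arguably cleaner) deviation is that you obtain mixing at the ancestors of $v_0$ directly from the divergent sums passing through $v_0$, whereas the paper plants extra branches with weights eventually equal to $2$ outside the fertile subtree, following \cite[Theorem 6.1(a)]{Karl1}.
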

\begin{proof}
    Suppose there exist a fertile vertex $v_0\in A$. We consider the sub-tree $B=\bigcup_{n\in \NN_0} \Chi^n(v_0)$ starting at $v_0$. We shall define $(\lambda_v)_{v\in A}$ first for $v\in B$, then for $v\in A\backslash B$. For $v\in B$, we define $\lambda_v\leq 1$ in a way that $B_\lambda :\ell^p(B)\to \ell^p(B)$ is continuous and mixing. To do this, let us start with $\lambda_{v_0}=1$. Now, if $\Chi(v_0)$ is infinite, we enumerate it as $\Chi(v_0)=\{u_l\}_{l\in\NN}$ and we define $\lambda_{u_1}=\lambda_{u_2}=1$ and $\lambda_{u_l}=\big(\frac{1}{2^{l-1}}\big)^{1/p\ast}$ for all $l\geq 2$. If $\Chi(v_0)$ has $m\geq 2$ elements, say $\Chi(v_0)=\{u_1,\ldots, u_m\}$, then we choose $\lambda_{u_1}=\lambda_{u_2}=1$ and $\lambda_{u_k}=\big(\frac{1}{m-2}\big)^{1/p\ast}$ for all $k=3,\dots, m$. If $\Chi(v_0)$ is unitary, then we take $\lambda_{u}=1$ for the only $u\in\Chi(v_0)$. Assuming $\lambda_v$ was defined for all $v\in\Chi^l(v_0)$, $l=1,\dots, k$, we apply the same definitions for all vertices $u\in \Chi^{k+1}(v_0)=\bigcup_{v\in \Chi^k(v_0)} \Chi(v)$. Then $\sup_{v\in V}\sum_{u\in \Chi(v)}|\lambda_u|^{p^\ast}\leq 3<\infty$, thus $B_\lambda$ is continuous. Also, for all $v\in B$, since $|\Chi^n(v)|\to+\infty$ as $n\to+\infty$, we know that $\lambda_{u_1}=\lambda_{u_2}=1$ happens infinitely many times in the definition of $(\lambda_v)_{v\in B}$ for vertices inside $\bigcup_{n\in\NN} \Chi^n(v)$, thus there are an infinite number of branches starting from $v$ and going to infinity such that all weights are 1, and this implies \eqref{equ14}. Notice also that \eqref{equ15} holds because $0<\lambda_v\leq 1$ for all $v\in B$.
    
    We now concentrate on $\lambda_v$ for $v\in A\backslash B$. If $\root=v_0$ then we are done, for in this case $A=B$ and $B_\lambda$ is the weighted backward shift defined above. If $\root\neq v_0$, we define $\lambda_v$ for $v\in A\backslash B$ following the same ideas as in \cite[Theorem 6.1(a)]{Karl1} to ensure that $B_\lambda$ is continuous and there are always infinite branches on which the weights are eventually all 2 (when $\Par(v_0)$ has at least two children), what ultimately implies \eqref{equ14}.

    Now, suppose that $A$ has no fertile vertex. Let $\lambda\in\KK^A$ be a weight such that $B_\lambda:\ell^p(A)\to\ell^p(A)$ is continuous and mixing. Then \eqref{equ14} holds for $\lambda$. For all $v_0\in A$, since $v_0$ is not fertile, there must exist $v\in A$ which has $v_0$ as ancestor and satisfies $|\Chi^n(v)|=1$ for all $n\in\NN$. Applying \eqref{equ14} for $v$ we conclude that $\sup_{u\in\Chi^n(v)}|\lambda(v\to u)|\to+\infty$ an $n\to+\infty$, and this implies $\sup_{u\in\Chi^n(v_0)}|\lambda(v_0\to u)|\to+\infty$. From Theorem \ref{cara:lp:hc-alg-rooted} we conclude that $B_\lambda$ has a hypercyclic algebra.
\end{proof}

Since not always mixing implies the existence of a hypercyclic algebra for shift on trees, one could think that having many branches would allow the construction of a counterexample. However, the existence of a fertile vertex is much more stronger than the mere existence of fast growing generations.

\begin{example}
    Let us define a rooted directed tree $A$ with ``many'' branches (that is, $\big(|\Chi^n(r)|\big)_n$ is an exponentially growing sequence) in which every mixing backward shift $B_\lambda:\ell^p(A)\to\ell^p(A)$ support a hypercyclic algebra. Let $\root$ be the root of $A$. We assign one child to $\root$: $\Chi(\root)=\{\root_1\}$. Next, we give $\root_1$ two children, $\Chi(\root_1)=\{s_1, \root_2\}$. From $s_1$ we generate a ``stationary'' branch: $\Chi(s_1)=\{s_{1,1}\}, \Chi(s_{1,1})=\{s_{1,2}\}$ and so on, that is, $\Chi(s_{1,n})=\{s_{1,n+1}\},$ for all $n\in\NN$. Next, we assign four children to $\root_2$: $\Chi(\root_2)=\{s_2^{(1)}, s_2^{(2)}, s_2^{(3)}, \root_3\}$. Each vertex $s_2^{(1)}, s_2^{(2)}, s_2^{(3)}$ generates its own stationary branch, while $\root_3$ is assigned a total of $2^3 = 8$ children. By continuing this process \emph{ad infinitum}, we obtain a directed tree in which no vertex is fertile (see the representation in Figure \ref{fig:ex-no-fertile}). Therefore, the conclusion follows from Theorem \ref{thm:fertile}.
\end{example}
\begin{figure}[H]
    \centering
    \includegraphics[width=0.8\linewidth]{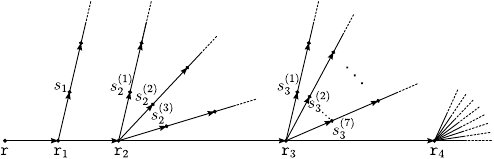}
    \caption{Tree with no fertile vertex}
    \label{fig:ex-no-fertile}
\end{figure}

One last comment on mixing shifts with no hypercyclic algebra. In \cite[Example 4.8]{Karl1}, the authors define a big class of mixing weighted backward shifts $B_{\lambda, q}$ acting $\ell^2(A)$, where $A$ is a rooted tree with finitely many children for each vertex and $q>1$. One can verify that $B_{\lambda,q}$ has no hypercyclic algebra as soon as the tree $A$ satisfies
\[\limsup_{n\to+\infty}\sup_{u\in\Chi^n(\root)}\frac{n^{q-1}}{\prod_{k=1}^n|\Chi(\Par^k(u))|}<+\infty\]
(in particular, for $q=2$ the shift $B_{\lambda,2}$ is called the \emph{Bergman shift}). This example constitute a large class of mixing shifts with no hypercyclic algebra.

\section{Concluding remarks and open questions}\label{sec:conc}

In this paper, we have explored the existence of hypercyclic algebras for backward shifts on sequence spaces of directed trees, equipped with the coordinatewise product. Our investigation has led to several key findings, including characterizations for shifts on rooted trees. The study of convolution products is left open for exploration and can lead to interesting results on $\ell^1$-spaces of a tree. Whereas the coordinatewise product is naturally defined on sequence spaces of any directed tree, there are multiple ways of generalizing the convolution product depending on the geometry of the tree in question. Thus, the mere consideration of a product with interesting applications is a compelling question.
\begin{question}
    What definition of convolution product in the sequence space of a directed tree would provide good applications concerning hypercyclic algebras?
\end{question}
An example of good application would be to obtain the equivalence between hypercyclicity and the existence of a hypercyclic algebra, since this is something that happens for weighted shifts on $\ell^1(\NN)$ or $\omega$ with the convolution product (see \cite[Theorem 1.3]{BCP} and \cite[Corollary 3.9]{karl-falco}). It would be also interesting to have a negative result here: to find a generalization of the convolution product such that this equivalence is not true anymore. Of course, a correct generalization would give the classical product when the tree is $A=\NN$.

In the unrooted case, however, the study of coordinatewise product is not fully completed. Is it not clear if the necessary conditions in Theorem \ref{thm:necessary:unrooted} are also sufficient. Thus, we are left with the following question.
\begin{question}
    Does (v)$\Rightarrow$(i) or (ii) in Theorem \ref{thm:necessary:unrooted}?
\end{question}
This result is our current best candidate for a characterization, although the technique present limitations when the condition $\lambda(\Par^{n_k}(v)\to v)\xrightarrow{k\to\infty}0$ doesn't happen. When it comes to the existence of hypercyclic algebras for certain classes of operators, on the other hand, 
Corollary \ref{corol:rolewicz} constitutes a satisfying partial answer for Rolewicz operators. Also, Corollary \ref{corol:symmetric:carac} is a pleasing characterization in the symmetric case.

Similar questions arise naturally for $c_0$-spaces of an unrooted tree. For instance, it is not clear if one can use Theorem \ref{thm:necessary:c0} to get an example of a weight $\lambda$ and an unrooted tree $A$ such that $B_\lambda$ is hypercyclic on $c_0(A)$ but has no hypercyclic algebra. Of course it could be the case that $B_\lambda$ has a hypercyclic algebra as soon as it is hypercyclic on $c_0(A)$, but it could also happen that Theorem \ref{hcalg:c0:unrooted} characterizes weighted shifts supporting hypercyclic algebras on $c_0(A)$.

\begin{question}
    Does condition (iii) in Theorem \ref{thm:necessary:c0} imply the existence of a hypercyclic algebra? Are the sufficient conditions in Theorem \ref{hcalg:c0:unrooted} also necessary for the existence of a hypercyclic algebra?
\end{question}

Still on Theorem \ref{thm:necessary:unrooted}, the current statement of (ii) or (iii) relates to a deeper question on the theory of hypercyclic algebras: the presence of dense sets of generators. It is a classical manipulation technique to write the set of hypercyclic vectors for an operator as the countable intersection of dense open sets, which implies that it is either empty or residual. Is it not clear if something similar could be done for hypercyclic algebras.

\begin{question}
    Let $T:X\to X$ be a continuous linear map acting on an $F$-space $X$. Suppose that $T$ supports a hypercyclic algebra. Is it true that the set of elements $x\in X$ generating a hypercyclic algebra for $X$ is residual? Is it at least dense?
\end{question}

Concerning the existence of mixing shifts with no hypercyclic algebras, we have solved the problem for the rooted case on $\ell^p$-spaces and $c_0$-spaces: on $\ell^1$ and $c_0$-spaces we know that hypercyclicity implies the existence of a hypercyclic algebra, whereas on $\ell^p$-spaces with $1<p<+\infty$, the tree must have a fertile vertex. However, the question remains open in the unrooted case.

\begin{question} 
    Can we characterize the unrooted directed trees $A$ supporting a mixing backward shift $B_\lambda:X\to X$ with no hypercyclic algebra, with $X=\ell^p(A), 1\leq p<+\infty$, or $X=c_0(A)$?
\end{question}
For $\ell^p$-spaces, with $1<p<\infty$, one can adapt the proof of Theorem \ref{thm:fertile} in order to construct a mixing backward shift with no hypercyclic algebra as soon as the tree has a fertile vertex. The inverse implication, as well as the case of $\ell^1$ and $c_0$-spaces are open.

A last open problem would be to extend the study developed in this article to locally convex spaces, in the spirit of \cite[Section 10]{Karl2}. Indeed, the theory of hypercyclic algebras has been developed in the context of Fréchet spaces (and even $F$-spaces), thus is it natural to expand our results in this direction. We believe that our techniques can provide complete characterization in the case of rooted trees. It would be interesting to study the gap that we mentioned in our study between the rooted and the unrooted cases.

\section*{Acknowledgments}
This work was partially supported by the INSMI PEPS JCJC research grant. The second author was also supported by CNPq grant 406457/2023-9.

\bibliographystyle{abbrv}
\bibliography{hc-alg-trees.bib}

\end{document}